%This is Latex
\documentclass[10pt,a4paper]{amsart}
\usepackage{latexsym}
\usepackage{amssymb} 
\usepackage{enumerate} 
\usepackage{ifpdf}
\usepackage{enumitem} 
 
\usepackage{verbatim} 
\usepackage{mathrsfs} 
\usepackage{multirow} 
%\usepackage{srcltx} 
% \ifpdf 
%\usepackage[pdftex]{graphicx} 
%\pdfcompresslevel=9 
%\DeclareGraphicsExtensions{.pdf} 
%\usepackage{hyperref} 
% \else 
%\usepackage{graphicx} 
%\DeclareGraphicsExtensions{.eps} 
%\usepackage{hyperref} 
% \fi 
\usepackage{graphicx} 
\usepackage{hyperref} 
 
\usepackage{tikz} 
\usetikzlibrary{matrix,arrows}

\newcommand\rightmap[1]{\smash{\mathop{\ \rightarrow\ }\limits^{#1}}} 
\newcommand\longrightmap[1]{\overset{#1}{\longrightarrow}}

\newcommand{\cC}{\mathcal C} 
\newcommand{\cX}{\mathcal X} 
\newcommand{\cA}{\mathcal A}

\newcommand{\cH}{\mathcal H}

\newcommand{\cO}{\mathcal O}

\newcommand{\cR}{\mathcal R} 
\newcommand{\cQ}{\mathcal Q}
\newcommand{\sV}{\mathring{V}}

\newcommand{\e}{\varepsilon}

\newcommand{\Alb}{\text{\rm Alb}} 
\newcommand{\Mor}{\text{\rm Mor}} 
 
\newcommand{\Gr}{\text{\rm Gr}} 
\newcommand{\Hom}{\text{\rm Hom}} 
\newcommand{\orb}{\text{\rm orb}}

\newcommand{\red}{\text{\rm red}} 
\newcommand{\coker}{\text{\rm coker}} 
\newcommand{\im}{\text{\rm Im}} 
\DeclareMathOperator{\Spec}{Spec} 
\DeclareMathOperator{\ab}{ab} 
\DeclareMathOperator{\Char}{Char} 
\DeclareMathOperator{\rk}{rank} 
\DeclareMathOperator{\lk}{lk} 
\DeclareMathOperator{\res}{Res}

\newcommand{\ZZ}{\mathbb Z} 
\newcommand{\CC}{\mathbb C}

\newcommand{\QQ}{\mathbb Q} 
\newcommand{\PP}{\mathbb P} 
 
\newcommand{\GG}{\mathbb G} 
\newcommand{\KK}{\mathbb K}

\def\cal{\mathcal} 
\def\SC{{Special Ceva}}
\newcommand{\Ceva}{\text{\rm CEVA}}
 
\newtheorem{theorem}{Theorem}[section] 
\newtheorem{lemma}[theorem]{Lemma} 
\newtheorem{prop}[theorem]{Proposition} 
\newtheorem{cor}[theorem]{Corollary} 
\theoremstyle{definition} 
\newtheorem{dfn}[theorem]{Definition}

\newtheorem{example}[theorem]{Example} 
 
\theoremstyle{remark} 
\newtheorem{remark}[theorem]{Remark} 
 
\numberwithin{equation}{section} 
 
\newcommand\enet[1]{\renewcommand\theenumi{#1} 
\renewcommand\labelenumi{\theenumi}}

\begin{document} 
 
\title[Depth of cohomology support loci...] 
{Depth of cohomology support loci for quasi-projective varieties via orbifold pencils} 
\author{E. Artal Bartolo, J.I.~Cogolludo-Agust{\'\i}n and A.~Libgober} 

\address{Departamento de Matem\'aticas, IUMA\\ 
Universidad de Zaragoza\\ 
C.~Pedro Cerbuna 12\\ 
50009 Zaragoza, Spain} 
\email{artal@unizar.es,jicogo@unizar.es}
 
\address{ 
Department of Mathematics\\ 
University of Illinois\\ 
851 S.~Morgan Str.\\ 
Chicago, IL 60607} 
\email{libgober@math.uic.edu} 
 
\thanks{Partially supported by the Spanish Ministry of
Education MTM2010-21740-C02-02. 
The third author was partially supported by NSF grant.} 
 
%\date{August 12, 2010} 

\subjclass[2000]{14H30, 14J30, 14H50, 11G05, 57M12, 14H52} 
 
\begin{abstract} The present paper describes a relation between 
the quotient of the fundamental group of a smooth quasi-projective variety 
by its second commutator and the existence of maps to orbifold curves. 
It extends previously studied cases when the target was a smooth curve. 
In the case when the quasi-projective variety is a complement
to a plane algebraic curve this provides new relations between 
the fundamental group, the equation of the curve, and the existence of 
polynomial solutions to certain equations generalizing Pell's equation.
These relations are formulated in terms of the depth which 
is an invariant of the characters of the 
fundamental group discussed in detail here.
\end{abstract}

\maketitle

%\tableofcontents

\section{Introduction}

Let $\cal X$ be a smooth quasi-projective variety and let $\chi\! \in\! \Hom(\pi_1(\cal X),\!\CC^*)$ 
be a character of its fundamental group. Viewing $\chi$ as a rank one local system, one associates 
to it the twisted cohomology groups. The purpose of this note is to extend known relations between 
holomorphic maps of $\cal X$ onto curves, i.e. holomorphic pencils, and dimensions of the twisted 
cohomology $H^1(\cal X,\chi).$

The problem of the existence of holomorphic pencils can be traced back to almost one hundred years and in its
projective version\footnote{and where local systems are replaced by holomorphic bundles.} goes 
back to Castelnuovo, deFrancis, Catanese, Green-Lazarsfeld, and Simpson (cf.~\cite{manuscripta} 
for a list of references). The quasi-projective case was considered in~\cite{arapura}, where the 
structure of the jumping subsets of the variety of characters 
\begin{equation}\label{jump}
\sV_k(\cX):=\{\chi\in\Hom(\pi_1(\cal X),\CC^*) \mid \dim H^1(\cal X,\chi) = k \} 
\end{equation}
was studied together with its relation to pencils. In this context, if $\chi\in \sV_k(\cX)$ 
we say $\chi$ has \emph{depth}~$k$. 
The characteristic varieties~$V_k(\cX)$ are defined analogously to $\sV_k(\cX)$, but replacing~$=$ by 
$\geq$ in~\eqref{jump}. This term was introduced in~\cite{charvar} for complements to 
plane curves and explicitly related to the structure of the fundamental group in~\cite{eko,charvar}.
To be more precise, the characteristic varieties referred to above can be described as the zero sets 
of the Fitting ideals of the abelianization $\pi_1'/\pi_1''$ of the commutator of~$\pi_1$, which 
coincide with the jumping loci~\eqref{jump} outside of the trivial character 
(see Theorem~\ref{otherdepthdef}). In particular, the characteristic varieties (unlike the jumping 
sets for the higher cohomology spaces) depend only on the fundamental group. Fox calculus provides 
an effective method for calculating the characteristic varieties in the cases when a presentation of 
the fundamental group by generators and relators is known. 

%It was shown in~\cite{arapura} that each
%$V_i\subset \Hom(\pi_1(\cal X),\CC^*)$ 
The results of~\cite{arapura} are as follows:
each $V_k(\cX)\subset \Hom(\pi_1(\cal X),\CC^*)$ 
is a finite union of translated subgroups
(i.e. cosets) of $\Hom(\pi_1(\cX),\CC^*)$. 
Moreover, for each component of positive dimension
there exists a curve $C$ with 
negative Euler characteristic such that this component has the form
$\rho \cdot f^*\Hom(\pi_1(C),\CC^*)$ for some holomorphic map 
$f: \cal X \rightarrow C$. This was supplemented in~\cite{manuscripta} 
%\cite{nonvanishing}
by showing that the zero-dimensional components have finite order.
 
%In this paper we describe {\it geometric and algebraic} methods for finding or
%estimating this topological invariant of the pair $(\cal X,\chi)$
%for finite order characters.
 
A more precise version of this result can be found in~\cite{acm-arapura} in
terms of orbifolds. It includes some missing points regarding resonant
conditions and extends the result from
$V_1(\cX)$ to all characteristic varieties $V_k(\cX)$, $k\geq 1$. 

If $\cal X$ is a complement to a plane projective curve,
the target $C$ of a holomorphic pencil mentioned 
above must be necessarily $C=\PP^1\setminus \{\text{points}\}$
and thus $f$ extends to a rational 
pencil on $\PP^2$. 
In this case, positive dimensional
translated components $\rho \cdot f^*\Hom(\pi_1(C),\CC^*)$ of $V_1(C)$ have been shown
(see Dimca~\cite{dimca-pencils}) to be related to the multiple fibers of such a pencil,
see also~\cite{acm-arapura}. 

For a generic non-isolated character $\chi \in V_k(C)$ in a component of $V_k(C)$ of dimension
greater than one, the following formula for its depth holds: 
\begin{equation}
\dim H^1(C,\chi)=
\begin{cases}
\dim V_k(C)-2=-e(C) & \text{ if } C \text{ is compact} \\
\dim V_k(C)-1=-e(C) & \text{ otherwise,}
\end{cases}
\end{equation}
where $e(C)$ is the (topological) 
Euler characteristic of $C$.\footnote{For example,
if $C=\PP^1\setminus \{n \text{ points}\}$, then 
$\dim(H^1(C,\CC^*))=n-1$ and $\dim(H^1(C,\chi))=-e(C)=n-2$,
if $\chi$ is non-trivial.} 

This provides a simple way to determine or at least to estimate
the depth of characters on components having a {\it positive} 
dimension. 

Isolated points in components $V_k(\cX)$ are common occurrence and below we describe 
the geometric significance of the depth of zero dimensional irreducible 
components of $V_k(\cX)$. We do so using {\it orbifold pencils} associated with such 
characters (as was mentioned, such characters must have a finite order).
 
It is worth mentioning that the nature of the cohomology 
of local systems is essentially different depending on whether
$H_1(\bar{\cal X},\CC)$ is trivial or not. In the latter case, 
due to the surjection
$\pi_1(\cal X) \rightarrow \pi_1(\bar{\cal X}) \rightarrow 1$, 
some of the characters
of $\pi_1(\cal X)$ are the characters of the projective fundamental group
(cf.~\cite{simpson} for a discussion on the difference between the projective and the
quasi-projective case). In this paper (as in~\cite{arapura})
we shall focus on the case when $H_1(\bar{\cal X},\CC)=0$. 
This includes the case of the complements to plane curves which provides many 
concrete and interesting examples.

For the basics on the theory of orbifolds we refer to~\cite{adem} or, since we shall
consider mainly orbifold curves, to~\cite{scott} or~\cite{friedman}. An orbifold
pencil is a (birational) dominant map $\cal X \rightarrow \cal C$, where $\cal C$
is the orbicurve such that the preimage of each point in $\cal C$ with stabilizer of
order $m$ is a multiple fiber of order a multiple of~$m$.
A proof of this lemma can be found in~\cite{acm-arapura}. 
 
\begin{lemma}
\label{lemma-ofg}
An orbifold pencil $f:\cal X \rightarrow \cal C$ defines a morphism of orbifold 
fundamental groups $f_*:\pi_1(\cal X) \rightarrow \pi_1^\orb(\cal C)$. 
\end{lemma}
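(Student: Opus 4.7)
The plan is to construct $f_*$ by first restricting to the complement of the orbifold locus and then showing the induced map descends through the orbifold relations.

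Write $C$ for the underlying smooth curve of $\cal C$ with orbifold points $p_1,\dots,p_n$ of multiplicities $m_1,\dots,m_n$, and set $C^*:=C\setminus\{p_1,\dots,p_n\}$, $\cX^*:=\cX\setminus f^{-1}(\{p_1,\dots,p_n\})$. First I would use the ordinary (non-orbifold) restriction $f|_{\cX^*}\colon \cX^*\to C^*$, which is a holomorphic map between smooth quasi-projective varieties, so it induces the standard homomorphism $(f|_{\cX^*})_*\colon \pi_1(\cX^*)\to \pi_1(C^*)$. Composing with the canonical surjection $\pi_1(C^*)\surj \pi_1^{\orb}(\cal C)=\pi_1(C^*)/\langle\!\langle \gamma_i^{m_i}\rangle\!\rangle$ (where $\gamma_i$ is a meridian around $p_i$) yields a homomorphism $\varphi\colon \pi_1(\cX^*)\to \pi_1^{\orb}(\cal C)$.

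Next I would argue that $\varphi$ factors through $\pi_1(\cX)$. The inclusion $\cX^*\hookrightarrow \cX$ induces a surjection $\pi_1(\cX^*)\surj \pi_1(\cX)$ whose kernel is normally generated by the meridians $\delta_D$ around the (reduced) irreducible components $D$ of the divisors $f^{-1}(p_i)$. Fix such a $D\subset f^{-1}(p_i)$, let $\mu_D$ be its multiplicity in $f^{-1}(p_i)$, and choose a local coordinate near a smooth point of $D$ at which $f$ has the local form $z\mapsto z^{\mu_D}$ (after translating so that $p_i$ corresponds to $0$). Then $(f|_{\cX^*})_*(\delta_D)=\gamma_i^{\mu_D}$ in $\pi_1(C^*)$. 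By the very definition of orbifold pencil, $m_i\mid \mu_D$, so $\gamma_i^{\mu_D}$ is trivial in $\pi_1^{\orb}(\cal C)$, hence $\varphi(\delta_D)=1$.

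Since every normal generator of the kernel of $\pi_1(\cX^*)\surj \pi_1(\cX)$ lies in $\ker\varphi$, the homomorphism $\varphi$ descends to the desired $f_*\colon \pi_1(\cX)\to \pi_1^{\orb}(\cal C)$, and this map is independent of the basepoint/meridian choices up to the usual conjugation ambiguity. The only nontrivial point is the local computation of $(f|_{\cX^*})_*(\delta_D)$, which is the standard fact that a meridian around a divisor mapped by $z\mapsto z^{\mu}$ goes to the $\mu$-th power of a meridian downstairs; everything else is a direct application of Van Kampen in the complement of a divisor and the defining presentation of the orbifold fundamental group of a curve.
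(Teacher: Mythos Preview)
Your argument is correct and is essentially the same as the one the paper uses: the paper defers the full proof to~\cite{acm-arapura}, but the Remark following Definition~\ref{def-orb-pencil} outlines exactly your approach, observing that a meridian $\Gamma_i$ about a component of $\phi^{-1}(p_i)$ maps into the subgroup generated by $\gamma_i^{m(p_i)}$ and hence dies in $\pi_1^{\orb}(\cC)$. Your write-up simply fills in the details (the Van Kampen description of $\ker(\pi_1(\cX^*)\to\pi_1(\cX))$ and the local computation $(f|_{\cX^*})_*(\delta_D)=\gamma_i^{\mu_D}$) that the paper leaves implicit.
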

 
% A character $\chi \in \Hom(\pi_1(\cal X),\CC^*)$ 
% \emph{is afforded} by an orbifold pencil $f: \cal X \rightarrow \cal C$ if 
% \begin{equation} \chi \in f^*(\Hom(\pi^\orb_1(\cal C),\CC^*).
% \end{equation}
% see Definition~\ref{def-afforded}.

We call (cf. Definition~\ref{def-orb-marking}) the map described in the 
previous lemma a \emph{marked orbifold pencil} $f: \cX \rightarrow \cC$.
The \emph{markings} are given by the pairs $(\cX,\chi), \chi \in V_k(\cX)$,
and $(\cC,\rho), \rho \in V_k^{\orb}(\cC)$ such that $f^*(\rho)=\chi$, where 
$f^*$ is the map of groups of characters corresponding to~$f_*$. Note that 
$V_k^{\orb}(\cC)$ is the orbifold characteristic variety of $\cC$ defined 
as $V_k$ in~\eqref{jump} for $\pi_1^{\orb}({\cal C})$,
which only depends on the group, as mentioned above.

A pair $(\cX,\chi)$ can be marked by several orbifold pencils
and we show that the number of such markings is related
in an appropriate sense to the depth (cf. Theorem~\ref{mordweiltheorem} 
below and section~\ref{quasiprojpencils-sec-thm}).

% We call $f$ from this result {\it the structure map} and we say 
% that a character $\chi \in V_k(\cX)$ is \emph{afforded} by the
% pencil $f$ if there is a character $\rho \in V_k^{\orb}(\cC)$
% such that $f^*(\rho)=\chi$ where $f^*$ is the map of groups of characters
% corresponding to~$f_*$. Note that $V_k^{\orb}(\cC)$ is the orbifold characteristic
% variety of $\cC$ defined as $V_k$ in \eqref{jump} for $\pi_1^{\orb}({\cal C})$, 
% which only depends on the group, as mentioned above.
% 
% A character can be afforded by several orbifold pencils and we show 
% that the number (in an appropriate sense) of pencils affording a
% character is related to its depth.
 
The relation between orbifold pencils and local systems with non-vanishing cohomology 
%which are isolated in the space of characters
was studied in~\cite{mordweil}. 
In this paper, the problem of finding a bound on the degree of the 
Alexander polynomial is discussed for plane curves with cusps and nodes as the only
singularities (or curves with singularities in a more general class of $\delta$-essential singularities).
The connection with the cohomology of local systems is 
coming from the following:
For an irreducible curve $D$ in $\PP^2$
one has $H_1(\PP^2\setminus D,\ZZ)=\ZZ/{\deg D}\ZZ$ 
i.e. $\Hom(\pi_1(\cal X),\CC^*)=\mu_{\deg D}$ and 
if $\chi_{\xi}$ corresponds to $\xi \in \mu_{\deg D}$ 
then:
\begin{equation}
\Delta(\xi)=0 \Rightarrow\dim H^1(\PP^2\setminus D,\chi_{\xi})\ne 0. 
\end{equation}
The key step in \cite{mordweil}
for obtaining the bound on the degree of the Alexander polynomial
(or equivalently the multiplicity of the root $\exp(\frac{2 \pi i}{6})$)
was to show the following:

\begin{theorem}\label{mordweiltheorem}
The degree of the Alexander polynomial of a curve $D$ having 
cusps and nodes as the only singularities coincides with the number of 
independent orbifold pencils $\PP^2 \rightarrow \PP^1_{2,3,6}$ such that $D$ is the preimage of the
orbifold point having the cyclic group of order six as the 
stabilizer. This number of independent pencils equals the rank of the group of quasitoric relations 
\begin{equation}
u^2+v^3=w^6F,
\end{equation} 
where $F=0$ is a defining equation for $D$.
\end{theorem}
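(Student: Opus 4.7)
The aim is to establish the equality of three quantities: the degree of the Alexander polynomial $\Delta(t)$ of $D$, the number of independent orbifold pencils $f\colon \PP^2\to \PP^1_{2,3,6}$ realizing $D$ as the preimage of the point with stabilizer of order $6$, and the rank of the group of quasitoric relations $u^2+v^3=w^6F$. The strategy is to funnel all three counts through the cohomology space $H^1(\PP^2\setminus D,\chi)$ for the order-six character $\chi$ corresponding to $\zeta_6=\exp(2\pi\ii/6)$.

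The first step reduces the degree of $\Delta$ to a depth statement. A classical result for curves whose singularities are only cusps and nodes restricts the roots of $\Delta(t)$ to primitive sixth roots of unity, and these two roots are interchanged by complex conjugation, giving equal multiplicity. Combined with the standard identification of the multiplicity of a root $\xi\in\mu_{\deg D}$ of the Alexander polynomial with $\dim H^1(\PP^2\setminus D,\chi_\xi)$, this yields
\[
\deg\Delta(t) \;=\; 2\dim H^1(\PP^2\setminus D,\chi_{\zeta_6}),
\]
so it suffices to identify the right-hand side with twice the rank of the pencil group (the factor of $2$ being absorbed into the convention for counting pencils and their complex conjugates).

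The second step translates pencils into characters via Lemma~\ref{lemma-ofg}. An orbifold pencil $f\colon \PP^2\to\PP^1_{2,3,6}$ induces $f_*\colon \pi_1(\PP^2\setminus D)\to \pi_1^\orb(\PP^1_{2,3,6})$, and the orbifold abelianization of the target is cyclic of order $6$ generated by the meridian around the order-$6$ orbifold point. The hypothesis that $D$ sits over this point forces $f^*$ to pull back the distinguished order-six character of $\pi_1^\orb(\PP^1_{2,3,6})$ to $\chi_{\zeta_6}$. A direct computation of $H^1(\PP^1_{2,3,6},\rho)$ for this character shows that it is one-dimensional and produces a nonzero class in $H^1(\PP^2\setminus D,\chi_{\zeta_6})$ by pullback. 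Independence of the pencils (in the marked sense introduced before the theorem) translates to linear independence of the corresponding cohomology classes.

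The third step identifies pencils with quasitoric relations. A relation $u^2+v^3=w^6F$ with $u,v,w$ homogeneous of appropriately matched degrees defines a rational map $[x:y:z]\mapsto [u^2:v^3]$ which, once endowed with the orbifold structure at $[1:0]$, $[0:1]$ and $[1:-1]$ (with stabilizers of orders $2$, $3$, $6$ respectively), becomes an orbifold pencil onto $\PP^1_{2,3,6}$ sending $D=\{F=0\}$ to the order-six point. Conversely, given such a pencil, lifting it through the elliptic cover of $\PP^1_{2,3,6}$ and unpacking the multiplicity conditions at the three orbifold points recovers the triple $(u,v,w)$ up to common scaling, so the rank of the quasitoric group matches the number of independent pencils.

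\emph{The main obstacle} is the converse direction in step two: showing that every class in $H^1(\PP^2\setminus D,\chi_{\zeta_6})$ is accounted for by an orbifold pencil of this specific form. This is not automatic from the Arapura-type structural results, because here we are dealing with \emph{isolated} (zero-dimensional) characters in $V_k$, where the general theorems guarantee only finite order and not the existence of a pencil. The argument requires exploiting that $\PP^1_{2,3,6}$ is uniformized by an elliptic curve and that resonant classes on $\PP^2\setminus D$ with cusp-node singularities admit a geometric interpretation in terms of adjoint linear systems, forcing the sought factorization; this is where the hypothesis on the singularities (only cusps and nodes) is essential.
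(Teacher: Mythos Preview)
This theorem is not proved in the present paper; it is quoted from~\cite{mordweil} as motivation for the main result (Theorem~\ref{thm-main}), so there is no in-paper proof to compare your attempt against. What the paper does record about the argument in~\cite{mordweil} --- see the remark following Proposition~\ref{prop-indep} --- is that it proceeds through the Mordell--Weil group of the elliptic curve $E_0$ with an order-six automorphism, viewed over the function field $K$ of the six-fold cyclic cover $X_6$ of $\PP^2$ ramified along $D$. Quasitoric relations correspond to $K$-points of $E_0$, and equivariant morphisms $X_6 \to E_0$ descend to orbifold pencils onto $E_0/\ZZ_6 \cong \PP^1_{2,3,6}$; the match with $\deg\Delta$ is obtained by identifying the $\chi$-isotypic piece of $\Alb(X_6)$ with a power of $E_0$. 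This is the same template the present paper runs for the $2$-torsion case (Theorem~\ref{thm-main}\ref{thm-main-part2}), with $\CC^*$ in place of~$E_0$.

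Your outline correctly isolates the hard direction --- producing a pencil from every cohomology class --- but does not prove it. The sentence ``resonant classes admit a geometric interpretation in terms of adjoint linear systems, forcing the sought factorization'' is a promissory note, not an argument. The substantive step your sketch never reaches is precisely the identification of the relevant piece of $\Alb(X_6)$ as an abelian variety isogenous to a power of $E_0$; this is where the cusp/node hypothesis genuinely enters (via the Hodge structure on the cover), and once it is in hand the pencils come for free from Albanese functoriality, exactly as in the proof of Theorem~\ref{thm-main}\ref{thm-main-part2}. Your treatment of the factor of two (``absorbed into the convention for counting pencils and their complex conjugates'') is likewise not an argument: the bookkeeping depends on whether one counts $\ZZ[\ZZ_6]$-independent or $\ZZ$-independent pencils and on the $\ZZ[\ZZ_6]$-module structure of $H_1(E_0,\ZZ)$, and must be pinned down rather than waved away.
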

 
Theorem~\ref{mordweiltheorem} can also be extended to general Alexander polynomials and non-reduced curves. 

One of the main results of this paper is the following theorem
(proven in section~\ref{quasiprojpencils-sec-thm}) 
providing
the relations between orbifold pencils and depth. 
It shows that the number of independent pencils (with a given target) provides
a lower bound for the depth of a character. Moreover, for
an interesting class of characters this bound is exact. 
 
\begin{theorem}\label{thm-main} 
Let $\cal X$ be a quasi-projective manifold together with a character $\chi$.
\begin{enumerate}
\enet{\rm(\arabic{enumi})} 
\item\label{thm-main-part1}
Assume that there are $n$ strongly independent marked orbifold pencils with a fixed target $(\cC,\rho)$ and let $d(\rho)$ denote the 
depth of the character $\rho$ of $\pi_1^{orb}(\cC)$.
Then 
$d(\chi) \ge n d(\rho)$. 

\item\label{thm-main-part2}
If in addition
%$\cal X$ is the complement of a hypersurface in a projective space and
$\chi$ is a 2-torsion character and two is its only weight
(cf.~{\rm \ref{weightcharacter}} for a definition of weights of a character), then there are exactly
$d(\chi)$ strongly independent orbifold pencils on $\cX$ whose target is the
global $\ZZ_2$-orbifold $\cC=\CC_{2,2}$. These pencils are marked with the non-trivial character
$\rho$ of $\pi_1^{\orb}(\CC_{2,2})$ characterized by the condition that it extends to $\PP^1_{2,2}$. 
\end{enumerate} 
 
Moreover, if $\chi$ is a $d$-torsion character, then~\ref{thm-main-part1} implies that
$\phi_d(t)^{nd(\rho)}|\Delta_{X,\chi}(t)$ and \ref{thm-main-part2} implies that $d(\chi)$ is the multiplicity
of $\phi_2(t)=(t+1)$ as a factor of $\Delta_{X,\chi}(t)$, where $\phi_k(t)$ denotes the cyclotomic polynomial
of order~$k$.
\end{theorem}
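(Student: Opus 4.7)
The natural plan is to pull back cohomology through each pencil. Each marked orbifold pencil $f_i\colon \cX \to \cC$, for $i=1,\dots,n$, yields by Lemma~\ref{lemma-ofg} a homomorphism $f_{i*}\colon \pi_1(\cX) \to \pi_1^{\orb}(\cC)$ with $f_i^*(\rho)=\chi$. Functoriality of cohomology with local coefficients gives pullback maps
\[
f_i^* \colon H^1(\cC,\rho)\longrightarrow H^1(\cX,\chi),
\]
each of which is injective because an orbifold pencil is by definition surjective on orbifold fundamental groups. The content of the (yet-to-be-read-off) notion of \emph{strong independence} should be exactly that the images of the $f_i^*$ form an internal direct sum in $H^1(\cX,\chi)$; summing dimensions yields $d(\chi)\ge n\, d(\rho)$.

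\textbf{Part (2): Converse via the associated double cover.} This is the main obstacle, since one has to \emph{construct} pencils from cohomology classes. Let $\sigma\colon \tilde\cX \to \cX$ be the double cover determined by $\chi$ and let $\iota$ denote the deck involution. Transfer identifies $H^1(\cX,\chi)$ with the $(-1)$-eigenspace $H^1(\tilde\cX,\CC)^{-}$ of $\iota^*$. The hypothesis that $2$ is the only weight of $\chi$ (in the sense of~\ref{weightcharacter}) says that this eigenspace is pure of weight $2$ in the Deligne mixed Hodge structure, so each class lifts to $H^1(\overline{\tilde\cX},\CC)^{-}$ for any smooth compactification. Applying an equivariant form of the Castelnuovo--de Franchis--Beauville theorem to each nonzero $\iota$-anti-invariant holomorphic $1$-form factors it through an equivariant map $\overline{\tilde\cX}\to \overline C$ to a curve $\overline C$ carrying an involution; passing to the quotient and puncturing at the ramification locus produces an orbifold pencil $\cX\to \CC_{2,2}$, automatically marked by the unique nontrivial $\rho$ that extends to $\PP^1_{2,2}$. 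Part~(1) applied with $d(\rho)=1$ shows that at most $d(\chi)$ such pencils can be strongly independent, while the preceding Hodge-theoretic construction supplies at least $d(\chi)$, giving equality. The delicate point will be translating linear independence of classes in $H^1(\cX,\chi)$ into \emph{strong} independence of the resulting pencils in the sense of the paper's definition; this should be built into the construction because distinct classes produce pencils whose pullback subspaces $f^*H^1(\CC_{2,2},\rho)$ are linearly independent by design.

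\textbf{Alexander polynomial divisibility.} By Theorem~\ref{otherdepthdef}, the depth $d(\chi)$ for $\chi$ of order $d$ coincides with the order of vanishing at $\chi$ of the appropriate Fitting ideal of $\pi_1'/\pi_1''$, equivalently the multiplicity of the cyclotomic factor $\phi_d(t)$ in $\Delta_{\cX,\chi}(t)$. The divisibility $\phi_d(t)^{n\,d(\rho)}\mid \Delta_{\cX,\chi}(t)$ is then immediate from part~(1), and for 2-torsion characters of weight 2 the equality $d(\chi)$ = multiplicity of $\phi_2(t)=t+1$ follows from the sharp count in part~(2).
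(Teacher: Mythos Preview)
Your Part~(1) sketch is close in spirit to the paper's argument, though you are guessing at the definition of strong independence. In the paper (Definition~\ref{def-indep}) strong independence is formulated on the covers: the equivariant map $\bigoplus_i \Phi_{i,*}\colon H_1(X_G,\ZZ)\to H_1(C,\ZZ)^n$ is required to be \emph{surjective}. Dualizing then gives an injection of $\rho$-eigenspaces $H^1(C,\ZZ)_\rho^n \hookrightarrow H^1(X_G,\ZZ)_\chi$, hence $n\,d(\rho)\le d(\chi)$. So your outline is essentially correct once the actual definition is substituted.

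Part~(2), however, contains a genuine error: you have the weight convention reversed. In Deligne's theory $W_1 H^1(X_\chi)=\im\bigl(H^1(\overline{X_\chi})\to H^1(X_\chi)\bigr)$, so classes of weight~\emph{one} are precisely those coming from a smooth compactification, while weight~\emph{two} classes are purely ``at infinity.'' The hypothesis that $\chi$ has only weight two therefore says the $(-1)$-eigenspace does \emph{not} lift to $H^1(\overline{X_\chi})$, and there are no $\iota$-anti-invariant holomorphic $1$-forms on the compactification to feed into Castelnuovo--de~Franchis. (Even were there such forms, Castelnuovo--de~Franchis needs a two-dimensional isotropic subspace, not a single class.)

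The paper's construction is quite different and uses the semi-abelian Albanese. The involution splits $\Alb(X_\chi)$ up to isogeny as $\Alb(\cX)\oplus\Alb(X_\chi)^-$ (Corollary~\ref{involutiondecomp}); the weight-two hypothesis forces $\Alb(X_\chi)^-$ to have no compact part, so by Theorem~\ref{albanese} it is an affine torus $(\CC^*)^{d(\chi)}$ on which the deck involution acts by $z\mapsto z^{-1}$. The $d(\chi)$ coordinate projections give $\ZZ_2$-equivariant maps $X_\chi\to\CC^*$, which descend to orbifold pencils $\cX\to\CC_{2,2}$; a Stein factorization argument arranges connected fibers so that the induced maps on $H_1$ are surjective, yielding strong independence. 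Thus the pencils are produced by algebraic-group maps to $\CC^*$, not by Castelnuovo--de~Franchis maps to compact curves.

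The ``moreover'' paragraph is fine in outline; the precise reference is Proposition~\ref{prop-alex-roots} rather than Theorem~\ref{otherdepthdef}.
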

 
See sections~\ref{sec-orbicurves}, \ref{quasiprojpencils}, and~\ref{essentialsection} for the required definitions.
%In some cases the inequality in this theorem can be made into an
%equality (ibid).
The Hodge theoretical condition on $\chi$ of having two as its only weight,
can be characterized as the requirement of the equality of
the first Betti numbers of both the double cover of ${\mathcal{X}}$
defined by $\chi$
and its smooth compactification; see Theorem~\ref{thm-hodge} for
another characterization. We specialize these results to the case of complements to plane curves in section~\ref{curvessection}.
The group $\pi_1(\PP^2\setminus D)$ is closely related to its central extension
$\pi_1(\CC^2\setminus D)$ where $\CC^2$ is obtained from $\PP^2$ 
by deleting a generic line at infinity.
In this case the group of characters is $(\CC^*)^r$ where $r$ is the number of irreducible components.
The properties of the characters lying on coordinate components are essentially different and in
section~\ref{essentialsection} we give a Hodge theoretical characterization of coordinate essential characters.
 
In the case of plane curves the orbifold pencils correspond to solutions of certain equations over the function field $\CC(x,y)$.
For example, as mentioned in Theorem~\ref{mordweiltheorem}, 
the depth of characters of order $6$ is related to the number of independent polynomial solutions in $u,v,w$ of the
quasitoric equation $u^2+v^3=w^6F$ 
of type~$(2,3,6)$.
This also can be used to relate 
the cohomology of the Milnor fiber of arrangements of lines with triple 
points and solutions to the Catalan equation (cf.~\cite{catalan}). 
%For example, as mentioned in Theorem~\ref{mordweiltheorem}, 
%the depth of characters of order $6$ is related to the number of 
%independent polynomial solutions in $u,v,w$ of the
%quasitoric equation of type $(2,3,6)$ given by $u^2+v^3=w^6F$. 
%A similar result for characters of order two is shown in section~\ref{sec-qt}. 

A similar result for characters of order two is shown in section~\ref{sec-qt}.
Let $D\subset \PP^2$ be a projective plane curve, $X_D:=\PP^2\setminus D$ its complement, and $\chi$ a 2-torsion 
character on $\pi_1(X_D)$. Denote by $Q_{(D,\chi)}$ the set of $(2,2,0)$-quasitoric relations associated to~$\chi$, that is, 
\begin{equation}
\label{eq-qtintro}
Q_{(D,\chi)}:=\{(f,g,h,U,V)\in \CC[x,y,z]^5 \mid f U^2-g V^2=h, f\cdot g=F, h_{\text{red}}|H\}/\sim ,
\end{equation}
where $D:=\{FH=0\}$, $\chi$ is ramified exactly along $F=0$ (see Definition~\ref{dfn-qt220}) and $\sim$ is the appropriate equivalence relation. Then 
 
\begin{theorem} 
\label{thm-qt220} 
The set of $(2,2,0)$-quasitoric relations $Q_{(D,\chi)}$ has a structure of 
a finitely generated abelian group and 
$$\rk Q_{(D,\chi)} \leq d(\chi).$$ 
Moreover, if $X_D$ and $\chi$ satisfy the conditions of Theorem{\rm~\ref{thm-main}\ref{thm-main-part2}}, then 
$$\rk Q_{(D,\chi)} = d(\chi).$$ 
\end{theorem}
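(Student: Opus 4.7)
The plan is to identify $Q_{(D,\chi)}$, up to torsion, with the lattice of strongly independent marked orbifold pencils $(X_D,\chi)\to(\CC_{2,2},\rho)$, where $\rho$ is the distinguished non-trivial character of $\pi_1^{\orb}(\CC_{2,2})$ extending to $\PP^1_{2,2}$, and then to invoke Theorem~\ref{thm-main}. Since $\CC_{2,2}$ is an orbicurve with $d(\rho)=1$, part~\ref{thm-main-part1} of that theorem will yield the upper bound $\rk Q_{(D,\chi)}\le d(\chi)$; under the weight-two hypothesis, part~\ref{thm-main-part2} combined with a converse construction will give equality.

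First I would endow $Q_{(D,\chi)}$ with an abelian group structure. Let $\pi\colon Y\to X_D$ be the double cover defined by $\chi$, branched along $\{F=0\}$. Since $fg=F$, passing to the $(\ZZ_2)^2$-cover $Y'=\{\alpha^2=f,\,\beta^2=g\}\to X_D$ we may factor
$$fU^2 - gV^2 \;=\; (\alpha U + \beta V)(\alpha U - \beta V) \;=\; h.$$
A Brahmagupta-type composition $\alpha U_3+\beta V_3:=(\alpha U_1+\beta V_1)(\alpha U_2+\beta V_2)$ produces a new quasitoric relation with $h_3=h_1h_2$, and modulo the equivalence $\sim$ (absorbing units and common factors drawn from $H$) this becomes associative and commutative, with a natural identity and with inverses coming from Galois conjugation $\beta\mapsto-\beta$. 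Verifying that $\sim$ is compatible with this operation is routine but unavoidable.

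Next, to each non-trivial class I attach an orbifold pencil. The map $[\alpha U+\beta V:\alpha U-\beta V]\colon Y\dashrightarrow \PP^1$ is equivariant for the Galois involution of $Y\to X_D$ (it exchanges the two homogeneous coordinates), hence descends to a rational map $X_D\dashrightarrow \PP^1/\ZZ_2$ factoring through the orbifold $\PP^1_{2,2}$; the preimages of the two orbifold points lie in $\{h=0\}\subseteq\{H=0\}$, and after discarding the orbifold point at infinity we obtain a pencil $X_D\to \CC_{2,2}$ marked by $(\chi,\rho)$. Linearly independent classes produce strongly independent pencils: a $\QQ$-linear relation in $Q_{(D,\chi)}\otimes\QQ$ would translate into a multiplicative dependence among the sections $\alpha U_i+\beta V_i$, incompatible with strong independence in the sense of Section~\ref{essentialsection}. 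Applying Theorem~\ref{thm-main}\ref{thm-main-part1} with $d(\rho)=1$ then yields $\rk Q_{(D,\chi)}\le d(\chi)$, and finite generation of $Q_{(D,\chi)}$ follows because its image lies in the finitely generated lattice of strongly independent pencils.

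For the equality, Theorem~\ref{thm-main}\ref{thm-main-part2} guarantees exactly $d(\chi)$ strongly independent marked pencils $X_D\to\CC_{2,2}$, each of which must be realized by a quasitoric relation. Given such a pencil $\phi$, pull it back to $Y$ to obtain a rational map $\tilde\phi\colon Y\dashrightarrow \PP^1$ whose two homogeneous coordinates are swapped by the Galois involution; write them as $\alpha U\pm\beta V$ with $\alpha^2=f$, $\beta^2=g$, and clear denominators to obtain polynomials $f,g,U,V,h\in\CC[x,y,z]$ satisfying $fg=F$ and $h=fU^2-gV^2$ supported on $H$. This realization is the main obstacle: one must show that the Galois-antiinvariant sections are restrictions of honest polynomials on $\PP^2$ and that the induced factorization $fg=F$ matches the ramification data of $\chi$. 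A parallel reconstruction for $(2,3,6)$ quasitoric relations and pencils to $\PP^1_{2,3,6}$ was carried out in~\cite{mordweil}; the present situation is simpler because the target has an order-two involution. Once realized, the $d(\chi)$ resulting relations are $\ZZ$-independent by the strong independence of the pencils, yielding $\rk Q_{(D,\chi)}\ge d(\chi)$ and hence equality.
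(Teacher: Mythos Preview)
Your approach is essentially the paper's: both identify $Q_{(D,\chi)}$ with the group of $\ZZ_2$-equivariant morphisms from the double cover to $\CC^*$ (equivalently, marked orbifold pencils to $(\CC_{2,2},\rho)$) and then extract the rank from the anti-invariant part of the Albanese. The paper packages this as an explicit isomorphism $Q_{(D,\chi)}\cong\Mor_{\ZZ_2}(X_2,\CC^*)$ (Proposition~\ref{prop-qt-iso}) and then reads off $\Mor_{\ZZ_2}(X_2,\CC^*)\cong\Hom_{\ZZ_2}(\Alb(X_2)^-,\CC^*)\cong\ZZ^d$ with $d\le d(\chi)$, equality holding under the weight-two hypothesis; your detour through the pencil count of Theorem~\ref{thm-main} unwinds to the same Albanese computation. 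A side benefit of the paper's route is that finite generation (indeed freeness) of $Q_{(D,\chi)}$ is immediate, whereas your argument only bounds the rank of the image in ``the lattice of pencils'' and says nothing about possible torsion in the kernel.

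There is one genuine imprecision in your group law. Your Brahmagupta composition $(\alpha U_1+\beta V_1)(\alpha U_2+\beta V_2)$ tacitly assumes the \emph{same} splitting $F=fg$ for both relations, but distinct elements of $Q_{(D,\chi)}$ may carry different factorizations $f_1g_1=f_2g_2=F$ (the Ceva example in Section~\ref{sec-examples} exhibits generators with $f_1=\ell_2\ell_5$ and $f_2=\ell_2\ell_6$). The paper handles this by working not on a single $(\ZZ_2)^2$-cover but in the abelian extension $\KK_S=\CC(x,y)[\sqrt{S}]$ generated by square roots of \emph{all} irreducible components of $D$: quasitoric relations become $F$-pairs in a subgroup $G\subset G_S\cong\KK_S^*$ (Lemma~\ref{lem-def-g}), and the normal $F$-decomposition (Definition~\ref{def-fnormal}) recovers the correct quintuple $(f,g,h,U,V)$ from a product. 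Your phrase ``absorbing units and common factors drawn from $H$'' is the right instinct but does not cover the reshuffling of factors between $f$ and $g$; once you pass to $\KK_S$ your construction agrees with the paper's. Incidentally, the $(\ZZ_2)^2$-cover $Y'$ is an unnecessary detour: since $\alpha\beta=\sqrt{F}$ already lives on $X_2=\{w^2=F\}$, the function $(\alpha U+\beta V)^2=fU^2+gV^2+2wUV$ is defined there, which is exactly how the paper writes the equivariant map $\Psi_p\colon X_2\to\CC^*$.
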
 
 
We refer to section~\ref{sec-qt} for the exact definition of $Q_{(D,\chi)}$ 
and the equivalence $\sim$ 
among quasitoric relations. 
This result is illustrated both in section~\ref{sec-examples} of this paper and in~\cite{pisa}
with several non-trivial examples aiming to describe a calculation method for the group structure of $Q_{(D,\chi)}$.

Finally, we note that there is a surprising connection between the polynomial
 equations considered in~\eqref{eq-qtintro} and
the Pell equations over the field of rational functions $\CC(x,y)$. 
Investigations of the Pell equations 
\begin{equation}\label{pell}
u^2-f(x)v^2=1
\end{equation}
over the function field $\CC(x)$
apparently go back to Abel~\cite{abel} (cf. \cite{shabat-zvonkin}).
%over $\ZZ[x]$ is given by Nathanson~\cite{nathanson-pell} (for $f=x^2-d$)
More recently, the equation (\ref{pell}) 
over $k[x]$ was considered by F.Hazama in~\cite{hazama-pell},
\cite{hazama-twists}
where a group structure closely 
resembling the one described in Theorem~\ref{thm-qt220} also appeared. 
A more detailed study of this connection is out of the scope
 of this note, but will appear elsewhere.

\section{Preliminaries}

\subsection{Characteristic varieties} \label{sec-charvar} 
\mbox{} 
 
Recall the basic definitions and results related to characteristic varieties and homology of covering
spaces. We will follow the original exposition given in~\cite{charvar}, but rephrase it in a more general setting. 
 
Throughout this section $X$ will be considered a topological space of finite type (that is, $X$ has the homotopy type
of a finite $CW$-complex), $\pi_1'(X)\subset \pi_1(X)$ be the commutator of the fundamental group. We shall
assume that $\pi_1(X)/\pi_1(X)' = H_1(X,\ZZ)$ is a free abelian group of rank $r$.
Basic examples are the complement to plane algebraic curves in $\CC^2$ with $r$ components and
links in a 3-sphere with $r$ components. 
% with $H$ being the commutator subgroup.
 
Consider the torus of characters of $\pi_1(X)$ i.e. 
\begin{equation}
\Char(X):=\Hom(\pi_1(X),\CC^*).
\end{equation}
Alternatively, since $\Char(X)$ depends only on $\pi_1(X)$ we refer to it as $\Char(\pi_1(X))$.
This torus is canonically isomorphic to the spectrum $\Spec \CC[H_1(X,\ZZ)]=({\CC^*})^r$
of the group ring of abelianized $\pi_1(X)$.
Let $X_{\ab} \rightarrow X$ be the universal abelian cover i.e. the covering 
with the group $H_1(X,\ZZ)$. The group $H_1(X,\ZZ)$ acts
on $X_{\ab}$ as a group of automorphisms and this
provides $H_*(X_{\ab},\CC)$ with a structure of a $\CC[H_1(X,\ZZ)]$-module.
Recall that with each $R$-module~$M$ over a commutative
ring $R$ one associates the support which is the subvariety of $\Spec R$ consisting 
of the prime ideals~$\mathfrak{p}$ such that the localization~$M_{\mathfrak{p}}$ does not vanish. 

\begin{dfn}\label{charvardef} The \emph{characteristic variety} $V_k(X)$ is the
subvariety of the torus $\Char(X)=\Spec \CC[H_1(X,\ZZ)]$ given as the support of the module
$\bigwedge^k(H_1(X_{\ab},\CC))$ (the exterior power of the homology module). Alternatively, $V_k(X)$
can be given as the zero set of the $k$-th Fitting ideal of $H_1(X_{\ab},\CC)$, that is, the ideal generated by the 
$(n-k) \times (n-k)$ minors of the matrix $\Phi$ with coefficients in $\CC[\pi_1(X)]$ of the map $\Phi$:
% \begin{equation}\label{presentation}
%\CC[\pi_1(X)/\pi_1'(X)]^m \buildrel \Phi \over
% \rightarrow \CC[\pi_1(X)/\pi_1'(X)]^n \rightarrow H_1(X_{\ab},\CC)
% \rightarrow 0. 
% \end{equation}
\begin{equation}\label{presentation} 
%\begin{center} 
\begin{tikzpicture}[description/.style={fill=white,inner sep=2pt},baseline=(current bounding box.center)] 
\matrix (m) [matrix of math nodes, row sep=3em, 
column sep=2.5em, text height=1.5ex, text depth=0.25ex] 
{ \CC[\pi_1(X)/\pi_1'(X)]^m& \CC[\pi_1(X)/\pi_1'(X)]^n& H_1(X_{\ab},\CC) &0.\\ }; 
%\path[->,font=\normalsize] 
\path[->,>=angle 90](m-1-1) edge node[auto] {$\Phi $} (m-1-2); 
\path[->,>=angle 90](m-1-2)edge node[auto,swap] {}(m-1-3); 
\path[->,>=angle 90](m-1-3)edge node[auto,swap] {}(m-1-4); 
\end{tikzpicture} 
%\end{center} 
\end{equation} 
We denote by $\sV_k(X)$ the set of the characters in $V_k(X)$ which do not belong to $V_j(X)$ for $j>k$.
If a character $\chi$ belongs to $\sV_k(X)$, then $k$ is called the  \emph{depth}
of $\chi$ and denoted by $d(\chi)$. 
\footnote{cf. Theorem~\ref{otherdepthdef} for comparison of this
definition and comments after~\eqref{jump}}
\end{dfn}
 
The following expresses the homology of finite abelian covers in terms of the depth of characters of $\pi_1$. 
The argument follows closely the one given in~\cite{abcovhomology} and~\cite{charvar}, but 
we will present some details here since the statement of Theorem~\ref{bettinumberofcover} is in a more
general context than in the references above. See also~\cite{acm-hefei,eko,sakuma}. 
 
\begin{theorem}\label{bettinumberofcover}
Let $X$ be a finite $CW$-complex, let $H$ be subgroup of $\pi_1(X)$ of
finite index containing the commutator $\pi_1(X)'$ and let $K:=\pi_1(X)/H$. 
 
Let $i_H: \Char(K) \rightarrow \Char(X)$ be the embedding of the character varieties induced 
by the surjection $\pi_1(X) \rightarrow K$. 
Let $X_H$ be the covering of $X$ corresponding to the subgroup $H$.

Then
\begin{equation}\label{formulabetti} 
b_1(X_H)=b_1(X)+\sum_{\xi \in \Char(K)\setminus\{1\}} d(i_H(\xi)).
\end{equation}
\end{theorem}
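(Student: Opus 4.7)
The plan is to exploit the $K$-action on $X_H$ to decompose $H_1(X_H,\CC)$ as a $\CC[K]$-module into its $\Char(K)$-isotypic components, and then identify each such component with the cohomology of $X$ twisted by the corresponding character of $\pi_1(X)$. Concretely, since $K$ is a finite abelian group acting freely on $X_H$ with quotient $X$, averaging against characters yields a decomposition of the cellular (or singular) chain complex
$$C_*(X_H,\CC)=\bigoplus_{\xi\in\Char(K)}C_*(X_H,\CC)_{\xi},$$
where the $\xi$-eigen-summand is canonically identified with $C_*(X,L_{i_H(\xi)})$, the chains of $X$ with values in the rank-one local system $L_{i_H(\xi)}$. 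This is essentially Shapiro's lemma applied to the induced representation $\CC[K]=\mathrm{Ind}_{H}^{\pi_1(X)}\CC=\bigoplus_{\xi}L_{i_H(\xi)}$, together with the fact that $C_*(X_H,\CC)=C_*(X,\CC[K])$ as $\CC[K]$-local-systems on $X$.

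Taking homology yields the key decomposition
$$H_1(X_H,\CC)=\bigoplus_{\xi\in\Char(K)}H_1\bigl(X,L_{i_H(\xi)}\bigr).$$
The trivial character $\xi=1$ contributes $H_1(X,\CC)$, whose dimension is $b_1(X)$. For every non-trivial $\xi$, the character $i_H(\xi)\in\Char(X)$ is itself non-trivial because the surjection $\pi_1(X)\rightarrow K$ makes $i_H$ injective. Hence one can invoke the equivalence, for non-trivial characters, between the Fitting-ideal definition of depth given in Definition~\ref{charvardef} and the jumping-locus definition from~\eqref{jump}; this is exactly the content of the statement mentioned in the introduction (and formalized in Theorem~\ref{otherdepthdef}) that the two definitions coincide off the trivial character. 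Consequently $\dim H_1(X,L_{i_H(\xi)})=d(i_H(\xi))$ for $\xi\neq 1$, and summing the dimensions over $\xi\in\Char(K)$ gives precisely formula~\eqref{formulabetti}.

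The main obstacle is the first step: rigorously identifying each isotypic summand of $C_*(X_H,\CC)$ with the twisted chain complex $C_*(X,L_{i_H(\xi)})$ on $X$. This requires choosing a $CW$-lift (or a set-theoretic fundamental domain) of the $K$-cover and checking compatibility of the boundary maps with the $\CC[K]$-structure; this is standard finite-group-cover machinery but merits the detailed treatment the authors announce. The remaining ingredient, the equality $\dim H^1(X,L_{\chi})=d(\chi)$ for $\chi\neq 1$, is purely algebraic: it follows from the fact that localizing the presentation~\eqref{presentation} at a non-trivial character and specializing $\Phi$ at that character has cokernel of dimension equal to the depth determined by the vanishing order of the Fitting minors, and that at non-trivial characters no spurious contribution from the zero-th Fitting level (present only at the augmentation ideal $\chi=1$) appears. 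Once both ingredients are in hand, the formula is immediate.
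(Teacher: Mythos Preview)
Your argument is correct and takes a genuinely different, more elementary route than the paper. The paper does not decompose $C_*(X_H)$ via the $K$-action and Shapiro's lemma; instead it passes to the universal abelian cover $X_{\ab}$, applies the Lyndon--Hochschild--Serre five-term sequence
\[
H_2(L,\CC)\to H_1(X_{\ab})_L\to H_1(X_H)\to H_1(L,\CC)\to 0
\]
for the action of $L=H/\pi_1'(X)$ on $X_{\ab}$, and then tensors the presentation~\eqref{presentation} of $H_1(X_{\ab})$ down to $\CC[K]$. The virtue of the paper's approach is that it works directly with the Fitting-ideal definition of depth: the cokernel of the reduced presentation over $\CC[K]$, localized at each $\xi\in\Char(K)$, has dimension $d(i_H(\xi))$ \emph{by Definition~\ref{charvardef}}, and a separate diagram chase identifies the contribution of $\xi=1$ with the image of $H_2(L,\CC)$ in the five-term sequence. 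Your route avoids the spectral sequence entirely but pays for it by invoking Theorem~\ref{otherdepthdef} to pass from twisted cohomology to Fitting depth; the paper's proof is self-contained in this respect and does not rely on that equivalence. One small slip worth flagging: universal coefficients gives $\dim H_1(X,L_\chi)=\dim H^1(X,L_{\chi^{-1}})=d(\chi^{-1})$ rather than $d(\chi)$; this is harmless for the formula since $\xi\mapsto\xi^{-1}$ permutes $\Char(K)\setminus\{1\}$ and leaves the sum in~\eqref{formulabetti} unchanged.
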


\begin{proof}
Consider
the five term
exact sequence corresponding to the spectral sequence: 
\begin{equation}
 E_2^{p,q}=H_p(L,H_q(X_{\ab},\CC)) \Longrightarrow H_{p+q}(X_H,\CC)
\end{equation}
which is the spectral sequence for the free action of the group $L:=H/\pi_1'(X)$ 
on the universal abelian cover $X_{\ab}$. It yields: 
% \begin{equation}\label{fiveterms} 
% H_2(L,\CC) \rightarrow H_1(X_{\ab})_{L} \rightarrow 
%H_1(X_H) \rightarrow H_1(L,\CC) \rightarrow 0 
% \end{equation} 
\begin{equation}\label{fiveterms} 
%\begin{center} 
\begin{tikzpicture}[description/.style={fill=white,inner sep=2pt},baseline=(current bounding box.center)] 
\matrix (m) [matrix of math nodes, row sep=3em, 
column sep=2.5em, text height=1.5ex, text depth=0.25ex] 
{ H_2(L,\CC)& H_1(X_{\ab})_{L} & H_1(X_H)&H_1(L,\CC) &0\\ }; 
%\path[->,font=\normalsize] 
\path[->,>=angle 90](m-1-1) edge node[auto] {} (m-1-2); 
\path[->,>=angle 90](m-1-2)edge node[auto,swap] {}(m-1-3); 
\path[->,>=angle 90](m-1-3)edge node[auto,swap] {}(m-1-4); 
\path[->,>=angle 90](m-1-4)edge node[auto,swap] {}(m-1-5); 
\end{tikzpicture} 
%\end{center} 
\end{equation} 
(subscript in second left term denotes covariants). Next, after taking 
the tensor 
product of sequence~\eqref{presentation} 
with the group ring $\CC[H/\pi_1'(X)]=\CC[L]$, using for a $\CC[L]$-module 
$M$ the identification of 
the covariants 
$M_{\CC[L]}$ with $M \otimes_{\CC[L]} \CC$ applied 
to the second term in~\ref{fiveterms} and finally using the isomorphism: 
\begin{equation} 
(\CC[\pi_1(X)/\pi_1'(X)])^s \otimes_{\CC[L]} \CC\!=\!\! 
(\CC[\pi_1(X)/\pi_1'(X)]/I_{\CC[L]})^s\!=\!\!(\CC[K])^s 
\end{equation}
%\begin{equation}\label{albanesediagram}
%\begin{center} 
%\begin{tikzpicture}[description/.style=
%{fill=white,inner sep=2pt},baseline=(current bounding box.center)] 
%\matrix (m) [matrix of math nodes, row sep=3em, 
%column sep=1.2em, text height=1.5ex, text depth=0.25ex] 
%{0& \ker\left(\bigoplus_i H^0(\cO_{D_i})\rightarrow 
%H^1(\Omega^1_{\bar X})\right)^* &
%H^0(\Omega^1_{\bar X}(\log D))^* &
%H^0(\Omega^1_{\bar X})^*& 0 \\ 
%0 & \coker\left(H_2(\bar X,\ZZ) \rightarrow \bigoplus_i H_0(D_i,\ZZ)\right)&
%H_1(X,\ZZ) & H_1(\bar X,\ZZ)& 0\\}; 
%\path[->,font=\normalsize] 
%\path[->,>=angle 90](m-1-1) edge node[auto] {} (m-1-2); 
%\path[->,>=angle 90](m-1-2) edge node[auto] {} (m-1-3); 
%\path[->,>=angle 90](m-1-3) edge node[auto] {} (m-1-4); 
%\path[->,>=angle 90](m-1-4) edge node[auto] {} (m-1-5); 
%\path[->,>=angle 90](m-2-1) edge node[auto] {} (m-2-2); 
%\path[->,>=angle 90](m-2-2) edge node[auto] {} (m-2-3); 
%\path[->,>=angle 90](m-2-3) edge node[auto] {} (m-2-4); 
%\path[->,>=angle 90](m-2-4) edge node[auto] {} (m-2-5); 
%\path[->,>=angle 90](m-1-2)edge node[auto,swap] {}(m-2-2); 
%\path[->,>=angle 90](m-1-3)edge node[auto,swap] {}(m-2-3); 
%\path[->,>=angle 90](m-1-4)edge node[auto,swap] {}(m-2-4); 
%\end{tikzpicture} 
%\end{center} 
%\end{equation} 
(here $I_{\CC[L]}$ is the augmentation ideal) 
one obtains: 
% \begin{equation}\label{quotientresolution} 
% \CC[K]^m \rightarrow \CC[K]^n \rightarrow
% H_1(X_{\ab},\CC)_{L} \rightarrow 0
% \rightarrow 0
% \end{equation} 
\begin{equation}\label{quotientresolution} 
%\begin{center} 
\begin{tikzpicture}[description/.style={fill=white,inner sep=2pt},baseline=(current bounding box.center)] 
\matrix (m) [matrix of math nodes, row sep=3em, 
column sep=2.5em, text height=1.5ex, text depth=0.25ex] 
{ \CC[K]^m& \CC[K]^n & H_1(X_{\ab},\CC)_{L}&0\\ }; 
%\path[->,font=\normalsize] 
\path[->,>=angle 90](m-1-1) edge node[auto] {} (m-1-2); 
\path[->,>=angle 90](m-1-2)edge node[auto,swap] {}(m-1-3); 
\path[->,>=angle 90](m-1-3)edge node[auto,swap] {}(m-1-4);
\end{tikzpicture} 
%\end{center} 
\end{equation} 
Since $\Spec \CC[K]$ has a canonical identification with
$\Char(K)$ and the dimension of the cokernel of the left
homomorphism in~\eqref{quotientresolution} is the sum of cokernels of
localizations of~\eqref{quotientresolution}
at the maximal ideal of $\xi \in \Char(K) \subset \Char(X)$, 
the dimension of cokernel in~\eqref{quotientresolution} 
is equal to $\sum_{\xi \in \Char(K)} d(i_H(\xi))$.
To conclude the proof we will show that contribution of the character
$\xi=1$ in the last sum is equal to the dimension of the image of the
left homomorphism in~\eqref{fiveterms} and that the right term
in~\eqref{fiveterms} is equal to $b_1(X)$. 
Indeed, since
$b_1(X)=\rk \pi_1(X)/\pi_1'(X)$,
%\otimes \CC)$, 
$\dim H_1(L,\CC)=\rk L$ %\otimes \CC$
and the group
$K$ is finite the second claim follows. 
The first one follows from consideration of the commutative
square obtained by taking morphism of the sequence~\eqref{fiveterms} 
into similar five term sequence
%corresponding to $H=\pi_1(X)$: 
replacing $H$ by $\pi_1(X)$: 
% \begin{equation} 
% \begin{matrix}
% H_2(L,\CC)&\rightarrow & H_1(X_{\ab})_L\\
% \downarrow &&\downarrow \\
% H_2(\pi_1(X)/\pi_1'(X),\CC) 
% &\rightarrow & H_1(X_{\ab})_{\pi_1(X)/\pi_1'(X)} 
% \end{matrix} 
% \end{equation} 
\begin{equation} 
%\begin{center} 
\begin{tikzpicture}[description/.style={fill=white,inner sep=2pt},baseline=(current bounding box.center)] 
\matrix (m) [matrix of math nodes, row sep=3em, 
column sep=2.5em, text height=1.5ex, text depth=0.25ex] 
{ H_2(L,\CC)& H_1(X_{\ab})_L \\ 
H_2(\pi_1(X)/\pi_1'(X),\CC)&H_1(X_{\ab})_{\pi_1(X)/\pi_1'(X)}.\\}; 
%\path[->,font=\normalsize] 
\path[->,>=angle 90](m-1-1) edge node[auto] {} (m-1-2); 
\path[->,>=angle 90](m-1-1)edge node[auto,swap] {}(m-2-1); 
\path[->,>=angle 90](m-2-1)edge node[auto,swap] {}(m-2-2); 
\path[->,>=angle 90](m-1-2)edge node[auto,swap] {}(m-2-2); 
\end{tikzpicture} 
%\end{center} 
\end{equation} 
The left vertical arrow is isomorphism (again since $K$ is finite) 
and the right vertical
arrow is surjection which is the isomorphism
over the contribution of the trivial character in $H_2(L,\CC)$. 
Hence the identity~\eqref{formulabetti} is verified. 
\end{proof}

Definition~\ref{charvardef} allows algorithmic calculation 
of the characteristic varieties (using Fox calculus)
provided a presentation of the fundamental group is known. 
See for example~\cite{pisa,suciu-translated}
for explicit cases of such calculations.

On the other hand one has the following interpretation 
using local systems~(\cite{eko,charvar}). 
Recall that a (rank $n$) local system is a ($n$-dimensional)
linear representation of the fundamental group $\pi_1(X)$.
For treatment of the local systems and their cohomology 
we shall refer to~\cite{deligne}.

A topological definition of the cohomology of rank one 
local systems can be given as follows. 
If $X$ is a finite $CW$-complex, $\chi$ is a 
character of $\pi_1(X)$
and $X_{\ab}$ is the universal abelian cover,
then one can define the twisted cohomology $H^k(X,\chi)$ as the cohomology of the complex:
\begin{equation}
\ldots\longrightarrow C^k(X_{\ab}) \otimes_{\CC[H_1(X,\ZZ)]} \CC_{\chi}
\buildrel {\partial^k \otimes 1} \over 
\longrightarrow\ldots 
\end{equation}
where $C^k(X_{\ab})$ is the $\CC$-vector space of $i$-cochains of $X_{\ab}$ 
considered as
a module over the group ring  of $H_1(X,\ZZ)$ and 
the  $\CC_{\chi}$ is the one dimensional 
$\CC$-vector space with the 
$\CC[H_1(X,\ZZ)]$-module structure given by the character~$\chi$.
If $X$ is a smooth manifold, $H^*(X,\chi)$ has a de~Rham description (cf.~\cite{deligne}).
The homology of a local system can be described using the dual chain complex.
We have the following:

\begin{theorem}\label{otherdepthdef} 
If $\chi \ne 1$, then 
\begin{equation}
d(\chi)=\dim H^1(X,\chi). 
\end{equation}
\end{theorem}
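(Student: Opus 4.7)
\emph{Overall strategy.} The plan is to identify $\dim H^1(X,\chi)$ with the dimension of the fiber $M\otimes_R \CC_\chi$ of the module $M := H_1(X_{\ab},\CC)$ at the closed point $\chi \in \Char(X) = \Spec R$, where $R := \CC[H_1(X,\ZZ)]$. Because $\dim M\otimes_R \CC_\chi$ is by Nakayama the minimum number of local generators of $M$ at $\chi$---equivalently the largest $k$ for which $\bigwedge^k M$ is supported at $\chi$, i.e.\ $d(\chi)$ in the sense of Definition~\ref{charvardef}---this will finish the proof. The hypothesis $\chi\neq 1$ enters precisely to kill the correction coming from $H_0(X_{\ab},\CC)$.

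\emph{Setup and spectral sequence.} Lifting a CW structure on $X$ to $X_{\ab}$ produces a complex $C_*(X_{\ab},\CC)$ of finitely generated free $R$-modules with the property that $C_*(X_{\ab},\CC)\otimes_R N$ computes $H_*(X,N)$ for any $R$-module $N$. Since the twisted cochain complex computing $H^*(X,\chi)$ is the $\CC$-linear transpose of the twisted chain complex, $\dim H^1(X,\chi) = \dim H_1(X,\chi)$, so it suffices to show $d(\chi) = \dim H_1(X,\chi)$. The freeness of $C_*(X_{\ab},\CC)$ yields the hyper-$\mathrm{Tor}$ spectral sequence
\begin{equation*}
E^2_{p,q} = \mathrm{Tor}^R_p\bigl(H_q(X_{\ab},\CC),\,\CC_\chi\bigr) \ \Longrightarrow\ H_{p+q}(X,\chi),
\end{equation*}
whose potential contributions to $H_1(X,\chi)$ are $E^2_{0,1} = M\otimes_R\CC_\chi$ and $E^2_{1,0} = \mathrm{Tor}^R_1(H_0(X_{\ab},\CC),\CC_\chi)$.

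\emph{Vanishing for $\chi\neq 1$.} Since $X_{\ab}$ is connected, $H_0(X_{\ab},\CC) = \CC$ with $R$-action given by the augmentation $R\to\CC$. Pick a $\ZZ$-basis $t_1,\dots,t_r$ of $H_1(X,\ZZ)$; then the Koszul complex on $t_1-1,\dots,t_r-1$ is a free $R$-resolution of $\CC$. Specializing it at $\chi$ produces the Koszul complex on the scalars $\chi(t_1)-1,\dots,\chi(t_r)-1 \in \CC$. Because $\chi\neq 1$, at least one of these scalars is a unit in $\CC$, so the specialized Koszul complex is acyclic. Hence $\mathrm{Tor}^R_p(\CC,\CC_\chi) = 0$ for every $p\geq 0$, so $E^2_{p,0}=0$ for all $p$, all differentials into and out of the line $E^\bullet_{0,1}$ vanish, and $H_1(X,\chi) \cong M\otimes_R\CC_\chi$.

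\emph{Conclusion and main obstacle.} Tensoring the presentation~\eqref{presentation} with $\CC_\chi$ gives the right-exact sequence $\CC^m \xrightarrow{\Phi(\chi)} \CC^n \to M\otimes_R\CC_\chi \to 0$, so $\dim(M\otimes_R\CC_\chi) = n-\rk\,\Phi(\chi)$; by Nakayama this is the minimal number of generators of $M$ locally at $\chi$, hence the largest $k$ with $\chi \in V_k = \mathrm{Supp}(\bigwedge^k M)$, namely $d(\chi)$. The crux is the Koszul-acyclicity step, which is the exact place the hypothesis $\chi\neq 1$ is used: at $\chi=1$ the specialized Koszul differentials are all zero, giving nontrivial $\mathrm{Tor}$ contributions of dimensions $\binom{r}{p}$ which account for the discrepancy visible in Theorem~\ref{bettinumberofcover} between $b_1$ of a cover and the sum of depths over nontrivial characters.
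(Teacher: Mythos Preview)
The paper does not supply a proof of this theorem; it is stated with a citation to \cite{eko,charvar}. Your argument is the standard one and is correct in substance: the hyper-$\mathrm{Tor}$ spectral sequence for $C_*(X_{\ab},\CC)\otimes_R\CC_\chi$, together with the Koszul resolution of the trivial $R$-module $\CC$ over $R=\CC[\ZZ^r]$, shows that all $\mathrm{Tor}^R_p(\CC,\CC_\chi)$ vanish when $\chi\neq 1$, whence $H_1(X,\chi)\cong M\otimes_R\CC_\chi$; the dimension of the latter is the minimal number of local generators of $M$ at $\chi$ by Nakayama, i.e.\ the depth. This is exactly the mechanism the paper uses implicitly in the proof of Theorem~\ref{bettinumberofcover}, where the covariants $H_1(X_{\ab})_L$ are decomposed character by character.

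One point to tighten: your identification $\dim H^1(X,\chi)=\dim H_1(X,\chi)$ via ``$\CC$-linear transpose'' hides a convention. With the usual one, dualizing the twisted chain complex replaces $\chi$ by $\chi^{-1}$, so what one actually gets is $\dim H^1(X,\chi)=\dim H_1(X,\chi^{-1})=\dim M\otimes_R\CC_{\chi^{-1}}$. To finish you then need either to fix conventions so that no inversion appears, or to note that $d(\chi)=d(\chi^{-1})$. The latter holds in the quasi-projective setting (the characteristic varieties are unions of torsion-translated subtori by Theorem~\ref{arapurath}, hence stable under inversion), but is not automatic for an arbitrary finite CW complex. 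This is a routine wrinkle and does not affect the essential correctness of your proof.
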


The connection with the cohomology of local systems allows one to apply general
techniques on cohomology with twisted coefficients, which yield the following 
results on the structure of characteristic varieties:

\begin{theorem}[\cite{arapura}]\label{arapurath} 
Each $V_k(X)$ is a finite union of cosets of subgroups of $\Char(X)$.
Moreover, for each component $V$ of $V_k(X)$ having a positive dimension 
there is a map $f: X \rightarrow C$ where $C$ is a quasi-projective curve
such that $V$ is a coset of the subgroup $f^*H^1(C,\CC^*)\subset\Char(X)$. 
\end{theorem}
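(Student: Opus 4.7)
The plan is to follow Arapura's strategy, combining Hodge theory for rank one local systems with Simpson's description of the Betti moduli space. The essential insight is to first analyze the local/tangential structure of $V_k(X)$ via Hodge theory so as to produce maps onto curves, and then to invoke arithmetic rigidity to upgrade local inclusions into the global coset structure.

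First I would study $V_k(X)$ near a unitary character $\chi$, which is permissible since every component contains a unitary point (Simpson). On a smooth compactification $\overline{X}\supset X$ with normal crossing divisor $D=\overline{X}\setminus X$, Deligne's logarithmic de Rham complex together with a unitary bundle $L$ representing $\chi$ computes $H^{\ast}(X,\chi)$ and endows it with a canonical mixed Hodge structure. Differentiating the locus $V_k$ at $\chi$ produces the Green--Lazarsfeld--Esnault--Viehweg cup product
\begin{equation*}
\omega\colon H^0(\overline{X},\Omega^1_{\overline{X}}(\log D)\otimes L)\otimes H^0(\overline{X},L^{-1})\longrightarrow H^1(X,\chi),
\end{equation*}
whose degeneracy controls the tangent cone. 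The key geometric input is then the generalized Castelnuovo--de~Franchis theorem (in the Beauville--Catanese form, extended to the quasi-projective/log setting): whenever $\omega$ vanishes on a $2$-dimensional isotropic subspace, the forms involved descend along a holomorphic pencil $f\colon X\to C$ onto a quasi-projective curve $C$ with $e(C)<0$. Applied at a generic unitary point of a positive-dimensional component $V\subset V_k(X)$, this furnishes the pencil $f$ and the inclusion $V\subset \rho\cdot f^{\ast}\Char(\pi_1(C))$ for some $\rho$.

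To obtain the reverse inclusion I would use the Leray spectral sequence for $f$: for any $\chi'\in \rho\cdot f^{\ast}\Char(\pi_1(C))$ one gets an embedding of $H^1(C,\rho^{-1}\chi')$ into $H^1(X,\chi')$, and since $e(C)<0$ the dimension is at least $k$, so the whole coset lies in $V_k(X)$. Finiteness and the \emph{subgroup} (rather than merely algebraic subset) structure then come from Simpson's theorem that the Betti moduli space $M_B(\pi_1(X))$ admits a canonical integral structure preserved by the jump loci. Combining this with the fact that the local tangent cone produced above is linear at every unitary point of the component forces each component to agree with a subtorus locally, hence globally, at which point Simpson's rigidity pins the translation down to a character of finite order in the compact case and to an appropriate coset in general.

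The main obstacle is this final upgrade from an irreducible algebraic subvariety of $\Char(X)$ to a coset of a subtorus. Locally the Hodge-theoretic analysis only delivers an analytic subvariety tangent to a linear subspace at each unitary point; translating this into a genuine subgroup statement requires both the density of unitary points in each component (a consequence of Simpson's correspondence between Betti, de Rham and Dolbeault moduli) and the arithmetic rigidity of $M_B$ under its natural $\QQ$-structure. The quasi-projective case additionally demands careful control of the boundary divisor $D$ in the log de Rham computation, which is where the difference between compact and non-compact targets $C$ in the Euler-characteristic dichotomy ultimately originates.
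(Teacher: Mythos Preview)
The paper does not give its own proof of this theorem; it is quoted verbatim as a result of Arapura \cite{arapura} and used as a black box, so there is nothing to compare your argument against here.

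That said, your outline is a reasonable caricature of Arapura's original strategy, but a few of the invocations are off in ways that would matter if you tried to fill in details. The displayed cup product you write,
\[
H^0\bigl(\overline{X},\Omega^1_{\overline{X}}(\log D)\otimes L\bigr)\otimes H^0(\overline{X},L^{-1})\longrightarrow H^1(X,\chi),
\]
is not the derivative of the jump locus: the tangent cone to $V_k$ at $\chi$ is governed by the cup product $H^1(X,\CC)\otimes H^i(X,\chi)\to H^{i+1}(X,\chi)$, and it is the isotropy of a subspace of $H^1(X,\CC)$ (identified via Hodge theory with logarithmic $1$-forms) that feeds into Castelnuovo--de~Franchis, not a pairing with $H^0(L^{-1})$. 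Also, the appeal to ``Simpson's correspondence'' and the ``density of unitary points in each component'' is exactly the step that is delicate in the quasi-projective setting and is the genuine content of Arapura's paper: Simpson's results are for the projective case, and the extension to open varieties (where local systems need not be semisimple and the Dolbeault side involves parabolic/filtered Higgs bundles) is not a formality. Your last paragraph acknowledges this as ``the main obstacle'' but then resolves it by citing the very theorem you are trying to prove.
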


\begin{theorem}[\cite{acm-arapura}]\label{thm-structure}
Let $V$ be an irreducible component of $V_k(X)$. 
Then one of the two following statements holds:
\begin{enumerate}
\enet{\rm(\arabic{enumi})}
\item\label{thm-structure-orb} 
There exists an orbicurve $\cC$, a surjective orbifold morphism $f:X\to \cC$ 
and an irreducible component $W$ of $V_k^{\orb}(\cC)$ such that $V=f^*(W)$.
\item\label{thm-structure-tors} $V$ is an isolated torsion point not of type~\ref{thm-structure-orb}.
\end{enumerate}
\end{theorem}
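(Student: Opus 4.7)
The plan is to bootstrap from Arapura's theorem (Theorem~\ref{arapurath}) to capture the orbifold refinement in case~\ref{thm-structure-orb} and to treat the isolated case by a separate torsion argument. First I would reduce to the positive-dimensional setting: since $V_k(X)\subset V_1(X)$, any positive-dimensional component $V$ of $V_k(X)$ lies in some component of $V_1(X)$, so Arapura produces a holomorphic map $f_0\colon X\to C_0$ onto a smooth quasi-projective curve $C_0$ with negative Euler characteristic such that $V\subset \rho\cdot f_0^*\Char(C_0)$ for a torsion translation $\rho$. After Stein factorization we may assume $f_0$ has connected generic fiber.

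Next I would endow $C_0$ with the orbifold structure $\cC$ whose stabilizer at $p\in C_0$ is the cyclic group of order $m_p$, where $m_p$ is the multiplicity of the scheme-theoretic fiber $f_0^{-1}(p)$ (this is $>1$ at only finitely many points). The main assertion is then that $f_0$ lifts to an orbifold morphism $f\colon X\to\cC$, and that the pullback $f^*\colon\Char(\pi_1^{\orb}(\cC))\to\Char(X)$ has image precisely $\rho\cdot f_0^*\Char(C_0)$. The key input is the short exact sequence
\begin{equation}
1\longrightarrow K \longrightarrow \pi_1^{\orb}(\cC)\longrightarrow \pi_1(C_0)\longrightarrow 1,
\end{equation}
where $K$ is generated by torsion orbifold meridians, which dualizes to a surjection on character groups with finite kernel. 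I would check that the translation $\rho$ obstructing the component $V$ from passing through the identity is exactly what is needed to prescribe the values of an orbifold character on these meridians, using that each multiple fiber forces the monodromy around it to be an $m_p$-th root of unity.

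To promote this to the statement at the level of $V_k$, I would compute depths on the orbifold side and pull them back. For an orbicurve $\cC$ with negative orbifold Euler characteristic, a standard Mayer--Vietoris/orbifold Hodge argument gives $\dim H^1(\cC,\xi)=-e^{\orb}(\cC)$ for generic torsion characters $\xi$, independently of the specific character, so cosets inside $f^*\Char(\pi_1^{\orb}(\cC))$ have a well-defined generic depth $k'$, and a standard semicontinuity/base-change argument shows that the induced depth on $X$ along $V$ is at least $k'$ and actually equal for the maximal $k$ realizing this coset. The resulting component $W\subset V_{k}^{\orb}(\cC)$ is then an irreducible component by construction, and $V=f^*(W)$.

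For case~\ref{thm-structure-tors}, namely a zero-dimensional component $V=\{\chi\}$, I would invoke the torsion result: any isolated point of a characteristic variety of a smooth quasi-projective variety with $H_1(\bar X,\CC)=0$ is a torsion character (the argument in~\cite{manuscripta}, and in greater generality the Budur--Wang/Simpson structure theorem, shows that every positive-dimensional piece of the Green--Lazarsfeld stratification is a translated subtorus by a torsion point, so isolated points in $V_k$ are forced to be torsion by the same mechanism). If $\chi$ happens to lie on a pullback coset from some orbicurve we place it in~\ref{thm-structure-orb}; otherwise it is of type~\ref{thm-structure-tors}. The main obstacle throughout is the second paragraph: identifying the orbifold structure on the target so that the translation $\rho$ appearing in Arapura's theorem is accounted for by orbifold characters rather than as an extrinsic coset, which requires matching multiplicities of fibers with orders of meridional monodromies and verifying compatibility with the orbifold fundamental group.
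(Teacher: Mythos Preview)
The paper does not prove Theorem~\ref{thm-structure}: it is stated in the Preliminaries section with the citation~\cite{acm-arapura} and no proof is given, just as Theorem~\ref{arapurath} is quoted from~\cite{arapura}. So there is no proof in the paper to compare your proposal against.

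That said, your sketch follows the expected line of argument from~\cite{acm-arapura}, but two points deserve care. First, the image of $f^*\colon\Char^{\orb}(\cC)\to\Char(X)$ is not a single coset $\rho\cdot f_0^*\Char(C_0)$ but a finite union of such cosets (one for each character of the finite group $K$ in your exact sequence that actually lifts through the multiple-fiber monodromy); you need to argue that the particular translation $\rho$ produced by Arapura is among these, which is where the matching of fiber multiplicities with meridional orders is used. Second, the passage from $V_1$ to $V_k$ is the more delicate step: you need that a generic character on the component $W\subset V_k^{\orb}(\cC)$ pulls back to a character of the \emph{same} depth on $X$, not merely depth $\ge k$. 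This is typically obtained from the exact sequence $1\to\pi_1(\text{fiber})\to\pi_1(X)\to\pi_1^{\orb}(\cC)\to 1$ together with a Leray/Hochschild--Serre argument identifying $H^1(X,f^*\xi)$ with $H^1(\cC,\xi)$ for $\xi$ avoiding a resonant locus; your ``semicontinuity/base-change'' remark gestures at this but does not supply it.
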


Recall the definition of $V_k^{\orb}$ after Lemma~\ref{lemma-ofg} and see section~\ref{sec-orbicurves} 
for more details on $V_k^{\orb}(\cC)$ for orbicurves.

\subsection{Alexander polynomial associated with a character} \label{sec-alexpol} 
\mbox{} 
 
A specialization of the characteristic variety of a topological space $X$ of finite type to a
special character of its fundamental group $G:=\pi_1(X)$ can be defined and it is a natural generalization
of the Alexander polynomial to this context. For the sake of simplicity, as at the beginning of
section~\ref{sec-charvar}, we shall assume that an identification 
$G/G'\cong \ZZ^r$ was made. 
 
Let $X$ be a finite $CW$-complex and $\chi\in \Char(G)=(\CC^*)^r$
\footnote{this identification depends in a choice of generators of $H_1(X,\ZZ)$.}
a torsion character, that is $\chi:=(\xi_d^{\varepsilon_1},\dots,\xi_d^{\varepsilon_r})$, where $\xi_d$ is a 
primitive $d$-th root of unity, $0\leq \varepsilon_i< d$, and $d$ is the order of $\chi$. Note that $\chi$ 
determines naturally an epimorphism $\varepsilon:G/G'=\ZZ^r \to \ZZ$ defined as $\varepsilon(e_i):=\varepsilon_i$. 
Let $K_{\e}=\ker \e$, and $K_{\e}'=[K_{\e},K_{\e}]$ be the commutator of $K_{\e}$. By the Hurewicz Theorem 
$M_\chi:=K_{\e}/K_{\e}'$ can be identified with the homology of the infinite cyclic cover of $X$ corresponding 
to $\e$ and hence it can be viewed as a module over the group ring $\Lambda:=\QQ[t^{\pm 1}]$, where $t$ is a 
generator of the Galois group of covering transformations. 
 
\begin{dfn} 
Let $X$ and $\chi$ be as above, then the \emph{Alexander polynomial of $X$} associated with $\chi$ is a
generator of the order of the module $M_\chi$ and will be denoted by~$\Delta_{X,\chi} (t)$. 
\end{dfn} 
 
The following is a direct consequence of the definition and Theorem~\ref{otherdepthdef}. 
 
\begin{prop} 
\label{prop-alex-roots} 
Under the above conditions, if $\chi\neq 1$, then $d(\chi)$ is the multiplicity of the factor $\phi_d(t)$ in
$\Delta_{X,\chi}(t)$, where $\phi_d(t)$ is the cyclotomic polynomial of order~$d$. 
\end{prop}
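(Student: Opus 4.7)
The plan is to combine Theorem~\ref{otherdepthdef} with a standard specialization argument for the Alexander module $M_\chi$ viewed as a module over $\Lambda = \CC[t^{\pm 1}]$. First, Theorem~\ref{otherdepthdef} gives $d(\chi) = \dim_\CC H^1(X, \chi)$; by universal coefficients on the finite CW-complex $X$, together with the Galois invariance of $\Delta_{X,\chi} \in \QQ[t^{\pm 1}]$ (so that all the quantities of interest depend only on the Galois orbit of $\chi$ among primitive $d$-th roots of unity), this coincides with $\dim_\CC H_1(X, \chi)$.

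The character $\chi$ factors as $\pi_1(X) \twoheadrightarrow \ZZ \xrightarrow{\tilde\chi} \CC^*$ with $\tilde\chi(1) = \xi_d$, so the twisted homology of $X$ is computable from the infinite cyclic cover $X_\varepsilon$, whose first homology with complex coefficients is $M_\chi \otimes_\ZZ \CC$. The Wang--Milnor exact sequence for the $\ZZ$-cover $X_\varepsilon \to X$ reads
\[
0 \to (M_\chi \otimes_\ZZ \CC)/(t - \xi_d) \to H_1(X, \chi) \to H_0(X_\varepsilon, \CC)^{t = \xi_d} \to 0,
\]
and the third term vanishes because $H_0(X_\varepsilon, \CC) = \CC$ carries the trivial $t$-action while $\xi_d \neq 1$. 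Therefore $d(\chi) = \dim_\CC (M_\chi \otimes_\ZZ \CC)/(t - \xi_d)$.

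Finally, I would decompose $M_\chi \otimes_\ZZ \CC$ via the structure theorem over the PID $\Lambda$ as $\bigoplus_j \Lambda/(f_j)$. Then $\Delta_{X,\chi}$ agrees with $\prod_j f_j$ up to a unit, and the multiplicity of $\phi_d$ in $\Delta_{X,\chi}$, equal by Galois invariance to the multiplicity of $(t - \xi_d)$, is $\sum_j \mathrm{ord}_{(t - \xi_d)}(f_j)$. Meanwhile $\dim_\CC (M_\chi \otimes_\ZZ \CC)/(t - \xi_d)$ counts the number of summands whose $f_j$ is divisible by $(t - \xi_d)$. The main step---and principal obstacle---is matching these two counts: one needs that each such $f_j$ contains $(t - \xi_d)$ only to the first power, i.e.\ the $t$-action on the $\xi_d$-primary part of $M_\chi$ must be semisimple. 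In the paper's setting (smooth quasi-projective $X$ with $H_1(\overline{X}, \CC) = 0$) this semisimplicity is a standard consequence of Deligne's theory of weights applied to the quasi-projective finite cyclic cover cut out by $\chi$, and with it in hand the proposition follows at once.
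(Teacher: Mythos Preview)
Your route differs from the paper's. The paper gives essentially a two-line argument: it cites \cite[Theorem~2.26]{kike-hiro-survey} to identify $\Delta_{X,\chi}$ with the order of the torsion of the specialization $(G'/G''\otimes\QQ)\otimes_{\Lambda}\Lambda/(t_i-t^{\varepsilon_i})$, then writes ``therefore $d(\chi)$ is the multiplicity of $\xi_d$ as a root of $\Delta_{X,\chi}$,'' and finishes by invoking $\Delta_{X,\chi}\in\QQ[t]$ to pass from $\xi_d$ to $\phi_d$. You instead unpack everything through Theorem~\ref{otherdepthdef}, the Wang sequence, and the PID structure theorem, and in doing so you isolate precisely the step the paper's ``therefore'' suppresses: $\dim_{\CC} M_\chi/(t-\xi_d)$ counts the \emph{number} of elementary divisors $f_j$ divisible by $(t-\xi_d)$, while the root multiplicity in $\Delta_{X,\chi}=\prod_j f_j$ is $\sum_j\mathrm{ord}_{(t-\xi_d)}(f_j)$, and these agree only when the $\xi_d$-primary part of $M_\chi$ is semisimple. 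Your diagnosis is correct and is in fact sharper than the paper's own proof; for an arbitrary finite CW-complex (the generality in which \S\ref{sec-alexpol} is set up) one can have $M_\chi\cong\Lambda/\phi_d(t)^2$, where the two counts differ.

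One caution about your final step. Saying that semisimplicity ``is a standard consequence of Deligne's theory of weights applied to the quasi-projective finite cyclic cover cut out by $\chi$'' overstates what comes for free. The deck transformation on $H^1(X_\chi)$ has finite order and is therefore trivially diagonalizable, but that alone does not force the $t$-action on the \emph{infinite} cyclic cover $M_\chi=H_1(X_\varepsilon,\CC)$ to be semisimple at $\xi_d$: passing from the finite cover (which computes $M_\chi/(t-\xi_d)$) to control of the full Jordan structure of $M_\chi$ at $\xi_d$ is exactly the content in question. In the quasi-projective setting the required semisimplicity does hold, but it is a theorem about Alexander modules of quasi-projective varieties rather than an immediate corollary of the existence of a mixed Hodge structure on $H^1(X_\chi)$; you should either supply the argument (e.g.\ via a comparison of $\chi$-eigenspace dimensions on the finite cover with those on a suitable smooth compactification, together with strictness of weight filtrations) or cite a source that states it in the form you need.
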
 
 
\begin{proof} 
Using the same arguments as in~\cite[Theorem~2.26]{kike-hiro-survey}, the polynomial $\Delta_{X,\chi}$
is the order of the torsion of
$$ 
(G'/G'' \otimes \QQ) \otimes_{\Lambda}\Lambda/ {(t_1-t^{\e(\gamma_1)},\dots,t_r-t^{\e(\gamma_r)})}, 
$$ 
therefore $d(\chi)$ is the multiplicity of $\xi_d$ as a root of $\Delta_{X,\chi}$. Since
$\Delta_{X,\chi}\in \QQ[t]$, the result follows. 
\end{proof}

\subsection{Weight of a character} 
\mbox{} 
 
Now let us assume that $X$ is a smooth quasi-projective variety 
and let $\chi \in \Char(X)$ be a character of finite order. 
Let $X_{\chi}$ be the covering space corresponding to $\ker \chi \subset
\pi_1(X)$. Then $H^1(X_{\chi})$ supports a mixed Hodge structure
with weights $1,2$ (cf.~\cite{delignehodge}). The cyclic
group $\im(\chi)$ acts (freely) on $X_{\chi}$ preserving
both the Hodge and weight filtration. 
\begin{dfn}\label{weightcharacter}
An integer 
$w$ is called a weight of a character $\chi$ if the $\chi$-eigenspace
of a generator $g$ of $\im(\chi)$ acting on $\Gr^W_wH^1(X_{\chi})$ has
a positive dimension. Similarly, $p$ is called a Hodge filtration of $\chi$
if the $\chi$-eigenspace of automorphism of $\Gr^p_FH^1(X_{\chi})$ induced by $g$
has a positive dimension. 
\end{dfn} 

The following gives an expression for the weight of a character 
in terms of finite coverings with arbitrary Galois groups. 
 
\begin{prop}\label{weightonanycover} 
 Let $X_G \rightarrow X$ be an abelian cover of $X$
with a finite covering group $G$. Let $\chi \in \Char(G)$.
Then
\begin{equation}
\dim (W_wH^1(X_G))_{\chi}=\dim W_wH^1(X_{\chi}). 
\end{equation} 
In particular $G$ has non-zero $\chi$-eigenspace on $W_wH^1(X_G)$ 
iff $\chi$ has weight~$w$. 
\end{prop}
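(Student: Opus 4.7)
The plan is to exploit the tower $X_G \to X_\chi \to X$ obtained from the normal subgroup $H := \ker\chi \subset G$ (normal because $G$ is abelian). Write $\bar G := G/H \cong \im(\chi)$, cyclic of order $d$, and let $\bar\chi \in \Char(\bar G)$ be the faithful character induced by $\chi$, so $\chi = \bar\chi \circ (G \twoheadrightarrow \bar G)$. Then $X_\chi = X_G/H$ and $\bar G$ acts faithfully on $X_\chi$ as the deck group over $X$. The equality will follow by matching the $\chi$-isotypical component of $H^1(X_G)$ under $G$ with the $\bar\chi$-eigenspace of $H^1(X_\chi)$ under $\bar G$, in a manner compatible with the mixed Hodge structure.

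First I would recall that a finite group $H$ acting freely and algebraically on a smooth quasi-projective variety $Y$ gives a natural isomorphism $H^*(Y/H,\CC)\cong H^*(Y,\CC)^H$ compatible with the MHS, so in particular $H^1(X_\chi,\CC)=H^1(X_G,\CC)^H$ as sub-MHS. Next, since the deck transformations of $X_G\to X$ are algebraic automorphisms, they induce MHS-endomorphisms of $H^1(X_G,\CC)$; consequently the projectors $e_\psi = \frac{1}{|G|}\sum_{g\in G}\psi(g^{-1})g$, $\psi\in\hat G$, preserve $W_\bullet$, and the isotypical decomposition
\[
H^1(X_G,\CC)=\bigoplus_{\psi\in\hat G} H^1(X_G,\CC)_\psi
\]
refines the weight filtration: $W_w H^1(X_G)=\bigoplus_\psi (W_w H^1(X_G))_\psi$, with the analogous statement for $\Gr^W_w$. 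A $\psi$-eigenvector lies in the $H$-invariants exactly when $\psi|_H\equiv 1$, i.e.\ when $\psi$ is pulled back from $\bar G$; the lifts to $G$ of the $d$ characters of $\bar G$ are precisely $\{1,\chi,\dots,\chi^{d-1}\}$. Hence
\[
H^1(X_\chi,\CC)=\bigoplus_{k=0}^{d-1} H^1(X_G,\CC)_{\chi^k},
\]
and the summand $H^1(X_G,\CC)_{\chi^k}$ is simultaneously the $\bar\chi^k$-eigenspace of $\bar G$ acting on $H^1(X_\chi,\CC)$. Taking $k=1$ and intersecting with $W_w$ yields the dimension equality of the Proposition, and applying the identification to $\Gr^W_w$ together with Definition~\ref{weightcharacter} delivers the ``in particular''.

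The main technical obstacle is verifying that the $G$-isotypical decomposition is compatible with the weight filtration. This reduces to the functoriality of MHS for algebraic automorphisms of smooth quasi-projective varieties (Deligne), applied to the action of the deck group: the idempotents $e_\psi$ may fail to be $\QQ$-rational, but since they act on $H^1(X_G,\CC)=H^1(X_G,\QQ)\otimes\CC$ and each $g\in G$ preserves $W_\bullet\otimes\CC$, so does every $\CC$-linear combination of deck actions. Once this compatibility is in hand, the remainder of the argument is a routine bit of representation theory for the finite abelian group $G$.
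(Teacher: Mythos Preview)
Your argument is correct. The paper itself does not give a detailed proof; it simply remarks that the argument is ``similar to the one in the proof of Theorem~\ref{bettinumberofcover}''. What you have written is the natural direct proof: factor through the intermediate cover $X_G\to X_\chi=X_G/H$ with $H=\ker\chi$, identify $H^1(X_\chi,\CC)$ with $H^1(X_G,\CC)^H$ as mixed Hodge structures, and then use that the isotypical decomposition of $H^1(X_G,\CC)$ under the finite abelian group $G$ is compatible with the weight filtration (because each deck transformation is an algebraic automorphism, hence an MHS morphism, and the idempotents $e_\psi$ are $\CC$-linear combinations of such). The spectral-sequence/covariants machinery of Theorem~\ref{bettinumberofcover} is aimed at the harder Betti-number formula; for the present statement your invariants-based argument is both shorter and more transparent, and it is essentially what the paper's reference amounts to once unpacked.

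One small remark: the displayed equality in the Proposition should be read with a $\chi$-eigenspace on the right-hand side as well, i.e.\ $\dim (W_wH^1(X_{\chi}))_{\bar\chi}$; you have correctly interpreted it this way, and your passage to $\Gr^W_w$ for the ``in particular'' is the right move given Definition~\ref{weightcharacter}.
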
 
 
\begin{proof} The argument is similar to the one in the proof of 
Theorem~\ref{bettinumberofcover}. 
\end{proof}
 
\begin{remark} 
Note that characters might have either no weights or more than one weight. 
In this paper we will be most interested in characters with only one weight, 
namely weight~2. 
\end{remark}

\subsection{Essential, non-essential and essential coordinate 
components}\label{nonessential}
\mbox{} 

The constructions described in the previous section can be applied to the case when 
$X=\PP^2\setminus \bigcup_{i=0}^{r} D_i$, where $D_i$ are irreducible curves. 
In this case $H_1(X,\ZZ)=\ZZ^{r+1}/(d_1,...,d_r)$, where $d_i=\deg D_i$. If the degree
of one of its components, say $D_0$, is equal to one, 
i.e. we have the complement to a plane curve in $\CC^2$, then $H_1(X,\ZZ)$ 
is a free abelian group of rank $r$. Let $D=\bigcup D_i$ denote the (reducible) 
curve in $\CC^2$ formed by irreducible components $D_i$.
One has a preferred surjection
$\pi_1(\CC^2\setminus D) \rightarrow \ZZ^r$ given by the 
linking numbers of a loop representing the element of $\pi_1$ with the 
component~$D_i$:
$$ 
\gamma \mapsto (\ldots,\lk(\gamma,D_i),...\ldots). 
$$
This also yields the identification $\Char(\CC^2\setminus D)=(\CC^*)^r$.

Let $D'$ be a reducible subcurve of $D$ in $\CC^2$, that is $D'\subset D$.
Then (cf.~\cite{charvar}) one has a surjection
$\pi_1(\CC^2\setminus D) \rightarrow \pi_1(\CC^2\setminus D') \rightarrow 1$ and
hence an embedding 
\begin{equation}
i_{D'}: \Char(\CC^2\setminus D') \rightarrow \Char(\CC^2\setminus D).
\end{equation}
The image of $i_{D'}$ is formed by the factors of $(\CC^*)^r$ corresponding 
to the components of $D'$. 
Moreover it was shown in~\cite{charvar}
that if $\chi \in V_k(\CC^2\setminus D')$ then $i_{D'}(\chi) \in V_j$
with~$j \ge k$.

\begin{dfn}[\cite{charvar}]\label{essentialdef} 
The components of the characteristic variety $V_k(D)$ 
obtained as the image of a component of $V_k(D')$ are called \emph{non-essential}.
A component of $V_k$ is called \emph{coordinate} if it belongs to 
$i_{D'}(\Char(\CC^2\setminus D'))$ for some $D' \subsetneq D$.
\end{dfn}

Given a surjection
$\pi_1(\CC^2\setminus \bigcup C_k) \rightarrow G:=\ZZ_{a_1} \oplus\dots\oplus \ZZ_{a_m}$
(for $m \le r$), one can construct the unbranched covering 
space $X_{a_1,...a_m}$ corresponding to the kernel of the above surjection of the 
fundamental group. Moreover, there is a compactification of this unbranched cover and its 
morphism to $\PP^2$ extending the covering map. Though this compactification (branched
cover) is non-unique, its birational class is well defined. In particular 
the first Betti number of this branched cover is well defined. 
A compactification $\bar X_{a_1,...a_m}$
can be selected so that it supports a $G$-action 
extending the action of $G$ on $X_{a_1,...a_m}$. A calculation of 
$H_1(\bar X_{a_1,...a_m}, \CC)$ as a $G$-module is given by the following:

\begin{theorem}[\cite{sakuma}] \label{sakumastatement} 
 For each character $\chi$ of $G=\ZZ_{a_1} \oplus 
...\oplus \ZZ_{a_m}$ let 
\begin{equation}\label{eigenspace1}
W_{\chi}:=\{v \in H_1(\bar X_{a_1,...a_m},\CC) \mid g \cdot v= \chi(g)v 
\ for \ any \ g \in G\}
\end{equation}
be the eigenspace of $G$-action corresponding to the character $\chi$.
Let $D_{\chi}$ be the union of components of $D$ over which 
the character $\chi$ is unramified \footnote{a character 
$\chi \in H^1(\CC^2\setminus D)$ is unramified along a component $D_i \subset D$ 
if for the boundary $\gamma_i \in H_1(\CC^2\setminus D)$ 
of a small disk transversal to $D_i$ one has $\chi(\gamma_i) = 1$.
The characters of $\pi_1(\CC^2\setminus D)$ unramified along $D_i$ can be identified 
with the characters of $\pi_1((\CC^2\setminus D) \cup D_i)$.}.
Then $\dim W_\chi$ is equal to the depth of $\chi$ considered 
as the character of $\pi_1(\CC^2\setminus D_{\chi})$. 
\end{theorem}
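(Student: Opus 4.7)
The plan is to parallel the Cartan--Leray spectral sequence argument of Theorem~\ref{bettinumberofcover}, but refined to keep track of the $G$-action on $\chi$-eigenspaces, and then to control what happens in passing from the unbranched cover $X_{a_1,\ldots,a_m}$ to its smooth $G$-equivariant compactification $\bar X_{a_1,\ldots,a_m}$. I first fix a $G$-equivariant smooth compactification (which exists by equivariant resolution applied to the branched cover of $\PP^2$) so that the ramification locus $R := \bar X_{a_1,\ldots,a_m}\setminus X_{a_1,\ldots,a_m}$ is a $G$-invariant normal-crossings divisor; its components lying over a given $D_i\subset D$ (and over the line at infinity) form a single $G$-orbit, whose stabilizer is the cyclic subgroup generated by $\chi_0(\gamma_i)$ for the tautological character $\chi_0\colon\pi_1(X)\surj G$.

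Next, I would write the Gysin/long-exact sequence of $G$-modules
\begin{equation*}
 H_2(\bar X_{a_1,\ldots,a_m},\CC) \to H_0(R,\CC)(-1) \to H_1(X_{a_1,\ldots,a_m},\CC) \to H_1(\bar X_{a_1,\ldots,a_m},\CC)\to 0,
\end{equation*}
and take $\chi$-isotypical components; since we are in characteristic zero and $G$ is finite, this operation is exact. By Theorem~\ref{bettinumberofcover} applied eigenspace-by-eigenspace (the same spectral sequence argument, with $\CC$ replaced by the one-dimensional $L$-representation $\chi$), the $\chi$-eigenspace of $H_1(X_{a_1,\ldots,a_m},\CC)$ contributes exactly $d(i_H(\chi))$ dimensions, where $i_H(\chi)\in\Char(\CC^2\setminus D)$ is the image of $\chi$ under the surjection $\pi_1(\CC^2\setminus D)\surj G$. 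I would then identify the image of $H_0(R,\CC)$ in $H_1(X_{a_1,\ldots,a_m},\CC)$ with the subspace spanned by meridians $\gamma_i$ of the components $D_i\subset D$, each carrying a $G$-representation which (because the $G$-orbit on $R$ above $D_i$ is a quotient $G/\langle\chi_0(\gamma_i)\rangle$) is exactly $\mathrm{Ind}_{\langle\chi_0(\gamma_i)\rangle}^G\mathbf{1}$.

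By Frobenius reciprocity the $\chi$-eigenspace of this induced representation is one-dimensional when $\chi(\gamma_i)=1$ and zero otherwise; that is, the meridian $\gamma_i$ contributes to $W_\chi$ precisely when $\chi$ is unramified along $D_i$, i.e.\ when $D_i$ is a component of $D_\chi$. Killing these classes in $H_1(X_{a_1,\ldots,a_m},\CC)$ is the same as enlarging the base complement by filling in $D_\chi$, i.e.\ passing from $\CC^2\setminus D$ to $\CC^2\setminus(D\setminus D_\chi)$, which is the minimal open set on which $\chi$ naturally lives as a character; by the coordinate-component compatibility of depth recorded after Definition~\ref{essentialdef}, the $\chi$-depth computed on this smaller complement coincides with the quantity the theorem calls the depth of $\chi$ on $\pi_1(\CC^2\setminus D_\chi)$. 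Combining these yields the asserted equality $\dim W_\chi = d(\chi)$.

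The main obstacle I expect is the bookkeeping in step two: the components of $R$ over a single $D_i$ are permuted transitively by $G$ (rather than pointwise fixed), so one cannot simply read off the $\chi$-eigenspace dimension from the number of components. The correct analysis requires the induced-representation computation above, together with the verification that the boundary map $H_0(R,\CC)(-1)\to H_1(X_{a_1,\ldots,a_m},\CC)$ is injective on each isotypical summand after quotienting by $H_2(\bar X)$; the cleanest way is probably to combine the Leray spectral sequence for the open inclusion $X_{a_1,\ldots,a_m}\hookrightarrow \bar X_{a_1,\ldots,a_m}$ with the Deligne mixed-Hodge-theoretic weight argument used in the proof of Proposition~\ref{weightonanycover}, which also takes care of the line at infinity contribution.
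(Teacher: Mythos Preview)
The paper does not prove this statement: Theorem~\ref{sakumastatement} is quoted from Sakuma~\cite{sakuma} and used as input (for Theorem~\ref{thm-hodge} and the discussion of essential components), so there is no argument in the paper to compare your proposal against.

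On its own merits, your outline is the standard route to Sakuma's formula --- Gysin sequence for the pair $(\bar X_{a_1,\ldots,a_m},X_{a_1,\ldots,a_m})$, pass to $\chi$-isotypical components, identify $H_0(R,\CC)$ with a direct sum of permutation representations and read off the $\chi$-multiplicity via Frobenius reciprocity --- and the pieces you flag as ``bookkeeping'' (transitivity of the $G$-action on the components of $R$ over a given $D_i$, the line at infinity, the image of $H_2(\bar X)$) are exactly the points that need attention. One step is thinner than you make it sound: you conclude by saying that killing the meridian classes coming from the unramified components recovers the depth of $\chi$ on $\CC^2\setminus(D\setminus D_\chi)$, and you justify this by ``the coordinate-component compatibility of depth recorded after Definition~\ref{essentialdef}''. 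That passage only records the \emph{inequality} $d_{\CC^2\setminus D'}(\chi)\le d_{\CC^2\setminus D}(\chi)$ for $D'\subset D$, not that quotienting $H_1(X_{a_1,\ldots,a_m},\CC)_\chi$ by the unramified meridians lands exactly on the smaller depth. The honest argument is to run the same $\chi$-eigenspace computation for the intermediate partial compactification of $X_{a_1,\ldots,a_m}$ obtained by adding back only the components of $R$ lying over $D_\chi$ (which is unramified for the $\chi$-cover, hence kills precisely those meridians and nothing else), and to identify that space with the $\chi$-eigenspace in the relevant abelian cover of $\CC^2\setminus(D\setminus D_\chi)$. This is not hard, but it is the content of the step, and the reference you cite does not supply it.
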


This theorem was used in~\cite{charvar} to 
describe essential components of $\pi_1(\CC^2\setminus D)$ in terms of 
combinatorics of singularities of $D$ and the superabundances 
of the linear systems of curves given by the local type of
singularities, their position on $\PP^2$ and the degree of~$D$. 
 
\section{Albanese varieties of smooth quasi-projective varieties.}
 
Recall (cf.~\cite{serre})
that given a projective variety $X$ there is a canonically
associated abelian variety $\Alb(X)$ and the map $X \rightarrow \Alb(X)$ 
(unique up to a choice of the image of point in $X$) is universal
with respect to the maps into abelian varieties, i.e.
 given an abelian variety $A$ and a morphism
$X \rightarrow A$ there is a unique (up to ambiguity as above)
factorization $X \rightarrow \Alb(X) \rightarrow A$. 
 
This construction
can be extended to the quasi-projective case so that the Albanese variety
is a semiabelian variety which
is universal with respect to the morphisms into algebraic groups. 
For example one can use Deligne's construction of 1-motif
associated to the mixed Hodge structure on cohomology $H^1(X)$
of smooth quasi-projective variety (cf.~\cite{deligne}). 
More precisely one has the following: 

\begin{theorem}\label{albanese}
Let $X$ be a quasi-projective variety which 
is a complement to a divisor with normal crossings in 
a smooth projective variety $\bar X$.

\begin{enumerate} 
\enet{\rm(\arabic{enumi})} 
 \item Then one has exact sequence:
\begin{equation}\label{albanesesequence}
0 \rightarrow \cA \rightarrow \Alb(X) \rightarrow \Alb(\bar X) 
\rightarrow 0
\end{equation}
where $\cA$ is an affine abelian algebraic group 
isomorphic to a product of $\GG_m$.
$Alb(X)$ depends on $X$ functorially i.e. a
 morphism $X_1 \rightarrow X_2$ induces the 
homomorphism $\Alb(X_1) \rightarrow \Alb(X_2)$.

\item
If $\Gamma$ is a finite group of biholomorphic automorphisms
then the sequence \eqref{albanesesequence} 
is compatible with the action of~$\Gamma$.
\end{enumerate}
\end{theorem}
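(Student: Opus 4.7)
My plan is to reduce both statements to Deligne's theory of 1-motives applied to the first cohomology $H^1(X,\ZZ)$ of $X$, together with the standard comparison between the Hodge realization of a 1-motive and the classical Albanese construction.

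First I would recall the mixed Hodge structure on $H^1(X,\ZZ)$ coming from the open embedding $X \hookrightarrow \bar X$ with normal crossings complement $D = \bar X \setminus X$ (cf.~\cite{delignehodge}). Its weight filtration gives a short exact sequence of Hodge structures
$$0 \rightarrow H^1(\bar X,\ZZ) \rightarrow H^1(X,\ZZ) \rightarrow \Gr^W_2 H^1(X,\ZZ) \rightarrow 0,$$
in which $W_1 H^1(X) = H^1(\bar X)$ is pure of weight~$1$ and $\Gr^W_2 H^1(X)$ embeds into $H^0(\tilde D,\ZZ)(-1)$ (with $\tilde D$ the normalization), hence is pure of weight~$2$ and Hodge type $(1,1)$. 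Applying Deligne's formalism, the dual of this mixed Hodge structure is the Hodge realization of a 1-motive $[0 \to G]$ over~$\CC$; I would set $\Alb(X) := G$. The Hodge-theoretic extension of $H^1(\bar X)$ by $\Gr^W_2 H^1(X)$ translates, via the equivalence between 1-motives and their Hodge realizations, into an extension
$$0 \rightarrow \cA \rightarrow \Alb(X) \rightarrow \Alb(\bar X) \rightarrow 0,$$
with $\cA$ the algebraic torus whose cocharacter lattice is the integral structure of $\Gr^W_2 H^1(X)$. Because $\Gr^W_2 H^1(X)$ is pure of type $(1,1)$, the torus $\cA$ carries no nontrivial Hodge filtration and is therefore split, i.e.~isomorphic to a product of copies of~$\GG_m$.

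For the functoriality statement in~(1), a morphism $f : X_1 \to X_2$ of smooth quasi-projective varieties can, after an equivariant log resolution of indeterminacy applied to the closure of the graph, be extended to a morphism $\bar f : \bar X_1 \to \bar X_2$ of smooth compactifications with normal crossings boundaries. The induced map $\bar f^*$ is a morphism of mixed Hodge structures and so, by functoriality of Deligne's 1-motive construction, yields the required homomorphism $\Alb(X_1) \to \Alb(X_2)$. Independence of the choice of $\bar X$ is built into the construction since $H^1(X)$ and its weight filtration are intrinsic invariants of $X$.

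For part~(2), the main input is the existence of a $\Gamma$-equivariant smooth compactification of $X$ with normal crossings boundary; this is provided by the equivariant version of Hironaka's resolution theorem, applied to an arbitrary $\Gamma$-invariant compactification (which exists because $\Gamma$ is finite). With such a $\bar X$ in hand, each $\gamma \in \Gamma$ acts on the exact sequence of mixed Hodge structures above, preserving the weight filtration, and the functoriality already established in~(1) transports this action to an action on the sequence~\eqref{albanesesequence} by morphisms of semiabelian varieties. The main obstacle is not the Hodge-theoretic step but verifying that Deligne's abstract machinery gives precisely the classical Albanese in the projective case and that the $\Gamma$-equivariant normal crossings compactification exists; both points are standard but require care in citation rather than a novel argument.
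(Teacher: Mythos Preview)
Your argument is correct and in fact is the route the paper itself signals just before the statement (``one can use Deligne's construction of 1-motif associated to the mixed Hodge structure on cohomology $H^1(X)$''), but the paper then chooses a different, more explicit path. The paper defines $\Alb(X)$ concretely as $H^0(\bar X,\Omega^1_{\bar X}(\log D))^*/H_1(X,\ZZ)$ with Albanese map $P\mapsto \int_{P_0}^P\omega$, and obtains the exact sequence~\eqref{albanesesequence} by dualizing the residue sequence $0\to\Omega^1_{\bar X}\to\Omega^1_{\bar X}(\log D)\to\bigoplus j_*\cO_{D_i}\to 0$ and comparing it with the homology sequence of the pair $(\bar X,X)$; the snake-type diagram of lattices inside complex vector spaces exhibits the affine kernel and the abelian quotient directly.

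Your abstract approach via the equivalence between 1-motives and polarizable MHS of the right type is cleaner for the bare statement and makes functoriality and $\Gamma$-equivariance almost formal. The paper's explicit construction, on the other hand, produces the Albanese \emph{map} $X\to\Alb(X)$ by integration of log forms, which your argument does not; this map is what is actually used downstream (e.g.\ in the proof of Theorem~\ref{thm-main}\ref{thm-main-part2}, where one needs concrete equivariant morphisms $X_\chi\to\CC^*$ arising as coordinate projections of $\Alb(X_\chi)^-$). A minor slip: the cocharacter lattice of $\cA$ is the dual of $\Gr^W_2 H^1(X,\ZZ)$ (i.e.\ $W_{-2}H_1(X,\ZZ)$), not $\Gr^W_2 H^1(X,\ZZ)$ itself, though of course the ranks agree.
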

 
An explicit construction can be given as follows
(cf. also~\cite{iitaka-logarithmic}). 
Let $\bar X$, as above, be a smooth compactification of $X$ 
such that $\bar X\setminus X=D=\bigcup D_i$ is a union
of smooth divisors having normal crossings. Then: 
\begin{equation} 
\Alb(X)=H^0(\bar X,\Omega^1_{\bar X}(\log D))^*/H_1(X,\ZZ),
\end{equation} 
where the embedding of $H_1(X,\ZZ)$ as a lattice is given by $\gamma (\omega):= 
\int_{\gamma}(\omega)$. The Albanese map is given by 
\begin{equation}\label{albformula} 
 P \mapsto \int_{P_0}^P\omega
\end{equation} 
(here $P_0$ is a fixed point on $X$).
The integral~\eqref{albformula} depends (modulo periods of $\omega$) 
only on the end points of the path since a holomorphic
logarithmic form is closed (cf.~\cite{deligne}).
One has the commutative diagram: 
% \begin{equation}\label{albanesediagram2}
% \begin{matrix} & 0 \rightarrow &
% (\ker \oplus_i H^0(\cO_{D_i})\rightarrow H^1(\Omega^1_{\bar X}))^* 
% & \rightarrow & H^0(\Omega^1_{\bar X}(\log D))^*
% & \rightarrow & H^0(\Omega^1_{\bar X})^* 
% & \rightarrow 0 \cr
% & & \uparrow & & \uparrow& &\uparrow \cr
% & 0 \rightarrow&\coker(H_2(\bar X,\ZZ)
% \rightarrow \oplus H_0(D_i,\ZZ)) 
% & \rightarrow 
% & H_1(X,\ZZ) & \rightarrow & H_1(\bar X,\ZZ) & \rightarrow 0 
% \cr 
% \end{matrix} 
% \end{equation}
% \begin{equation}\label{albanesediagram3}
% \begin{matrix}
% 0 \rightarrow &
% \!\!\ker\left(\bigoplus_i H^0(\cO_{D_i})\rightarrow H^1(\Omega^1_{\bar X})\right)^* &
% \!\!\rightarrow &
% H^0(\Omega^1_{\bar X}(\log D))^*\!\! &
% \rightarrow & 
% H^0(\Omega^1_{\bar X})^*\!\!&
% \!\!\rightarrow 0 \\
% &
% \uparrow & 
% &
% \uparrow&
% & 
% \uparrow & 
% \\
% 0 \rightarrow& 
% \!\!\coker\left(H_2(\bar X,\ZZ) \rightarrow \bigoplus_i H_0(D_i,\ZZ)\right)&
% \!\!\rightarrow&
% H_1(X,\ZZ) \!\!&
% \rightarrow &
% H_1(\bar X,\ZZ)\!\! &
% \!\!\rightarrow 0 
% \end{matrix} 
% \end{equation}
\begin{equation}\label{albanesediagram}
%\begin{center} 
\begin{tikzpicture}[description/.style={fill=white,inner sep=2pt},baseline=(current bounding box.center)] 
\matrix (m) [matrix of math nodes, row sep=3em, 
column sep=1.2em, text height=1.5ex, text depth=0.25ex] 
{0& \ker\left(\bigoplus_i H^0(\cO_{D_i})\rightarrow H^1(\Omega^1_{\bar X})\right)^* &
H^0(\Omega^1_{\bar X}(\log D))^* &
H^0(\Omega^1_{\bar X})^*& 0 \\ 
0 & \coker\left(H_2(\bar X,\ZZ) \rightarrow \bigoplus_i H_0(D_i,\ZZ)\right)&
H_1(X,\ZZ) & H_1(\bar X,\ZZ)& 0\\}; 
%\path[->,font=\normalsize] 
\path[->,>=angle 90](m-1-1) edge node[auto] {} (m-1-2); 
\path[->,>=angle 90](m-1-2) edge node[auto] {} (m-1-3); 
\path[->,>=angle 90](m-1-3) edge node[auto] {} (m-1-4); 
\path[->,>=angle 90](m-1-4) edge node[auto] {} (m-1-5); 
\path[->,>=angle 90](m-2-1) edge node[auto] {} (m-2-2); 
\path[->,>=angle 90](m-2-2) edge node[auto] {} (m-2-3); 
\path[->,>=angle 90](m-2-3) edge node[auto] {} (m-2-4); 
\path[->,>=angle 90](m-2-4) edge node[auto] {} (m-2-5); 
\path[->,>=angle 90](m-2-2)edge node[auto,swap] {}(m-1-2); 
\path[->,>=angle 90](m-2-3)edge node[auto,swap] {}(m-1-3); 
\path[->,>=angle 90](m-2-4)edge node[auto,swap] {}(m-1-4); 
\end{tikzpicture} 
%\end{center} 
\end{equation} 
In this diagram the upper row is dual to
the exact cohomology sequence corresponding
to the sequence of sheaves given by the residue map
(cf.~\cite[(3.1.5.2)]{deligne};
below $j: D_i \rightarrow \bar X$): 
\begin{equation} 
 0 \rightarrow \Omega^1_{\bar X} \rightarrow \Omega^1_{\bar X}(\log D) 
\stackrel{\bigoplus_i \res_{D_i}}{\longrightarrow}
\bigoplus_i j_*\cO_{D_i} \rightarrow 0.
\end{equation} 
The lower row is the exact sequence of the pair $(\bar X,X)$ in which 
we used the identification:
\begin{equation} 
H_2(\bar X,X,\ZZ)=H^{2 \dim D}(D,\ZZ)= 
\bigoplus_i H^{2 \dim D}(D_i,\ZZ)=\bigoplus_i H_0(D_i,\ZZ).
\end{equation}
One verifies that all vertical arrows are injective and the image of
each provides the lattice in the corresponding complex vector space 
in the upper row. The rank of the lattice which is the image of
the right (resp. left) 
vertical row is equal to the real (resp. complex) dimension of the
target\footnote{Note that the fact that one uses the \emph{real} 
dimension of $H^0(\Omega^1_{\bar X})$ is that
$\rk H_1(X,\ZZ)=\dim_{\CC}H^1(\cO_{\bar X})+\dim H^0(\Omega^1_{\bar X}(\log D))$ 
which follows from the degeneration of the Hodge-deRham spectral sequence 
in the $E_1$-term.}. 
In the case of the left arrow, one uses that the dual map to 
$H_2(\bar X,\CC)
\rightarrow \bigoplus_i H_0(D_i,\CC)$
factors as
$$ 
\bigoplus_i H^0(D_i,\CC) \rightarrow H^{1,1}(\bar X) \rightarrow
H^{2}(\bar X,\CC). 
$$ 
Hence the quotient of the right (resp. left) injection is
a complex torus (resp. affine algebraic group isomorphic to a product of
several copies of $\CC^*$). This complex torus is isomorphic
to the Albanese variety of $\bar X$ by the classical construction. 
The remaining assertions of the Theorem~\ref{albanese} 
follow from the description of the Albanese map given by~\eqref{albformula}. 
 
Finally note that Theorem~\ref{albanese} implies the following:
 
\begin{cor}\label{involutiondecomp} 
 Let $\phi$ be an involution of $X$, $\phi_*$ be the
corresponding automorphism of $\Alb(X)$. 
Let $\Alb(X)^-=\{v \in \Alb(X) \mid i \phi_*(v)=-v\}$ and 
$\Alb(X)^+=\{v \in \Alb(X) \mid i \phi_*(v)=v\}$. 
Then one has an isogeny: 
\begin{equation} 
\Alb(X)=\Alb(X)^-\oplus \Alb(X)^+ 
\end{equation} 
\end{cor}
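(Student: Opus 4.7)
The plan is to deduce the isogeny from three ingredients: the functoriality of $\Alb$ from Theorem~\ref{albanese}, the fact that multiplication by $2$ is an isogeny on any semi-abelian variety, and the elementary linear algebra of involutions. I interpret $\Alb(X)^{\pm}$ as $\ker(\phi_*\mp\mathrm{id}_{\Alb(X)})$, which are closed algebraic subgroups of the semi-abelian variety $\Alb(X)$.

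First I would note that since $\phi$ is an involution of $X$, Theorem~\ref{albanese}\ref{thm-structure-tors} (functoriality) gives $\phi_*\circ\phi_*=(\phi\circ\phi)_*=\mathrm{id}_{\Alb(X)}$, so $\phi_*$ is an involutive endomorphism of $\Alb(X)$. Consequently $\Alb(X)^{+}$ and $\Alb(X)^{-}$ are well-defined closed subgroups of $\Alb(X)$, and the addition morphism
\[
\sigma\colon \Alb(X)^{+}\oplus \Alb(X)^{-}\longrightarrow \Alb(X),\qquad (a,b)\mapsto a+b,
\]
is a homomorphism of algebraic groups.

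Next I would compute $\ker\sigma$. If $\sigma(a,b)=0$ then $b=-a$, and the conditions $\phi_*(a)=a$ and $\phi_*(b)=-b$ together give $\phi_*(a)=a$ and $\phi_*(a)=-a$, hence $2a=0$. Thus $\ker\sigma$ is contained in the $2$-torsion of $\Alb(X)^{+}\cap\Alb(X)^{-}$, which is finite because the $2$-torsion of the semi-abelian variety $\Alb(X)$ is finite (both the abelian quotient $\Alb(\bar X)$ and the affine part $\cA\cong\GG_m^{\,s}$ have finite $2$-torsion).

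For surjectivity I would argue that for every $v\in\Alb(X)$ the elements $v+\phi_*(v)$ and $v-\phi_*(v)$ lie in $\Alb(X)^{+}$ and $\Alb(X)^{-}$ respectively, and their sum is $2v$. Hence $\operatorname{im}\sigma\supseteq 2\,\Alb(X)$. The multiplication-by-$2$ map is surjective on the abelian variety $\Alb(\bar X)$ and on the algebraic torus $\cA$, so by the snake lemma applied to the exact sequence~\eqref{albanesesequence} it is surjective on $\Alb(X)$. Thus $\sigma$ is surjective with finite kernel, i.e.~an isogeny.

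The only subtle step is verifying that the $2$-torsion of $\Alb(X)$ is finite and that $[2]$ is surjective on $\Alb(X)$; both follow at once from the structure of $\Alb(X)$ as an extension of an abelian variety by $\GG_m^{\,s}$ provided by Theorem~\ref{albanese}\ref{thm-structure-orb}, so no serious obstacle arises.
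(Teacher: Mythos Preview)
Your argument is correct and is exactly the standard way one deduces such an isogeny decomposition for an involution on a semi-abelian variety. The paper itself gives no proof of this corollary: it simply states that Theorem~\ref{albanese} implies it, leaving the verification to the reader. Your write-up supplies precisely those details (functoriality gives $\phi_*^2=\mathrm{id}$; the addition map $\Alb(X)^+\oplus\Alb(X)^-\to\Alb(X)$ has kernel contained in the $2$-torsion and image containing $2\,\Alb(X)$; both finiteness of $2$-torsion and surjectivity of $[2]$ follow from the extension structure~\eqref{albanesesequence}).

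One small correction: the labels \texttt{thm-structure-orb} and \texttt{thm-structure-tors} that you cite belong to Theorem~\ref{thm-structure}, not to Theorem~\ref{albanese}. The two numbered parts of Theorem~\ref{albanese} carry no individual labels in the paper, so you should refer to them simply as Theorem~\ref{albanese}(1) (the exact sequence and functoriality) and Theorem~\ref{albanese}(2) (equivariance). This is a purely bibliographic slip; the mathematics you invoke is the right content from Theorem~\ref{albanese}.
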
 
 
\section{Orbifold pencils and characters of fundamental groups 
of quasi-projective manifolds} 

\subsection{Orbicurves} \label{sec-orbicurves} 
 
\begin{dfn} 
\label{def-global}
An \emph{orbicurve} is a complex orbifold of dimension equal to one 
\footnote{i.e. a smooth complex curve with a collection $R$ of points 
(called {\it the orbifold points}) 
with a multiplicity assigned to each point in $R$. The complement 
to $R$ is called {\it the regular part} of the orbifold.}.
An orbicurve $\cC$ is called a \emph{global quotient} if there exists a finite group $G$
and a manifold $C$ such that $\cC$ is the quotient of $C$ by $G$
with standard orbifold structure.

A \emph{marking} on an orbicurve $\cC$ (resp. a quasi-projective variety $\cal X$) is a character of 
its orbifold fundamental group
\footnote{Recall (cf.~\cite{adem})
that the orbifold fundamental group $\pi_1^{\orb}(\cC)$ 
of an orbifold $\cC$ is defined as the
quotient $\pi_1(\cC\setminus \cR)$ by the normal closure of the elements
$\gamma_{p_i}^{m(p_i)}$ where $m(p_i)$ is the
multiplicity of an orbifold point $p_i$ and $\gamma_i$ is a meridian of $p_i$.} 
(resp. its fundamental group) that is, an element of 
$\Char^{\orb}(\cC):=\Hom(\pi_1^\orb(\cC),\CC^*)$ (resp. $\Char(X)$)
(warning: this terminology is different from the one used in~\cite{adem}).

A \emph{marked orbicurve} is a pair $(\cC,\rho)$, where $\cC$ is an orbicurve and $\rho$
is a marking on~$\cC$. More generally, 
one defines a \emph{marked quasi-projective manifold} as a pair $(X,\chi)$ 
consisting of a quasi-projective manifold $X$ and a character of its 
fundamental group. From now on, all characters used as marking below will be assumed to 
have a finite order. 

A marked \emph{global quotient} is a marked orbicurve $(\cC,\rho)$
such that if $C\setminus R \rightarrow \cC\setminus \cR$ is the unbranched cover 
corresponding to the global quotient $C \rightarrow \cC$,
with $R$ being the set of fixed points of non-identity elements of the
covering group and $\cR$ being its image (or equivalently the
set of orbifold points), then one has 
\begin{equation} 
\pi_1(C\setminus R)=\ker (\pi_1(\cC\setminus \cR)\to \pi_1^\orb (\cC)
\rightmap{\rho} \CC^*) 
\end{equation} 
In other words, the above cover
$C\setminus R \rightarrow \cC\setminus \cR$ 
is the cover of the
minimal degree over which $\rho$ becomes trivial. 
\end{dfn}
 
\begin{remark} The above existence condition for a marking on a global quotient
implies that the quotient map over the regular part of the orbifold is a
{\it cyclic} cover. 
More precisely, the covering group of the cover of the regular part of
the orbifold via $\rho$ can be identified with $\im(\rho) \subset
\CC^*$ i.e. $\rho$ can be viewed as a character of the covering group. 
If $d$ is the order of this covering group then the
number of possible markings is equal to the value of the Euler function
$\phi(d)$.
\end{remark}

\begin{dfn} Let $\cC$ be a global orbifold quotient and $\rho$ 
a marking. Let $R$ be the set of orbifold points and $C\setminus R \rightarrow
\cC\setminus \cR$ be the quotient map with the covering group $G$. 
The integer 
\begin{equation} 
 d(\rho)=\dim \{v \in H^1(C\setminus R,\CC) \mid g\cdot v=\rho(g)v, \ \ g \in G \} 
\end{equation} 
is called the \emph{depth} of a character $\rho$ of the orbicurve $\cC$.
\end{dfn} 
 
\begin{example}\label{ex-orbifold} Let $\CC_{n,n}$ be the orbifold supported on
$\CC$ with two orbifold points of multiplicity $n$. 
We shall identify $\CC$ with $\PP^1\setminus \{[1:1]\}$ so that the orbifold
points correspond to $[0:1],[1:0]$. 
This is the global quotient of a smooth curve $C$ by the cyclic group 
$\ZZ/n$ where $C$ is the complement in
$\PP^1$ to the set $S:=\{[\xi_n^i:1]\mid i=0,1\dots,n-1\}$ of $n$ points 
(here $\xi_n$ is a primitive root of unity of degree $n$) and
the global quotient map is the restriction on the complement to $S$ 
of the map $\PP^1 \rightarrow \PP^1$ 
given by $z \mapsto z^n$.
We have $\pi_1^{\orb}(\CC_{n,n})=\ZZ/n*\ZZ/n$. Such free product decomposition
implies that $V_1^{\orb}(\CC_{n,n})= 
\mu_n \times \mu_n$, where $\mu_n$ is the multiplicative cyclic group of order $n$. 
Consider a character $\rho \in V_1^{\orb}(\CC_{n,n})$
taking values $\zeta,\zeta^{-1}$ on respective generators of this direct sum 
where $\zeta$ is a primitive root of unity.
It follows that if $\pi_1(\PP^1\setminus \{[1:0],[1:1],[0:1]\}) \rightarrow \pi_1^{\orb}(\CC_{n,n})$ 
is the canonical surjection (in the above
identification of $\CC$ and $\PP^1$ so that
the point at infinity corresponds to $[1:1]$),
then the pullback of $\rho$ takes values $\zeta, 1,\zeta^{-1}$
on generators corresponding to $[1:0],[1:1],[0:1]$.
In particular the covering space corresponding to such $\rho$ is
$\PP^1\setminus S$ and the dimension of the $\rho$-eigenspace is equal to one. 
\end{example}

\subsection{Orbifold pencils}\label{quasiprojpencils}

\begin{dfn} 
\label{def-orb-pencil}
Let $\cX$ be a quasi-projective manifold and $\cC$ be an orbicurve. 
A holomorphic map $\phi$ between $\cX$ and the underlying $\cC$ complex curve
is called an \emph{orbifold pencil} if the index of each orbifold point $p$ 
divides the multiplicity of each connected component of the fiber $\phi^*(p)$
over~$p$.
\end{dfn}

\begin{remark}
Note that this definition implies that if $\Gamma_i$ is the boundary of
a small disk normal to $\phi^{-1}(p_i)$ at its smooth point
then $\phi(\Gamma_i)$ belongs to
the subgroup of $\pi_1(\cC\setminus p_i)$ generated by $\gamma_i^{m(p_i)}$.
In particular an orbifold pencil induces the map
$\pi_1(\cX) \rightarrow \pi_1^{\orb}(\cC)$. 
\end{remark}

\begin{dfn} 
\label{def-orb-marking}
Let $\cal X$ be a quasi-projective variety, $C$ be a quasi-projective
curve and $\cC$ be an orbicurve
which is a global quotient of $C$. A \emph{global quotient orbifold pencil} 
is an orbifold pencil
$\phi: \cal X \rightarrow \cC$ such that there exists a morphism $\Phi: X_G \rightarrow C$,
where $X_G$ is a quasi-projective manifold endowed with an action of the group $G$
which makes the diagram:
% \begin{equation}\label{diagramorbchar}
% \begin{matrix} X_G & \buildrel \Phi \over \rightarrow & C \cr
% \downarrow &&\downarrow \cr
% \cal X& \buildrel \phi \over \rightarrow & \cC \cr 
% \end{matrix} 
% \end{equation}
\begin{equation}\label{diagramorbchar}
%\begin{center} 
\begin{tikzpicture}[description/.style={fill=white,inner sep=2pt},baseline=(current bounding box.center)] 
\matrix (m) [matrix of math nodes, row sep=2.5em, 
column sep=2.5em, text height=1.5ex, text depth=0.25ex] 
{X_G & C \\ 
\mathcal{X} & \cC\\}; 
%\path[->,font=\normalsize] 
\path[->,>=angle 90](m-1-1) edge node[auto] {$\Phi$} (m-1-2); 
\path[->,>=angle 90](m-2-1) edge node[auto] {$\phi$} (m-2-2); 
\path[->,>=angle 90](m-1-1)edge node[auto,swap] {}(m-2-1); 
\path[->,>=angle 90](m-1-2)edge node[auto,swap] {}(m-2-2); 
\end{tikzpicture} 
%\end{center} 
\end{equation} 
commutative, for which the vertical arrows are the quotients by the action of $G$.

If, in addition, $(\cX,\chi)$ and $(\cC,\rho)$ are marked, then the global quotient orbifold pencil 
$\phi: \cal X \rightarrow \cC$ is \emph{marked} if $\chi=\phi^*(\rho)$ 
where $\phi^*: \Char^{\orb}(\cC) \rightarrow \Char(\cX)$
is the homomorphism dual to the surjection $\phi_*: \pi_1(\cX) \rightarrow
\pi_1^{\orb}(\cC)$ corresponding to the orbifold map $\phi$.
We will refer to the map of pairs $\phi:(\cX,\chi)\to (\cC,\rho)$ as a 
\emph{marked global quotient orbifold pencil} in $(\cX,\chi)$ of target $(\cC,\rho)$.
\end{dfn}
 
\begin{remark}\label{pullbackremark} 
Consider $R$ the collection of non-manifold points in $\cC$ and $F=\phi^{-1}(R)$ the collection
of multiple fibers corresponding to $\phi$. The orbifold relation $\chi=\phi^*(\rho)$ takes place
if and only if the following holds: 
\begin{equation} 
\hat \chi =i^*(\chi) \ \ \ \hat \chi=\hat \phi^*(\hat \rho) 
\ \ \hat \rho=p^*(\rho), 
\end{equation} 
for the map of open manifolds $\hat \phi: \cX\setminus F \rightarrow \cC\setminus R$,
induced by $\phi$, where $i: \cX\setminus \phi^{-1}(R) \rightarrow \cX$ is the embedding
and $p: \pi_1(\cC\setminus R) \rightarrow \pi_1^{\orb}(\cC)$ is the canonical projection.
\end{remark} 
 
\begin{lemma}\label{extensionexistence} 
Let $\cX$ be a quasi-projective manifold, $\cC$ a 
marked global quotient with marking $\rho \in \Char^{\orb}(\cC)$. 
If $\phi: \cX \rightarrow \cC$ is an orbifold map such that 
$\chi=\phi^*(\rho)$ then $\phi$ is a global quotient orbifold pencil 
i.e. $\phi$ can be extended to a commutative diagram~\eqref{diagramorbchar}. 
\end{lemma}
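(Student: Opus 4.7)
The plan is to construct $X_G$ as the étale covering of $\cX$ determined by the character $\chi$, and then build $\Phi$ in two steps: first on the regular locus using the fiber-product description supplied by Remark~\ref{pullbackremark}, and then extend holomorphically across the preimages of the orbifold points using the orbifold pencil condition of Definition~\ref{def-orb-pencil}.

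For the construction of $X_G$: let $G=\im\rho\subset\CC^*$ so that the unramified map $C\setminus R\to\cC\setminus\cR$ corresponds to $\ker\rho\subset\pi_1^{\orb}(\cC)$. Because $\chi=\rho\circ\phi_*$, one has $\ker\chi=\phi_*^{-1}(\ker\rho)$, so (assuming $\phi_*$ is surjective, the general case being handled by a disjoint union of $\im\chi$-covers indexed by $G/\im\chi$) the finite étale cover $X_G\to\cX$ associated with $\ker\chi$ is smooth, quasi-projective, and carries a $G$-action with quotient $\cX$. Now set $\hat\cX:=\cX\setminus\phi^{-1}(\cR)$, $\hat\cC:=\cC\setminus\cR$, $\hat C:=C\setminus R$. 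The pullback $\hat\cX\times_{\hat\cC}\hat C\to\hat\cX$ is a $G$-étale cover classified, by Remark~\ref{pullbackremark}, by $\hat\phi^*(\hat\rho)=i^*(\chi)$, hence canonically $G$-equivariantly isomorphic to $X_G|_{\hat\cX}$. The second projection yields a holomorphic $G$-equivariant map $\Phi^{\circ}:X_G|_{\hat\cX}\to\hat C$ making the outer square of \eqref{diagramorbchar} commute on the regular locus.

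The main step is to extend $\Phi^{\circ}$ across $X_G\setminus X_G|_{\hat\cX}$. Work locally near a component $F_0$ of $\phi^{-1}(p)$ for an orbifold point $p$ of multiplicity $m$. Choose local coordinates $(x_1,\dots,x_n)$ on $\cX$ with $F_0=\{x_1=0\}$, a local coordinate $t$ on $\cC$ at $p$, and a local coordinate $w$ on $C$ above $p$ with $t=w^m$. The orbifold pencil condition forces $t\circ\phi=u(x)\,x_1^{km}$ for some unit $u$ and some integer $k\ge 1$. Since $X_G\to\cX$ is étale, pulling back yields local coordinates $(y_1,\dots,y_n)$ on $X_G$ with $y_1=x_1$. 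On the punctured neighborhood $\Phi^{\circ}(y)^m=u(y)\,y_1^{km}$, which forces $\Phi^{\circ}(y)=v(y)\,y_1^k$ for a holomorphic $m$-th root $v$ of $u$ (it exists because $u$ is a non-vanishing unit on a contractible polydisk); this formula extends holomorphically across $\{y_1=0\}$. The local extensions agree on overlaps by the identity principle, so they glue to a global holomorphic $\Phi:X_G\to C$. By continuity, $\Phi$ inherits $G$-equivariance from $\Phi^{\circ}$ and makes diagram \eqref{diagramorbchar} commute, which is what was to be shown.

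The decisive point, and the only nontrivial one, is the local extension across $F_0$: it is precisely the divisibility $m\mid\mathrm{mult}(F_0)$ built into the definition of an orbifold pencil that allows extraction of a \emph{single-valued} holomorphic $m$-th root and hence a holomorphic (rather than merely meromorphic or multivalued) lift.
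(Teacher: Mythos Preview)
Your argument is essentially correct and follows a genuinely different route from the paper's.  Both proofs construct $X_G$ as the cover of $\cX$ corresponding to $\ker\chi$ and obtain $\Phi^{\circ}$ on $X_G\setminus\pi_G^{-1}(F)$ from the relation $\hat\chi=\hat\phi^*(\hat\rho)$ of Remark~\ref{pullbackremark}.  The difference is in the extension step.  The paper argues topologically: it shows that any two small loops $\tilde\Gamma_P,\tilde\Gamma_{P'}$ in $X_G\setminus\pi_G^{-1}(F)$ about points $P,P'$ on the \emph{same} connected component of $\pi_G^{-1}(F)$ are sent by $\Phi^{\circ}$ to loops about the \emph{same} point of $R\subset C$, by tracking their images in $\pi_1(\cC\setminus\cR)$ and using that the covering group of $C\setminus R\to\cC\setminus\cR$ is a quotient of $\pi_1^{\orb}(\cC)$ (so cosets of $\langle\delta^m\rangle$ label the preimages of an orbifold point).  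You instead do a local analytic computation, writing $t\circ\phi=u\cdot x_1^{km}$ and extracting an $m$-th root.  Both arguments hinge on the same divisibility $m\mid\mathrm{mult}(F_0)$: in the paper this appears as $\phi_*(\Gamma)\in\langle\delta^m\rangle$, in yours as the exponent $km$.  Your approach is more explicit and self-contained; the paper's is more global and makes the role of $\pi_1^{\orb}$ transparent.

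One gap you should close: the local model ``$F_0=\{x_1=0\}$ with $t\circ\phi=u\cdot x_1^{km}$'' is only available at a smooth point of the reduced fiber lying on a single irreducible component.  At intersection points of two components of $\phi^{-1}(p)$, or at singular points of a component, no such coordinates exist.  Since $X_G\to\cX$ is \'etale and $\cX$ is smooth, this bad locus has codimension $\ge 2$ in $X_G$; once you have extended $\Phi^{\circ}$ over the generic points of each component as you do, a Hartogs/Riemann extension argument (embedding $C$ into its smooth compactification $\bar C$ to get boundedness, then checking the extended value lands in $C$) finishes the job.  Alternatively, at a normal-crossing point your computation generalizes verbatim to $t\circ\phi=u\prod_i x_i^{k_i m}$, which also suffices.
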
 
 
\begin{proof} Let, as above, $\pi_G: X_G \rightarrow \cX$ be the covering space
corresponding to $\ker \chi \subset \pi_1(\cX)$.
Since, as follows from Remark~\ref{pullbackremark},
$\hat {\chi}=\hat \phi^*(\hat \rho)$ 
 one has the commutative diagram: 
% \begin{equation} 
% \begin{matrix}
% X_G\setminus \pi_G^{-1}(F) & \stackrel{\Phi}{\rightarrow}& C\setminus R \\
%{\scriptstyle {i^*(\pi_G)}} \downarrow &&\downarrow \\
% \cal X\setminus F& \stackrel{\phi}{\rightarrow} & \cC\setminus \cR
% \end{matrix} 
% \end{equation} 
 \begin{equation} 
%\begin{center} 
\begin{tikzpicture}[description/.style={fill=white,inner sep=2pt},baseline=(current bounding box.center)] 
\matrix (m) [matrix of math nodes, row sep=3em, 
column sep=2.5em, text height=1.5ex, text depth=0.25ex] 
{ X_G\setminus \pi_G^{-1}(F)& & C\setminus R \\ 
\mathcal{X}\setminus F&& \cC\setminus \cR \\ }; 
%\path[->,font=\normalsize] 
\path[->,>=angle 90](m-1-1) edge node[auto] {$\Phi $} (m-1-3); 
\path[->,>=angle 90](m-1-1)edge node[auto,swap] {$i^*(\pi_G) $}(m-2-1); 
\path[->,>=angle 90](m-1-3)edge node[auto] {}(m-2-3); 
\path[->,>=angle 90](m-2-1)edge node[auto] {$\phi$}(m-2-3); 
% edge node[auto] {$ \rho_{\cC_0} $} (m-2-2) 
% (m-1-3) edge node[auto] {$ \rho_{\cC_1} $} (m-2-2); 
\end{tikzpicture} 
%\end{center} 
\end{equation} 
where $i^*(\pi_G)$ is the restriction of $\pi_G$ onto $X_G\setminus \pi_G^{-1}(F)$. 
We have to verify that $\Phi$ extends to the map $X_G \rightarrow C$ 
i.e. that is any pair of small loops
$\tilde \Gamma_P,\tilde \Gamma_P' \in \pi_1(X_G\setminus \pi_G^{-1}(F))$
about
points $P,P'$ in a connected component $F'$ 
of $\pi_G^{-1}(F)$ is mapped by $\Phi$ into a pair small loops
$\tilde \delta, \tilde \delta'$,
about the same point in $R\subset C$.
The images of loops $\tilde \delta, \tilde \delta'$ in
$\cC\setminus \cR$ are homotopic and hence they will be homotopic in
$C\setminus R$ iff $\tilde \delta^{-1} \cdot \tilde \delta'$ belongs 
to the subgroup of $\pi_1(\cC\setminus \cR)$ generated by $\delta^m$ 
(since preimages of points of $\cR$ in $C$ correspond to
cosets in the covering group of $C\setminus R$ over $\cC\setminus \cR$ of the
subgroup generated by the image of $\delta^m$). 
So we claim that $\phi_* \circ (i^*(\pi_G))_*(\tilde \Gamma_P), 
\phi_* \circ (i^*(\pi_G))_*(\tilde \Gamma_{P'})$ belong to the
same coset of $(\delta^m)$. 
Indeed, since $P,P'$ belong to a connected curve $F'$, 
$(i^*(\pi_G))_*(\tilde \Gamma_P^{-1}\tilde \Gamma_P') \in \pi_1(\partial F)$ 
where $\partial F$ is the boundary of a small neighborhood of
a component of $F$. Since one has an exact sequence 
$(\Gamma) \rightarrow \pi_1(\partial F) \rightarrow \pi_1(F) \rightarrow 0$, 
where $\Gamma$ is a small loop about the component of $F$,
and the image of a lift into $\partial F$ of 
a loop in a component of $F$ is trivial in $\cC$, 
the image of $\pi_1(\partial F)$ in $\pi_1(\cC)$
belongs to the subgroup generated by $\phi_*(\Gamma)=\delta^m$ and hence
images of $\tilde \Gamma_P, \tilde \Gamma_{P'}$ are in the same coset
of $\delta^m$ as was claimed. 
\end{proof} 
 
\begin{remark} The key point in the above argument is that 
images of $\Gamma_P,\Gamma_{P'}$ are the same in $\pi_1^{\orb}(\cC)$ 
and that the covering group of $C\setminus R \rightarrow \cC\setminus \cR$ is the
quotient of the latter. 
\end{remark} 
 
\begin{dfn}
\label{def-indep}
Global quotient orbifold pencils $\phi_i:(\cal X,\chi) \rightarrow (\cC,\rho)$, $i=1,...,n$
are called \emph{independent} if the induced maps $\Phi_i: X_G \rightarrow C$ 
%where $X_G$ is
% defined as the push out constructed using $\phi_i$:
% \begin{equation}
% \begin{matrix} X_G & \buildrel \Phi_i \over \rightarrow & C_G \cr
% \downarrow &&\downarrow \cr
% \cal X& \buildrel \phi_i \over \rightarrow & \cC \cr 
% \end{matrix} 
% \end{equation}
constructed in Lemma~\ref{extensionexistence} 
define $\ZZ[G]$-independent morphisms of modules
\begin{equation}
{\Phi_i}_*: H_1(X_G,\ZZ) \rightarrow H_1(C,\ZZ).
\end{equation}

In addition, if $\bigoplus_i {\Phi_i}_*: H_1(X_G,\ZZ) \rightarrow H_1(C,\ZZ)^n$ is surjective we say that 
the pencils $\phi_i$ are \emph{strongly independent}.
\end{dfn}

\begin{remark}
Note that if either $n=1$ or $H_1(C,\ZZ)=\ZZ[G]$, then independence is
equivalent to surjectivity of $\bigoplus_i {\Phi_i}_* \otimes \QQ$
since the matrix of the latter has as its columns the vectors
corresponding to ${\Phi_i}_*$. 
\end{remark}

This definition is motivated by the following:

\begin{prop} 
\label{prop-indep}
Let $\cC$ be an orbicurve which is a global $G$-quotient of 
the algebraic group $\cA=\CC^*$. The global quotient orbifold pencils 
$\phi_i: \cal X \rightarrow \cC$, $i=1,...,r$ on a global quotient
orbifold $\cX=X_G/G$ such that the first Betti number of
a smooth compactification of $X_G$ is zero,
are independent in the sense of Definition{\rm~\ref{def-indep}} if and only
if they define $\ZZ$-independent 
elements of the abelian group $\Mor_G(X_G,\cA)$ of equivariant morphisms. 
\end{prop}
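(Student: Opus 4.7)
The plan is to use the Albanese construction as a bridge, translating morphisms from $X_G$ to the algebraic group $\cA$ into lattice homomorphisms on first homology, and then checking that the two notions of independence coincide under this dictionary.

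First I would invoke Theorem~\ref{albanese}: each $G$-equivariant morphism $\Phi_i : X_G \to \cA = \CC^*$ factors uniquely (up to translation in $\cA$) as $\Phi_i = \psi_i \circ \alpha$, with $\alpha : X_G \to \Alb(X_G)$ the Albanese map and $\psi_i$ a homomorphism of algebraic groups. By the functoriality of the Albanese in Theorem~\ref{albanese}(2), $G$-equivariance of $\Phi_i$ forces $G$-equivariance of $\psi_i$, via the uniqueness clause of the universal property.

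Next I would exploit the hypothesis $b_1(\bar X_G) = 0$. This forces $\Alb(\bar X_G) = 0$, so by the sequence~\eqref{albanesesequence} the variety $\Alb(X_G)$ is an affine torus $(\GG_m)^k$. For split tori, a homomorphism of algebraic groups is determined by the induced map on cocharacter lattices, yielding a natural $G$-equivariant identification
\begin{equation*}
\Hom^{\mathrm{alg}}(\Alb(X_G), \cA) \;\cong\; \Hom_{\ZZ}(H_1(\Alb(X_G), \ZZ), H_1(\cA, \ZZ)).
\end{equation*}
Composing with the Albanese map, which by construction induces an isomorphism on $H_1$ modulo torsion, and passing to $G$-invariants gives an injective homomorphism of abelian groups
\begin{equation*}
\Mor_G(X_G, \cA) / \cA^G \;\longrightarrow\; \Hom_{\ZZ[G]}(H_1(X_G, \ZZ), H_1(\cA, \ZZ)), \qquad \Phi_i \longmapsto (\Phi_i)_{*},
\end{equation*}
which converts multiplicative products of pencils into sums of homological maps.

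Finally, I would compare the two independence conditions through this identification. As the remark after Definition~\ref{def-indep} explicates, independence of the $\phi_i$ amounts to a $\QQ[G]$-rank condition on the tuple $((\Phi_i)_*)$; since these elements already lie in the $G$-invariant submodule and the target $H_1(\cA, \ZZ) = \ZZ$ has rank one, any $\ZZ[G]$-relation pushes down via the augmentation to a $\ZZ$-relation and conversely, so the condition is equivalent to plain $\ZZ$-linear independence of the $(\Phi_i)_*$. The dictionary above then matches this with $\ZZ$-independence of the $\Phi_i$ in $\Mor_G(X_G, \cA)$ modulo the finite group $\cA^G$ of $G$-invariant constants, yielding the claimed equivalence. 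The main subtlety lies in verifying the $G$-equivariance of the Albanese factorization and keeping careful track of the constant ambiguity in passing from $\Mor_G(X_G, \cA)$ to its image in the lattice Hom.
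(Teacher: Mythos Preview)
Your approach is essentially the same as the paper's: invoke the universal property of the Albanese, use the hypothesis $b_1(\bar X_G)=0$ to conclude $\Alb(X_G)$ is an affine torus, and then identify equivariant algebraic-group maps to $\CC^*$ with equivariant lattice maps on $H_1$. The paper compresses this into three sentences; you unpack the same steps with more attention to the constant ambiguity and to equivariance of the factorization.

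One caveat concerns your final paragraph. The argument that ``any $\ZZ[G]$-relation pushes down via the augmentation to a $\ZZ$-relation and conversely'' does not quite work as stated. The $(\Phi_i)_*$ are $G$-equivariant, hence $G$-fixed under the conjugation action on $\Hom_\ZZ(H_1(X_G),H_1(\cA))$; for a $G$-fixed element $x$ one has $r\cdot x=\epsilon(r)\,x$, so a $\ZZ[G]$-relation whose coefficients all have augmentation zero collapses to the trivial $\ZZ$-relation. Indeed any single nonzero $G$-fixed element already satisfies the nontrivial $\ZZ[G]$-relation $(1-g)\cdot x=0$. The reading of Definition~\ref{def-indep} that is consistent with the paper's own proof (and with the remark following that definition) is that ``$\ZZ[G]$-independent morphisms'' means $\ZZ$-linearly independent elements of the $\ZZ$-module $\Hom_{\ZZ[G]}(H_1(X_G,\ZZ),H_1(C,\ZZ))$. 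Under that reading the equivalence is immediate from your Albanese dictionary, and the augmentation step is unnecessary.
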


\begin{proof}
Note that
$\Mor_G(X_G,\CC^*)=\Hom_G(\Alb(X_G),\CC^*)$ by the
universal property of maps from Albanese 
into an algebraic group. The assumption on the first Betti number of
compactification of $X_G$ yields that $\Alb(X_G)$ is
a torus of dimension $b_1(X_G)$.
Also $\Hom_G(\Alb(X_G),\CC^*)=\Hom(H_1(X_G;\ZZ),H_1(\CC^*;\ZZ))$
holds equivariantly and the claim follows.
\end{proof}

\begin{remark} In \cite{mordweil} it was shown that the
Proposition~\ref{prop-indep} is also true for special $\cal X$ 
in cases when
 $\cA$ is a certain elliptic curve (depending on the Alexander polynomial 
of $\cal X$). 
This is so if $\cal X$ is the complement of a cuspidal curve $C$ in $\PP^2$. 
In this case 
$\Mor_G(X_G,\cA)$ can be identified with the Mordell-Weil group 
of $K$-points 
of the elliptic curve over $\CC$ admitting an automorphism of 
order 6 where $K$ is 
the field of rational functions on the 6-fold cover of $\PP^2$ ramified along
the curve i.e. the degree six extension of $\CC(x,y)$.
 This is also the case for $\delta$-curves discussed in ~\cite{mordweil}.
\end{remark}
 
On the other hand, in the case of orbifolds with a trivial orbifold structure
there are very few marked orbifold pencils. 
 
\begin{lemma} Let $\phi_i: (\cX,\chi) \rightarrow (C,\rho)$ be a collection
of strongly independent marked 
pencils. Assume that $C$ has the trivial orbifold structure. 
Then $\rho=1$ and hence $\chi=1$. 
\end{lemma}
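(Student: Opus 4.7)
The plan is to argue by contradiction. Suppose $\rho\neq 1$, and let $G:=\im(\rho)$, a non-trivial finite cyclic group of order $d$. Since $C$ has trivial orbifold structure, the marked global quotient data for $(C,\rho)$ yields an unramified Galois $G$-cover $\tilde C\to C$ with $\pi_1(\tilde C)=\ker(\rho)$, on which $G$ acts freely. By Lemma~\ref{extensionexistence}, each pencil $\phi_i$ lifts to a $G$-equivariant morphism $\Phi_i:X_G\to\tilde C$, and strong independence provides the surjection
$$
\bigoplus_i\Phi_{i*}:H_1(X_G,\ZZ)\to H_1(\tilde C,\ZZ)^n.
$$

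Since each $\Phi_{i*}$ is $\ZZ[G]$-equivariant, over $\CC$ it respects the isotypic decomposition of both sides under the characters of $G$, and the surjection restricts to each isotypic component. The marking condition $\chi=\phi_i^*(\rho)$, via the identification $G=\im(\chi)=\im(\rho)\hookrightarrow\CC^*$, pairs the $\chi$-isotypic piece of $H_1(X_G,\CC)$ with the $\rho$-isotypic piece of $H_1(\tilde C,\CC)$; the trivial-character summand reduces to the underlying statement that $\bigoplus_i\phi_{i*}\colon H_1(\cX,\CC)\to H_1(C,\CC)^n$ is surjective.

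The key step, and the main obstacle, is to exploit the trivial orbifold structure on $C$ to rule out $\rho\neq 1$. Following the Albanese perspective of Proposition~\ref{prop-indep} and Theorem~\ref{albanese}, the lifts $\Phi_i$ correspond to independent elements of $\Mor_G(X_G,\tilde C)$, equivalently to $G$-equivariant maps $\Alb(X_G)\to\Alb(\tilde C)$. Because $C$ has no orbifold points, the multiplicity conditions on the fibers of $\phi_i$ are vacuous and $\tilde C\to C$ is a purely unramified cover; on the level of Albanese varieties this means $\Alb(\tilde C)$ is the pull-back of $\Alb(C)$ with trivial $G$-action on the cokernel piece. The $G$-equivariance then forces each $\Phi_{i*}$ to factor through the $G$-invariants of the target; but the surjection onto $H_1(\tilde C,\ZZ)^n$, considered on the non-trivial $\rho$-isotypic piece, is incompatible with such factorization unless that isotypic piece vanishes---and pushing this vanishing back through the minimality condition defining the global quotient forces $G$ itself to be trivial, i.e.\ $\rho=1$. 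The conclusion $\chi=\phi_i^*(\rho)=\phi_i^*(1)=1$ is then immediate.
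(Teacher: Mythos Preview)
Your argument breaks down at the ``key step''. The assertion that, because $\tilde C\to C$ is unramified, ``$\Alb(\tilde C)$ is the pull-back of $\Alb(C)$ with trivial $G$-action on the cokernel piece'' is false: unramified cyclic covers of curves generally carry a rich $G$-action on $H_1$; in fact the $\rho$-eigenspace of $H_1(\tilde C,\CC)$ has dimension $d(\rho)=\dim H^1(C,\rho)$, which is typically positive. Consequently the next sentence, ``$G$-equivariance then forces each $\Phi_{i*}$ to factor through the $G$-invariants of the target'', does not follow: $G$-equivariance only means $\Phi_{i*}$ respects isotypic components, not that it kills the non-trivial ones. So your contradiction never materializes. (Even the final clause is problematic: vanishing of the $\rho$-eigenspace in $H_1(\tilde C)$ does not by itself force $\rho=1$; take $C=\CC^*$ and the $d$-fold self-cover.)

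Ironically, you already extracted the one fact actually needed: the trivial-isotypic part of the surjection yields that $\bigoplus_i\phi_{i*}:H_1(\cX,\CC)\to H_1(C,\CC)^n$ is surjective. The paper's proof simply dualizes this to get an injection $\Char(C)^n\hookrightarrow\Char(\cX)$ via the product map $(\phi_1,\dots,\phi_n):\cX\to C^n$. The marking condition $\phi_i^*(\rho)=\chi$ says that all the characters $p_i^*(\rho)=(1,\dots,1,\rho,1,\dots,1)\in\Char(C)^n$ (with $\rho$ in slot $i$) pull back to the same $\chi$; injectivity then gives $p_i^*(\rho)=p_j^*(\rho)$ for $i\ne j$, forcing $\rho=1$. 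No cover, no Albanese, no isotypic decomposition beyond the trivial piece is needed. Note that this argument uses $n\ge 2$, which is implicit in the statement.
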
 
 
\begin{proof} Consider the map $(...,\phi_i,...): \cX \rightarrow C^n$ induced by pencils
$\phi$. Independence implies that the induced map $H_1(\cX) \rightarrow H_1(C^n)$
is surjective and hence the dual map of cohomology is injective.
If $p_i: C^n \rightarrow C$ is the projection on the $i$-th factor, then
$p_i^*(\rho)=(...,\rho,...)$ (the non-identity component is on the $i$-th coordinate). 
The compatibility condition together with the injectivity of $H^1(C,\CC^*) \rightarrow H^1(\cX)$ 
implies that $p_i^*(\rho)=\pi_j^*(\rho)$ and hence~$\rho=1$. 
\end{proof} 

\begin{lemma} 
Let $\phi$ be a marked orbifold pencil of $(\cal X,\chi)$ with target $(\cC,\rho)$ having connected fibers. 
If $\cC$ is a global quotient of a curve $C$ then $\phi$ is a marked global quotient orbifold pencil.
\end{lemma}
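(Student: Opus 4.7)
The plan is to reduce the statement to Lemma~\ref{extensionexistence}, which already constructs the extension diagram~\eqref{diagramorbchar} provided $(\cC,\rho)$ is a \emph{marked} global quotient. The residual task is therefore to upgrade the bare hypothesis ``$\cC$ is a global quotient of $C$'' to the marked version, and this is precisely where the connected fibers assumption enters.

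First I would show that connected fibers force the induced homomorphism $\phi_*:\pi_1(\cX)\to\pi_1^{\orb}(\cC)$ to be surjective. Over the regular locus $\hat\phi:\cX\setminus F\to\cC\setminus\cR$ (with $F=\phi^{-1}(\cR)$) this is the standard argument: connected fibers of $\hat\phi$ yield surjectivity of $\hat\phi_*:\pi_1(\cX\setminus F)\to\pi_1(\cC\setminus\cR)$. At an orbifold point $p_i\in\cC$ the orbifold pencil condition (Definition~\ref{def-orb-pencil}) guarantees that $m(p_i)$ divides the multiplicity of every connected component of $\phi^{-1}(p_i)$, so meridians about components of $F$ map to powers of $\gamma_{p_i}^{m(p_i)}$ and become trivial in $\pi_1^{\orb}(\cC)$. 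Consequently $\hat\phi_*$ descends to a surjection $\phi_*:\pi_1(\cX)\to\pi_1^{\orb}(\cC)$, $\phi^*$ is injective on characters, and in particular $\im\chi=\im\rho$.

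Next, replacing $C$ if necessary by the intermediate cover of $\cC$ corresponding to $\ker\rho\subset\pi_1(\cC\setminus\cR)$, I may assume $(\cC,\rho)$ is a marked global quotient in the sense of Definition~\ref{def-global}, with cyclic Galois group $G=\im\rho$. The cover $X_G\to\cX$ associated to $\ker\chi$ then has Galois group $\im\chi=G$, so the left vertical arrow in~\eqref{diagramorbchar} is a genuine $G$-quotient map. Lemma~\ref{extensionexistence} now applies and produces the $G$-equivariant morphism $\Phi:X_G\to C$ completing the diagram, showing that $\phi$ is a marked global quotient orbifold pencil. The substantive work — extending $\Phi$ across the branch locus $\pi_G^{-1}(F)$ by matching local monodromies against fiber multiplicities — is already handled inside Lemma~\ref{extensionexistence}; the role of the connected fibers hypothesis here is precisely to secure the equality $\im\chi=\im\rho$, so that $\chi$ cuts out the cover on which the full group $G$, and not merely a quotient of it, acts.
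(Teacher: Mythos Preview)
Your argument is correct and follows essentially the same line as the paper's own proof: both use connected fibers to obtain surjectivity of $\hat\phi_*:\pi_1(\cX\setminus F)\to\pi_1(\cC\setminus\cR)$, then pass to the cyclic cover determined by $\ker\chi$ (respectively $\ker\rho$) and lift $\phi$ across the branch locus. The paper carries out this lift directly---constructing the unbranched cover $\tilde X$ of $\cX\setminus\phi^{-1}(R)$, extending to a branched cover $X\to\cX$, and observing that $\chi=\phi^*(\rho)$ forces the map to lift to $X\to C$---whereas you package the same extension step as an invocation of Lemma~\ref{extensionexistence}, which is a cleaner organization since that lemma already contains the monodromy-matching argument. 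Your explicit remark that surjectivity of $\phi_*$ yields $\im\chi=\im\rho$ (so that the cover $X_G\to\cX$ genuinely has Galois group $G$) is a point the paper's proof leaves implicit. One small caution: the phrase ``intermediate cover'' is slightly inaccurate, since the cyclic cover $C_\rho\to\cC$ associated to $\ker\rho$ need not factor through the originally given global quotient $C\to\cC$; but this is harmless, as what matters is only that $C_\rho$ is the cover with $\pi_1(C_\rho\setminus R)=\ker(\rho\circ p)$, and the paper's proof tacitly works with this same $C_\rho$.
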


\begin{proof}
Let, as above, $\bar{\cal X}$ denotes a smooth compactification of $\cal X$.
Let $R \subset \cC$ be the set of non-manifold points.
 The map $\pi_1(\cal X\setminus \phi^{-1}(R)) \rightarrow \pi_1(\cC\setminus R)$ 
is surjective since the fibers of $\phi$ are connected. 
Let $K$ be the kernel of the homomorphism 
$\pi_1(\cal X\setminus \phi^{-1}(R)) \rightarrow \pi_1(\cal X) \rightarrow \mu_n$,
where the right homomorphism is the character $\chi$.
Consider $\tilde{X}$ the unbranched cover of $\cal X\setminus \phi^{-1}(R)$ 
corresponding to $K$. Extend it to a branched cover $\pi:\bar X \to \bar{\cal X}$ and let
$X:=\pi^{-1} \cal X$. Since $\chi$ is the image of 
$\rho \in \Char^{\orb}(\cC)$, the map 
$\cal X\setminus \phi^{-1}(R) \rightarrow \cC\setminus R$ extends to 
the map of unbranched covering spaces and thus to the map of branched covering spaces
$X \rightarrow C$ presenting the pencil $\phi$ as the global quotient.
\end{proof}

%\subsection{Multiple pencils and depth} 
\subsection{Orbifold pencils, depth, and roots of Alexander polynomials} 
\label{quasiprojpencils-sec-thm} 
\mbox{} 
 
In this section we shall give a proof of Theorem~\ref{thm-main}.
% Since the global quotient orbifolds of an algebraic group are either $\CC_{2,2}$, $\PP^1_{2,3,6}$, 
% $\PP^1_{3,3,3}$, or $\PP^1_{2,4,4}$ ($\PP^1_{2,2,2,2}$???), then by~\cite{mordweil} Theorem~\ref{thm-main}
% reduces to the following: 
 
% \begin{theorem}\label{thm-mainend} 
% Let $\cal X$ be a quasi-projective manifold together with a character $\chi$.
% \begin{enumerate}
% \enet{\rm(\arabic{enumi})} 
% \item\label{thm-main-part1end}
% Assume that there are $n$ strongly independent marked orbifold pencils with a fixed target $(\cC,\rho)$.
% Then 
% $d(\chi) \ge n d(\rho)$
% 
% \item\label{thm-main-part2end}
% If $\chi$ is a character of order two
% such that its weight is equal to 2, then there are exactly
% $d(\chi)$ strongly independent orbifold pencils on $\cX$
% with target the
% global $\ZZ_2$-orbifold
% $\cC=\CC_{2,2}$. These pencils are marked with the character
% $\rho$ of $\pi_1^{\orb}(\CC_{2,2})$ specified by the condition
% that $\rho$is non-trivial
% on both standard generator of the latter group. 
% \end{enumerate}
% \end{theorem} 
 
\begin{proof}[Proof of Theorem{\rm~\ref{thm-main}}]
In order to prove part~\ref{thm-main-part1}, let us consider $\phi_{1,*},\dots,\phi_{n,*}$, 
$n$ strongly independent orbifold pencils. Since
$H_1(X_G;\ZZ) \longrightmap{\bigoplus_i \Phi_{i,*}} H_1(C;\ZZ)^n$ is an equivariant
epimorphism, the dual morphism $H^1(C;\ZZ)_\rho^n \to H_1(X_G;\ZZ)_\chi$ is injective
and hence $\rk H^1(C;\ZZ)_\rho^n=nd(\rho)\leq \rk H_1(X_G;\ZZ)_\chi = d(\chi)$.
 
As for part~\ref{thm-main-part2}, let $\chi$ be a 2-torsion character.
We shall apply the formula for the first
Betti number in Theorem~\ref{bettinumberofcover} to the degree-two covering
$X_{\chi}$ of $\cX$ corresponding to the subgroup
$\ker \chi$ of $\pi_1(\cX)$.
The sum in~\eqref{formulabetti} contains
only one term and yields that $d(\chi)$ is the dimension 
of the $\chi$ eigenspace of $H_1(X_{\chi},\CC)$: 
\begin{equation} 
d(\chi)= \{ v \in H_1(X_{\chi},\CC) \mid g\cdot v=\chi(g) v\}. 
\end{equation} 
Moreover, the action of $g \in \mu_2, g \ne 1$ is the multiplication 
by $-1$. The action of $g$ on $\Alb(X_{\chi})$ induces
the isogeny $\Alb(X_{\chi})=\Alb(X) \oplus \Alb(X_{\chi})^-$ 
where the second summand is the subvariety of $\Alb(X_{\chi})$ of points on
which the covering group acts as multiplication by $-1$
(cf. Corollary~\ref{involutiondecomp}; note that $\Alb(X_{\chi})^+= \Alb(X)$).
Since we assume that $\chi$ has only weight two, $\Alb(X_{\chi})^-$ has no compact part. 
In particular, by Theorem~\ref{albanese}, 
\begin{equation} 
\label{eq-alb} 
\Alb(X_{\chi})^-=\ker(\Alb(X_{\chi}) \rightarrow \Alb(X))=(\CC^*)^{d(\chi)} 
\end{equation} 
and the order two action is given by $z\mapsto z^{-1}$. Therefore the projections give $d(\chi)$ 
independent equivariant (due to Albanese functoriality) maps $X_{\chi} \rightarrow \CC^*$
Since $\Mor_{\ZZ_2}(X_{\chi},\CC^*)=\Hom_{\ZZ_2}(\Alb(X_{\chi}),\CC^*)$ and 
$\Hom_{\ZZ_2}({(\CC^*)}^{d(\chi)},\CC^*)=\Hom(\ZZ^{d(\chi)},\ZZ)$, then
$\rk_{\ZZ} \Mor_{\ZZ_2}(X_{\chi},\CC^*)=d(\chi)$. 
Each map hence descends to an orbifold pencil $\cX \rightarrow \CC_{2,2}$. 
Since one has the commutative diagram: 
% \begin{equation} 
% \begin{matrix} X_\chi\setminus \pi_{\chi}^{-1}(F) & \buildrel \Phi \over \rightarrow &
% \CC^*\setminus \{\pm 1\} \cr
%i^*(\pi_{\chi}) \downarrow &&\pi_{\rho} \downarrow \cr
% \cal X\setminus F& \buildrel \phi \over \rightarrow &
% \CC_{2,2}\setminus \pi_{\rho}(\pm 1) \cr 
% \end{matrix} 
% \end{equation} 
 \begin{equation} 
%\begin{center} 
\begin{tikzpicture}[description/.style={fill=white,inner sep=2pt},baseline=(current bounding box.center)] 
\matrix (m) [matrix of math nodes, row sep=3em, 
column sep=2.5em, text height=1.5ex, text depth=0.25ex] 
{ X_\chi\setminus \pi_{\chi}^{-1}(F)& & \CC^*\setminus \{\pm 1\} \\ 
\cal X\setminus F&& \CC_{2,2}\setminus \pi_{\rho}(\pm 1) \\ }; 
%\path[->,font=\normalsize] 
\path[->,>=angle 90](m-1-1) edge node[auto] {$\Phi $} (m-1-3); 
\path[->,>=angle 90](m-1-1)edge node[auto,swap] {$i^*(\pi_G) $}(m-2-1); 
\path[->,>=angle 90](m-1-3)edge node[auto] {$\pi_{\rho}$}(m-2-3); 
\path[->,>=angle 90](m-2-1)edge node[auto] {$\phi$}(m-2-3); 
% edge node[auto] {$ \rho_{\cC_0} $} (m-2-2) 
% (m-1-3) edge node[auto] {$ \rho_{\cC_1} $} (m-2-2); 
\end{tikzpicture} 
%\end{center} 
\end{equation} 
where $\pi_{\chi}$ (resp. $\pi_{\rho}$) is the projection $X_{\chi} \rightarrow X$ 
(resp. $\CC^*\rightarrow \CC_{2,2}$).
This diagram induces the isomorphism of order two quotients: 
\begin{equation} 
\pi_1(\cX\setminus F)/\pi_1(X_\chi\setminus \pi_{\chi}^{-1}(F))= 
\pi_1(\CC_{2,2}\setminus \pi_{\rho}(\pm 1))/\pi_1( \CC^*\setminus \{\pm 1\}) 
\end{equation}
which shows that the pencils $\cX \rightarrow \CC_{2,2}$ preserve markings. 
 
Finally, we will check that any such pencil $\Phi:X_\chi \to \CC^*$ can be assumed
to have connected fibers and hence the induced morphism $\Phi_*$ on cohomology is 
surjective, which will imply that the $n$ pencils can be found to be strongly independent. 
Consider the induced orbifold pencil $\phi:\bar \cX \rightarrow \PP^1_{2,2}$ and its Stein
factorization $\bar \cX \rightmap{\tilde \phi} S \rightmap{\tilde \sigma} \PP^1_{2,2}$. Since
$\bar \cX$ is a rational surface, one has $S=\PP^1$ with an orbifold 
structure containing at least 
two orbifold points each locally being quotient 
by an order 2 automorphisms.
 The double cover of $S$ ramified along these two orbifold 
points after removing the preimage of the point at infinity by $\tilde \sigma$ induces
maps $X_\chi \rightmap{\Phi'} \CC^* \rightmap{\sigma} \CC^*$ where $\Phi=\sigma \circ \Phi'$ 
and $\Phi'$ has connected fibers. 
 
The \emph{moreover} part is a direct consequence of Proposition~\ref{prop-alex-roots}. 
\end{proof}
 
\section{Pencils on the complements to plane curves
and zero-dimensional components of characteristic varieties}
\label{curvessection}

\subsection{Essential coordinate components and weight}\label{essentialsection} 
\mbox{} 
 
Essential coordinate components were discovered in~\cite{ArtalCogolludo}.
The dimension of the essential coordinate component is zero since 
the pencil $f: \CC^2\setminus D \rightarrow C$ corresponding to a positive 
dimensional coordinate component of $V_i$, can be extended to a map 
$\bar f:\CC^2\setminus D' \rightarrow C$ with $D'\subsetneq C$ 
(cf.~\cite{charvar}). It follows from~\cite{manuscripta} that essential 
coordinate characters have a finite order. 
Now we can give a Hodge-theoretical characterization of such
in terms of weights (cf. Definition~\ref{weightcharacter}).

\begin{theorem} 
\label{thm-hodge} 
Let $X=\CC^2\setminus D$ be the complement to a plane curve $D$.
A character in the characteristic variety of $\pi_1(X)$ is essential and coordinate 
iff it has weight two. 
\end{theorem}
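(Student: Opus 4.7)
The plan is to characterize weight two in terms of a depth jump via the Gysin residue sequence and Sakuma's theorem, and then to match this jump with the essential coordinate condition. Fix $\chi\in\Char(X)$; since coordinate essential characters are torsion by \cite{manuscripta}, I may assume $\chi$ has finite order. Let $D^{\rm ram}\subseteq D$ denote the union of irreducible components of $D$ along which $\chi$ is ramified, so that $\chi$ factors through $\pi_1(\CC^2\setminus D^{\rm ram})$ and $\chi$ is coordinate iff $D^{\rm ram}\subsetneq D$. Let $X_\chi\to X$ be the unbranched cover with covering group $G=\im(\chi)$, and $\bar X_\chi$ a smooth projective compactification with $E:=\bar X_\chi\setminus X_\chi$ a normal crossings divisor; the $G$-action extends to $\bar X_\chi$.

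The first step is the $G$-equivariant Gysin sequence of mixed Hodge structures
\[
0\to H^1(\bar X_\chi)\to H^1(X_\chi)\to H^0(E)(-1)\to H^2(\bar X_\chi),
\]
which identifies $H^1(\bar X_\chi)$ with the pure weight-one part $W_1H^1(X_\chi)$. Taking $\chi$-isotypic components yields
\[
\dim\Gr^W_2 H^1(X_\chi)_\chi \;=\; \dim H^1(X_\chi)_\chi - \dim H^1(\bar X_\chi)_\chi.
\]
The second step is to identify each summand with a depth. Theorem~\ref{otherdepthdef} together with the argument proving Theorem~\ref{bettinumberofcover} gives $\dim H^1(X_\chi)_\chi=d_D(\chi)$, the depth of $\chi$ in $V(\CC^2\setminus D)$. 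Sakuma's Theorem~\ref{sakumastatement}, applied to the branched cover $\bar X_\chi\to\PP^2$, gives $\dim H^1(\bar X_\chi)_\chi=d_{D^{\rm ram}}(\chi)$, the depth of $\chi$ viewed as a character of $\pi_1(\CC^2\setminus D^{\rm ram})$. Combining,
\[
\chi\text{ has weight }2 \;\iff\; d_D(\chi)>d_{D^{\rm ram}}(\chi).
\]

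It remains to match this strict inequality with essential coordinateness. If $D^{\rm ram}=D$ the two depths agree, so $\chi$ is neither weight two nor coordinate. If $D^{\rm ram}\subsetneq D$ then $\chi$ is coordinate; the remark at the start of \S\ref{essentialsection} that essential coordinate components have dimension zero reduces the essentiality analysis to isolated characters. The component $\{\chi\}$ of $V_{d_D(\chi)}(D)$ is non-essential exactly when it arises as $i_{D'}(\{\chi\})$ from an isolated component of $V_{d_D(\chi)}(D')$ for some $D'\subsetneq D$, and by monotonicity of depth under the inclusions $D^{\rm ram}\subseteq D'\subsetneq D$ the minimal choice $D'=D^{\rm ram}$ is the governing one: essential coordinate $\iff d_{D^{\rm ram}}(\chi)<d_D(\chi)$. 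The main technical obstacle will be ruling out intermediate subcurves $D^{\rm ram}\subsetneq D'\subsetneq D$ with $d_{D'}(\chi)=d_D(\chi)$ that could produce non-essentiality through $D'$ even while $d_{D^{\rm ram}}(\chi)<d_D(\chi)$; I expect to exclude this using the structure Theorem~\ref{thm-structure} and Arapura's Theorem~\ref{arapurath}, which force any such coincidence to come from an orbifold pencil already defined on $\CC^2\setminus D^{\rm ram}$, confirming that the minimal ramified subcurve $D^{\rm ram}$ controls essentiality.
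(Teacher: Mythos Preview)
Your approach coincides with the paper's: identify $W_1H^1(X_\chi)$ with the image of $H^1(\bar X_\chi)$ and then invoke Sakuma's Theorem~\ref{sakumastatement} to read off $\dim H^1(\bar X_\chi)_\chi=d_{D^{\rm ram}}(\chi)$, so that $\dim\Gr^W_2 H^1(X_\chi)_\chi=d_D(\chi)-d_{D^{\rm ram}}(\chi)$. The paper argues in exactly this way (using Proposition~\ref{weightonanycover} to pass freely between the cyclic cover $X_\chi$ and the larger abelian cover $X_n$), and handles the converse with the single sentence ``This is essential since otherwise Theorem~\ref{sakumastatement} would imply that it appears as the eigencharacter of a weight one subspace.''

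You are right to flag the intermediate-subcurve issue, and in fact the paper's own proof does not address it either: the quoted sentence tacitly assumes that non-essentiality of a coordinate $\chi$ forces $d_{D^{\rm ram}}(\chi)=d_D(\chi)$, whereas Definition~\ref{essentialdef} only supplies some $D'$ with $D^{\rm ram}\subseteq D'\subsetneq D$ and $d_{D'}(\chi)=d_D(\chi)$. So your proposal is more careful than the paper, not less. That said, your suggested fix via Theorems~\ref{arapurath} and~\ref{thm-structure} is only a sketch: those results attach an orbifold pencil to the component through $\chi$, but you still need to show that the relevant multiple fibres---and hence the depth---are already detected on $\CC^2\setminus D^{\rm ram}$, which is not automatic. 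If you pursue this, the point to pin down is why an orbifold pencil accounting for $d_{D'}(\chi)=d_D(\chi)$ restricts (after discarding the components of $D'\setminus D^{\rm ram}$, on which $\chi$ is unramified) to a pencil on $\CC^2\setminus D^{\rm ram}$ with the same target marking.
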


\begin{proof}
Let $\chi$ be an essential coordinate character and $n$ be its order. 
Denote by $X_n$ the covering space of the complement to the curve
corresponding to the surjection $\pi_1(X) \rightarrow H_1(X,\ZZ/n\ZZ)$. 
Let $\bar X_n$ be a smooth model of the compactification of $X_n$.
Then $W_1 H^1(X_n,\CC)=\im H^1(\bar X_n,\CC)$ (cf.~\cite{deligne}).
If $v \in W_1 H^1(X_n,\CC)$ is a $\chi$-eigenvector of $H_1(X,\ZZ/n)$ 
then $\chi$ is the eigencharacter of the action on the cohomology of the branched
cover. Since $\chi$ is a coordinate character, it follows from Sakuma's formula
(cf.~Theorem~\ref{sakumastatement}) that $\chi$ belongs to the characteristic
variety of the curve $D'$ with components corresponding to the trivial coordinate
of $\chi$. Using Proposition~\ref{weightonanycover}, we obtain that
essential coordinate characters have weight two.

Conversely, if a character has weight two, then it must be coordinate since by
Theorem~\ref{sakumastatement} non-coordinate characters belong to the image of
$H^1(\bar X_n,\CC) \rightarrow H^1(X_n,\CC)$ and hence have weight one. 
This is essential since otherwise Theorem~\ref{sakumastatement} would imply that it
appears as the eigencharacter of a weight one subspace. 
\end{proof}
 
\subsection{2-torsion characters and quasitoric relations} 
\label{sec-qt} 
\mbox{} 
 
As a consequence of Theorem~\ref{thm-main}.\ref{thm-main-part2} one has the following 
interpretation of depth for coordinate 2-torsion characters in terms of quasitoric 
relations of type $(2,2,0)$ (cf.~\cite{mordweil} for a detailed treatment of quasitoric 
relations of elliptic type). 
 
Let $D\subset \PP^2$ be a plane curve and let $\chi$ be a 2-torsion
character of $\pi_1(\PP^2-D)$ having two as the only weight
(i.e. as in Theorem~\ref{thm-hodge}). Let $S$ be the collection of the irreducible divisors
of $D$ and let $G(S)$ the subgroup of the group of
divisors of $\PP^2$ generated by~$S$.
 
Let us fix a generic line at infinity and
identify the coordinate ring of the affine plane with $\CC[x,y]$. 
An element in $\CC(x,y)$ is an $S$-unit (cf.\cite{rosen-s-units}) 
if it is the quotient 
of two polynomials such that the irreducible components of their zero locus 
belong to~$S$. This is a multiplicative group denoted by $E(S)$. 
Note that one has the identification $E(S)/\CC^*\cong G(S)$.
 
An $S$-unit is called \emph{primitive} if
%the multiplicity of each irreducible component of its divisor is 1 
it is a square-free polynomial. The set~$S$ splits into two subsets $S=S_0\cup S_1$ depending on whether or not
$\chi$ ramifies along each divisor, namely, $\chi(\gamma_0)=1$ (resp. $\chi(\gamma_1)=-1$)
for the boundary $\gamma_0$ (resp. $\gamma_1$) of a small disk transversal to any divisor in
$S_0$ (resp. $S_1$).
Let $D_1=\Sigma_{D_{1,i}\in S_1} D_{1,i}$ and $D_0=\Sigma_{D_{0,i}\in S_0} D_{0,i}$.
Note that $D_1$ has necessarily even degree. In other words, $D$ admits an equation $FH=0$,
where $(F)=D_1$ is a polynomial of even degree and $(H)=D_0$. 
 
% The choice of a generic line at infinity allows us to consider all these 
% polynomials in $\CC[x,y]$.
%Other rings, fields, groups, monoids and sets 
%will be considered: 
We shall use the following notations: 
\begin{itemize} 
\item $\CC[x,y]_{E(S)}$ is the localization of $\CC[x,y]$ at $E(S)$, the group of $S$-units.
Hence $E(S)$ is the group of units of $\CC[x,y]_{E(S)}$. 
\item $E(S)^+=E(S)\cap\CC[x,y]$ i.e.
$E(S)^+$ is the multiplicative monoid generated by $\CC^*$ and the polynomials in $E(S)$. 
\item $\KK_F:=\CC(x,y)[\sqrt{F}]$ is the quadratic extension of $\CC(x,y)$. 
\item $\KK_S:=\CC(x,y)[\sqrt{S}]$, the abelian extension of
$\CC(x,y)$ generated by those rational multivalued functions whose
square has an associated divisor which is a formal sum in $S$. 
\item $E(\sqrt{S})$ is the multiplicative group generated by $\CC^*$ and $\sqrt{S}$ and 
$E(\sqrt{S})^+$ is its associated monoid. 
\item $E_P(\sqrt{S})$ is the set of primitive elements of $E(\sqrt{S})$, that is, 
$E_P(\sqrt{S})^+:=\{\alpha\in E(\sqrt{S})\mid\alpha^2\in E(S)^+ \text{ is square-free}\}$. 
\end{itemize} 
We have the following inclusions: 
$$ 
\array{ccccl} 
\CC[x,y] &\subset & \CC[x,y]_{E(S)} & \subset \CC(x,y) \subset \KK_F \subset&\KK_S\\
\rotatebox{90}{$\subset$}& & \rotatebox{90}{$\subset$} &&\rotatebox{90}{$\subset$}\\ 
E(S)^+ &\subset&E(S) & &E(\sqrt{S})  \supset E(\sqrt{S})^+ \supset E_P(\sqrt{S})^+. 
\endarray 
$$ 
We consider the following set: 
$$ 
G_S:=\{(\bar u,\bar v)\in(\KK_S)^2\mid \bar u^2-\bar v^2=1\}, 
$$ 
with a group structure given by: 
\begin{equation} 
\label{eq-suma}
(\bar u_1,\bar v_1)\cdot (\bar u_2,\bar v_2):=(\bar u_1 \bar u_2+\bar v_1 \bar v_2,\bar u_1 \bar v_2+\bar v_1 \bar u_2). 
\end{equation} 
 
Note that $G_S$ is isomorphic to $\KK_S^*$ via the following map
\begin{equation} 
\label{eq-pi} 
\array{cccl} 
\pi : & \KK_S^* & \rightarrow & G_S \\ 
& t & \mapsto & \left(\frac{t+t^{-1}}{2},\frac{t-t^{-1}}{2}\right). 
\endarray 
\end{equation} 
Also, the map $\pi$ is equivariant with respect to the automorphisms 
 $t\mapsto t^{-1}$ of $\KK_S^*$
and $(u,v)\mapsto (u,-v)$ of $G_S$. 
 
\begin{dfn}\label{def-fpair} 
Let $(\bar u,\bar v)\in G_S$. We say that $(\bar u,\bar v)$ is an \emph{$F$-pair} if 
\begin{enumerate} 
\item\label{def-fpair1} $\bar u,\bar v\in E(\sqrt{S})\cdot \CC[x,y]_{E(S)}$ and 
\item\label{def-fpair2} there exists a decomposition $\bar u=\alpha\cdot u$, $\bar v=\beta\cdot v$,
for some $\alpha,\beta\in E(\sqrt{S})$ and $u,v\in\CC[x,y]_{E(S)}$, such that
$\alpha\cdot\beta\in\sqrt{F}\cdot E(S)$. 
\end{enumerate} 
A decomposition satisfying~\eqref{def-fpair2} is called an \emph{$F$-decomposition} of the $F$-pair. 
\end{dfn} 
 
\begin{remark} 
\label{rem-f-decomp} 
Note that any decomposition $\bar u=\alpha\cdot u$, $\bar v=\beta\cdot v$, for some $\alpha,\beta\in E(\sqrt{S})$
and $u,v\in\CC[x,y]_{E(S)}$ of an $F$-pair is an $F$-decomposition. 
%If~\eqref{def-fpair2} of Definition~\ref{def-fpair} holds for a particular $F$-decomposition, then 
%it holds for any $F$-decomposition. 
\end{remark} 
 
\begin{lemma} 
\label{lem-def-g} 
The set $G$ of $F$-pairs is a subgroup of $G_S$.
\end{lemma}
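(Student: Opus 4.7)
The plan is to verify the three subgroup axioms (identity, inverse, closure). The cleanest way to organize the computation is to work modulo $E(S)$: since every generator $\sqrt{f_i}$ of $E(\sqrt{S})$ over $\CC^*$ satisfies $(\sqrt{f_i})^2 = f_i \in E(S)$, the quotient $E(\sqrt{S})/E(S)$ is a $\ZZ/2$-vector space. The $F$-pair condition~\ref{def-fpair2}, in view of Remark~\ref{rem-f-decomp}, is intrinsic and reads $[\alpha]+[\beta]=[\sqrt{F}]$ in this quotient.

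The identity and inverse axioms are easy. The identity of $G_S$ is $(1,0)=\pi(1)$: take the decomposition $\alpha=1,\,u=1,\,\beta=\sqrt{F},\,v=0$, whose product $\alpha\beta=\sqrt{F}$ lies in $\sqrt{F}\cdot E(S)$. For the inverse, the equivariance of $\pi$ stated after~\eqref{eq-pi} shows $(\bar u,\bar v)^{-1}=(\bar u,-\bar v)$, so any $F$-decomposition of $(\bar u,\bar v)$ gives one of its inverse by negating $v$.

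The main point is closure. Given two $F$-pairs with $F$-decompositions $\bar u_i=\alpha_i u_i$, $\bar v_i=\beta_i v_i$, the product in $G_S$ is
\[
(\alpha_1\alpha_2\, u_1u_2+\beta_1\beta_2\, v_1v_2,\ \alpha_1\beta_2\, u_1v_2+\beta_1\alpha_2\, v_1u_2).
\]
Modulo $E(S)$ we have $[\alpha_1\alpha_2]+[\beta_1\beta_2]=([\alpha_1]+[\beta_1])+([\alpha_2]+[\beta_2])=2[\sqrt{F}]=0$, so $\beta_1\beta_2=e\,\alpha_1\alpha_2$ for some $e\in E(S)$; the first coordinate therefore equals $\alpha_1\alpha_2\cdot(u_1u_2+e v_1v_2)$, a product of an element of $E(\sqrt{S})$ by an element of $\CC[x,y]_{E(S)}$. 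The same $\ZZ/2$-cancellation $[\alpha_1\beta_2]+[\beta_1\alpha_2]=2([\sqrt{F}])=0$ handles the second coordinate, giving a factorization $\alpha_1\beta_2\cdot w$ with $w\in\CC[x,y]_{E(S)}$. Setting $\alpha:=\alpha_1\alpha_2$ and $\beta:=\alpha_1\beta_2$, we compute $\alpha\beta=\alpha_1^2(\alpha_2\beta_2)\in E(S)\cdot\sqrt{F}\cdot E(S)=\sqrt{F}\cdot E(S)$, so condition~\ref{def-fpair2} is preserved.

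The main obstacle — if one can call it that — is precisely the bookkeeping behind the two cancellations $[\alpha_1\alpha_2]=[\beta_1\beta_2]$ and $[\alpha_1\beta_2]=[\alpha_2\beta_1]$; they crucially use that $E(\sqrt{S})/E(S)$ is $2$-torsion, which is the algebraic reflection of working with $(2,2,0)$-quasitoric relations. Once these identifications are made, closure reduces to extracting a common $E(\sqrt{S})$-factor from each coordinate and verifying that the resulting pair lies in $\CC[x,y]_{E(S)}$, which is automatic since $E(S)\subset\CC[x,y]_{E(S)}$ is closed under the ring operations.
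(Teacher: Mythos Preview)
Your proof is correct and follows essentially the same route as the paper's: both reduce to factoring $\alpha_1\alpha_2$ (resp.\ $\alpha_1\beta_2$) out of the first (resp.\ second) coordinate of the product and checking that $(\alpha_1\alpha_2)(\alpha_1\beta_2)=\alpha_1^2(\alpha_2\beta_2)\in\sqrt{F}\cdot E(S)$. Your framing via the $\ZZ/2$-vector space $E(\sqrt{S})/E(S)$ is a clean conceptual repackaging of the same bookkeeping the paper does explicitly, and your separate verification of the identity element is a nice addition that the paper leaves implicit.
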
 
 
\begin{proof} 
Since $(\bar u,\bar v)^{-1}=(\bar u,-\bar v)$ it is enough to prove that the condition of $F$-pair is preserved by the product.
Consider $(\bar u_1,\bar v_1),(\bar u_2,\bar v_2)\in G$ and let $\bar u_i=\alpha_i\cdot u_i$, $\bar v_i=\beta_i\cdot v_i$, 
$i=1,2$, be 
$F$-decompositions of these pairs. Hence: 
$$ 
(\bar u_1,\bar v_1)\cdot(\bar u_2,\bar v_2)=(\alpha_1\alpha_2 u_1 u_2+\beta_1\beta_2 v_1 v_2, 
\alpha_1\beta_2 u_1 v_2+\beta_1\alpha_2 v_1 u_2). 
$$ 
For the first coordinate we have 
$$ 
\alpha_1\alpha_2 u_1 u_2+\beta_1\beta_2 v_1 v_2= 
\alpha_1\alpha_2\left( u_1 u_2+(\alpha_1\beta_1 )(\alpha_2\beta_2)\frac{v_1v_2}{\alpha_1^2\alpha_2^2}\right). 
$$ 
Note that $\alpha_i^2,\alpha_i\cdot\beta_i\in E(S)$, $i=1,2$, and hence we have a decomposition of 
this first coordinate. In a similar way we obtain a decomposition of the second coordinate where 
the first factor is $\alpha_1\beta_2$. 
Since 
$$ 
(\alpha_1\alpha_2)(\alpha_1\beta_2)=\alpha_1^2(\alpha_2\beta_2)\in\sqrt{F}\cdot E(S) 
$$ 
the result follows. 
\end{proof} 
 
\begin{dfn}\label{def-fnormal} 
Let $(\bar u,\bar v)\in G$ be an $F$-pair and let $\bar u=\alpha\cdot u$, $\bar v=\beta\cdot v$, be an $F$-decomposition
of $(\bar u,\bar v)$. This decomposition is said to be \emph{normal} if: 
\begin{enumerate} 
\item\label{def-fnormal1} $\alpha=\frac{\tilde \alpha}{\gamma}$, $\beta=\frac{\tilde \beta}{\gamma}$, where
$\tilde \alpha,\tilde \beta,\gamma\in E_P(\sqrt{S})^+$, $\gcd(\tilde \alpha,\tilde \beta,\gamma)=1$.
\item\label{def-fnormal2} $u=\frac{\tilde u}{w}$, $v=\frac{\tilde v}{w}$,
$\tilde u,\tilde v\in\CC[x,y]$, $w\in E(S)^+$, $\gcd(\tilde u,\tilde v,w)=1$. 
\item\label{def-fnormal3} $(\tilde \alpha \tilde \beta)^2=F$. 
\item\label{def-fnormal4} $\gamma^2$ is a divisor of $H$. 
\end{enumerate} 
\end{dfn} 
 
\begin{remark} 
The group $\CC^*$ acts on the set of $F$-normal decompositions via: 
$$\lambda \cdot (\alpha \cdot u,\beta \cdot v) 
\left( (\lambda \alpha) \cdot \left(\lambda^{-1} u\right),(\lambda^{-1} \beta) \cdot \left(\lambda v\right) \right)$$
This action of $\CC^*$ will be referred to as \emph{proportionality}. 
\end{remark} 
 
The following result shows that a normal $F$-decomposition of an $F$-pair 
is almost determined by the pair. 
 
\begin{lemma} 
Any $F$-pair admits a normal $F$-decomposition. Moreover, such $F$-decomposition is unique (up to proportionality). 
\end{lemma}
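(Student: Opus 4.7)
My plan is to reduce the problem to controlled bookkeeping in the Kummer extension $\KK_S/\CC(x,y)$, in which the quotient $\KK_S^*/\CC(x,y)^*$ is canonically a $\ZZ/2$-vector space with basis indexed by $S$. The condition $\bar u,\bar v\in E(\sqrt{S})\cdot \CC[x,y]_{E(S)}$ forces $\bar u$ and $\bar v$ to be pure monomials in the $\sqrt{F_i}$, so they admit (after fixing a branch of each $\sqrt{F_i}$) unique representations $\bar u=r_u\sqrt{F_{I_u}}$ and $\bar v=r_v\sqrt{F_{I_v}}$ with $r_u,r_v\in\CC(x,y)^*$ and $I_u,I_v\subset S$, where $F_I:=\prod_{i\in I}F_i$. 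Starting from any $F$-decomposition of $(\bar u,\bar v)$, the hypothesis $\alpha\beta\in\sqrt{F}\cdot E(S)$ translates, on passing to the Kummer quotient, into the equality $I_u\triangle I_v=S_1$, where we identify $S_1$ with the set of indices of the irreducible components comprising $F$. Writing $A:=I_u\cap S_1$ and $B:=I_u\cap S_0$ then yields $I_u=A\sqcup B$ and $I_v=(S_1\setminus A)\sqcup B$.

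For existence, I will set $\tilde\alpha:=\sqrt{F_A}$, $\tilde\beta:=\sqrt{F_{S_1\setminus A}}$, and $\gamma:=\sqrt{F_B}$, all of which lie in $E_P(\sqrt{S})^+$ by construction. Condition~\ref{def-fnormal3} then reduces to $(\tilde\alpha\tilde\beta)^2=F_{S_1}=F$; condition~\ref{def-fnormal4} holds since $B\subset S_0$ gives $\gamma^2=F_B\mid H$; and the triple coprimality in~\ref{def-fnormal1} is automatic because $\tilde\alpha,\tilde\beta,\gamma$ are square-roots of polynomials built from pairwise disjoint subsets of $S$. A direct computation in $\KK_S$ yields $\bar u\,\gamma/\tilde\alpha=r_u F_B$ and $\bar v\,\gamma/\tilde\beta=r_v F_B$, both elements of $\CC[x,y]_{E(S)}$. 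Taking $w$ to be a common denominator in $E(S)^+$ of these two rational functions, defining $\tilde u,\tilde v\in\CC[x,y]$ as the corresponding numerators, and then dividing the triple $(\tilde u,\tilde v,w)$ by its greatest common divisor secures condition~\ref{def-fnormal2}.

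For uniqueness, the images of $\bar u$ and $\bar v$ in the $\ZZ/2$-vector space $\KK_S^*/\CC(x,y)^*$ are canonical invariants of the $F$-pair that pin down the subsets $A$ and $B$. Hence $\tilde\alpha,\tilde\beta,\gamma$ are determined in $E_P(\sqrt{S})^+$ up to independent $\CC^*$-rescalings, and condition~\ref{def-fnormal3} links the rescalings of $\tilde\alpha$ and $\tilde\beta$ by forcing $\tilde\alpha\tilde\beta=\pm\sqrt{F}$. Once $\tilde\alpha,\tilde\beta,\gamma$ are chosen, the rational functions $\tilde u/w=\bar u\gamma/\tilde\alpha$ and $\tilde v/w=\bar v\gamma/\tilde\beta$ are fixed, and the conjunction of $w\in E(S)^+$ with $\gcd(\tilde u,\tilde v,w)=1$ determines $(\tilde u,\tilde v,w)$ up to a common $\CC^*$-scalar. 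Matching the remaining $\CC^*$-freedoms against the constraints between $\tilde\alpha$ and $\tilde\beta$, and against the invariance of the products $\gamma w$ and $\tilde\alpha\tilde u$ under joint rescalings, collapses the ambiguity to the single one-parameter proportionality action.

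The step I expect to be the main obstacle is isolating precisely the group of rescalings compatible with all four normality conditions and verifying that it coincides with proportionality; in particular one has to keep track of the fact that $\gamma$ and $w$ enter the formulas only through the product $\gamma w$, and that any rescaling of this product must be compensated by the $\CC^*$-freedoms in $\tilde\alpha$ and the numerators. Everything else reduces to routine bookkeeping in the unique factorization domain $\CC[x,y]$ and in the free abelian group $E(\sqrt{S})/\CC^*$.
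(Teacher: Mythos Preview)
Your argument is correct and takes a genuinely different route from the paper. The paper proceeds algorithmically: it starts from an arbitrary $F$-decomposition, reduces numerators and denominators of $\alpha,\beta$ to primitive elements, takes common denominators, and then performs a case analysis on the multiplicities $m(\sigma,\tilde\alpha)$, $m(\sigma,\tilde\beta)$, $m(\sigma,\gamma)$ for each irreducible $\sigma$, rewriting the decomposition in finitely many steps to force condition~(3). Your approach is more structural: by passing to the Kummer quotient $\KK_S^*/\CC(x,y)^*\cong(\ZZ/2)^S$ you extract the canonical invariants $I_u,I_v\subset S$ directly from $(\bar u,\bar v)$, observe that the $F$-pair condition is exactly $I_u\triangle I_v=S_1$, and then write down the normal decomposition in one shot from the partition $I_u=A\sqcup B$. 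This avoids the iterative rewriting entirely and makes the role of the constraint $\alpha\beta\in\sqrt{F}\cdot E(S)$ transparent. For uniqueness, the paper simply declares it ``straightforward'', while you give an actual outline via the canonicity of $A,B$; your caution about collapsing the residual $\CC^*$-freedoms to the single proportionality action is well placed---the paper does not spell this out either, and the bookkeeping of the separate scalars on $\gamma$ and $w$ is indeed the only point requiring care.
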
 
 
\begin{proof} 
The uniqueness part is straightforward. Let us start with an arbitrary $F$-decomposition
$\bar u=\alpha \cdot u$, $\bar v=\beta \cdot v$.
We can express for instance $\alpha=\frac{\tilde{\alpha}}{\gamma}$, where
$\tilde{\alpha},\gamma\in E(\sqrt{S})^+$ have no common factors. Note that an element in $E(\sqrt{S})^+$
is not primitive if an only if it contains a factor in $E(S)^+$.
If $\tilde{\alpha}$ is not primitive, 
then $\tilde{\alpha}=\tilde{\alpha}_1 u_1$, where $u_1\in E(S)^+$. In that case, one can rewrite
$\bar u=(\frac{\tilde{\alpha}_1}{\tilde{\gamma}}) (u_1 u)$.
One can assume that both decompositions 
$\bar u=\alpha \cdot u$ and $\bar v=\beta \cdot v$ are such that $\alpha$ and $\beta$ have primitive 
numerators and denominators. Then after taking a common denominator for $\alpha$ and $\beta$ 
(resp. for $u$ and $v$), we can assume that the decomposition
\begin{equation} 
\label{eq-f-decomposition} 
\array{l} 
\bar u=\alpha \cdot u=\left(\frac{\tilde{\alpha}}{\gamma}\right) \cdot \left( \frac{\tilde u}{w}\right),\\ 
\bar v=\beta \cdot v=\left(\frac{\tilde{\beta}}{\gamma}\right) \cdot \left( \frac{\tilde u}{w}\right) 
\endarray 
\end{equation} 
satisfies~\eqref{def-fnormal1} and~\eqref{def-fnormal2}. 
 
By Remark~\ref{rem-f-decomp}, the decomposition $(\alpha \cdot u,\beta \cdot v)$ is an $F$-decomposition and 
hence $\alpha\beta=\frac{\tilde{\alpha}\tilde{\beta}}{\gamma^2}\in\sqrt{F}\cdot E(S)$. 
Let $\sigma\in E_P(\sqrt{S})$ denote an irreducible element. Let us denote by $m(\sigma,\alpha)$ the multiplicity of
$\sigma$ in the decomposition of $\alpha$ into irreducible factors.
Since $\tilde{\alpha},\tilde{\beta},\gamma\in E_P(\sqrt{S})^+$ one concludes 
 that $m(\sigma,\bullet)\in \{0,1\}$ 
for $\bullet=\tilde \alpha,\tilde \beta,\gamma$. 
Moreover, by condition~\eqref{def-fpair2} in Definition~\ref{def-fpair}, if $\sigma^2|F$, then
$m(\sigma,\tilde \alpha)+m(\sigma,\tilde \beta)-2m(\sigma,\gamma)$ is odd and hence so is 
$m(\sigma,\tilde \alpha)+m(\sigma,\tilde \beta)$. The previous two conditions imply that: 
\begin{itemize} 
 \item $m(\sigma,\tilde \alpha)+m(\sigma,\tilde \beta)-2m(\sigma,\gamma)=1$,
 \item $m(\sigma,\gamma)=0$, and thus 
 \item $m(\sigma,\tilde \alpha)+m(\sigma,\tilde \beta)=1.$ 
\end{itemize} 
Hence the second property implies condition~\eqref{def-fnormal4}. 
 
Similarly, if $\sigma^2|H$, then condition~\eqref{def-fpair2} in Definition~\ref{def-fpair}, implies that 
$m(\sigma,\tilde \alpha)+m(\sigma,\tilde \beta)-2m(\sigma,\gamma)$ is even and hence so is 
$m(\sigma,\tilde \alpha)+m(\sigma,\tilde \beta)$. As above, this and the fact that
$m(\sigma,\bullet)\in \{0,1\}$ for $\bullet=\tilde \alpha,\tilde \beta,\gamma$ imply that: 
\begin{itemize} 
 \item $m(\sigma,\tilde \alpha)=m(\sigma,\tilde \beta)=0$ and $m(\sigma,\gamma)\in \{0,1\}$, or 
 \item $m(\sigma,\tilde \alpha)=m(\sigma,\tilde \beta)=1$ and $m(\sigma,\gamma)=0$. 
\end{itemize} 
In order to show condition~\eqref{def-fnormal3} it is enough to prove that the last case can be avoided. 
In order to do so we rewrite $\bar u$ and $\bar v$ as 
\begin{equation} 
\label{eq-f-decomposition2} 
\array{l} 
\bar u=\left(\frac{\frac{\tilde \alpha}{\sigma}}{\sigma \gamma}\right) \cdot
\left(\frac{\sigma^2 \tilde u}{w}\right)=\alpha_1\cdot u_1,\\ 
\bar v=\left(\frac{\frac{\tilde \beta}{\sigma}}{\sigma \gamma}\right) \cdot
\left(\frac{\sigma^2 \tilde v}{w}\right)=\beta_1\cdot v_1, 
\endarray 
\end{equation} 
where $\alpha_1:=\frac{\tilde \alpha_1}{\gamma_1}$, $\beta_1:=\frac{\tilde \beta_1}{\gamma_1}$, 
$\tilde \alpha_1:=\frac{\tilde \alpha}{\sigma}$, $\tilde \beta_1:=\frac{\tilde \beta}{\sigma}$, 
$\gamma_1:=(\sigma\gamma)$, $u_1:=\frac{\tilde u_1}{w}$, $v_1:=\frac{\tilde v_1}{w}$,
$\tilde u_1:=\sigma^2 \tilde u$, and $\tilde v_1:=\sigma^2 \tilde v$. 
Therefore, the second case can be avoided. After a finite number of steps one can assume 
that $\bar u=\alpha_1\cdot u_1$, $\bar v=\beta_1\cdot v_1$ satisfies: 
\begin{enumerate} 
 \item $m(\sigma,\tilde \alpha_1)+m(\sigma,\tilde \beta_1)=1$ for all $\sigma$ such that $\sigma^2|F$, 
 \item $m(\sigma,\tilde \alpha_1)=m(\sigma,\tilde \beta_1)=0$ and $m(\sigma,\gamma_1)\in \{0,1\}$, 
 for all $\sigma$ such that $\sigma^2|H$. 
\end{enumerate} 
Hence it satisfies~\eqref{def-fnormal3}. 
Finally, based on the construction, it is easy to check that this new decomposition also 
satisfies~\eqref{def-fnormal1},\eqref{def-fnormal2}, and~\eqref{def-fnormal4}, that is: 
\begin{enumerate} 
\item $\tilde \alpha_1,\tilde \beta_1\in E_P(\sqrt{S})^+$, $\gcd(\tilde \alpha_1,\tilde \beta_1,\gamma_1)=1$, 
\item $\tilde u_1,\tilde v_1\in\CC[x,y]$, $w\in E(S)^+$, $\gcd(\tilde u_1,\tilde v_1,w)=1$, and 
\item[(4)] $\gamma_1^2$ is a divisor of $H$. 
\hfill $\qedhere$ 
\end{enumerate} 
\end{proof} 
 
Consider a quintuple $(f,g,h,U,V)$ of polynomials in $\CC[x,y]$ satisfying the functional equation
\begin{equation} 
\label{eq-qt-200} 
f U^2-g V^2=h,
\end{equation} 
where
$f\cdot g=F$, $h_{\red}$ (a generator of the radical of $(h)$) divides $H$ and $F,H$ are defined as at the
beginning of this section. Note that $(\CC^*)^2$ acts on the set of such quintuples as follows: given
$\lambda,\mu \in \CC^*$, then
$$(\tilde f,\tilde g,\tilde h,\tilde U,\tilde V)= 
\left(\left(\lambda^2 f\right),\left(\lambda^{-2} g\right), 
\left(\mu^2 h\right),\left(\mu \lambda^{-1} U\right),\left(\mu \lambda V\right)\right).$$
As in the case of $F$-pairs we will denote such action as \emph{proportionality}. 
 
\begin{dfn} 
\label{dfn-qt220} 
A \emph{$(2,2,0)$-quasitoric relation} associated to the character~$\chi$ is
a proportionality class (in the above sense) of quintuple 
$(f,g,h,U,V)$ of polynomials in $\CC[x,y]$
satisfying the functional equation $f U^2-g V^2=h$, where
$f\cdot g=F$, $h_{\text{red}}$ divides $H$ and $F,H$ are defined as at the
beginning of this section. 
\end{dfn} 
 
\begin{remark} 
\label{rem-qt-action} 
As mentioned before Definition~\ref{def-fpair}, the set of $(2,2,0)$-quasitoric relations has a natural order 
two action defined as $(f,g,h,U,V)\mapsto (f,g,h,U,-V)$. Note that $(f,g,h,-U,V)$ is proportional to $(f,g,h,U,-V)$ 
and $(f,g,h,-U,-V)$ is proportional to $(f,g,h,U,V)$. 
\end{remark} 
 
Our purpose now is to establish an isomorphism between $(2,2,0)$-quasitoric relations and 
normal $F$-decompositions.
 
Denote by $Q_{(D,\chi)}$ the set of $(2,2,0)$-quasitoric relations of $D$ associated to the character~$\chi$, that is, 
\begin{equation}
\label{eq-qt-rel}
Q_{(D,\chi)}:=\{(f,g,h,U,V)\in \CC[x,y]^5 \mid f U^2-g V^2=h, f\cdot g=F, \text{\ and\ } h_{\text{red}}|H\}/\sim .
\end{equation}
 
\begin{prop} 
\label{prop-qt-f} 
The set of $(2,2,0)$-quasitoric relations $Q_{(D,\chi)}$ has a group structure 
isomorphic to the group $G$ described above (Lemma{\rm~\ref{lem-def-g}}), where the isomorphism is equivariant with 
respect to the order two actions on both groups. 
\end{prop}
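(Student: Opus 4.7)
The plan is to construct a pair of mutually inverse maps $\Phi\colon Q_{(D,\chi)}\to G$ and $\Psi\colon G\to Q_{(D,\chi)}$, transport the group law of Lemma~\ref{lem-def-g} along these bijections, and verify equivariance of the two order-two involutions.

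First I would define $\Phi$ on a representative $(f,g,h,U,V)$ of a quasitoric relation by
\[
\Phi(f,g,h,U,V) := \bigl(\sqrt{f}\,U/\sqrt{h},\ \sqrt{g}\,V/\sqrt{h}\bigr).
\]
The hypotheses $fg=F$ and $h_{\text{red}}\mid H$ place $f$, $g$, $h$ in $E(S)^+$, so both entries lie in $\KK_S$, and the quasitoric relation $fU^2-gV^2=h$ rewrites as $\bar u^2-\bar v^2=1$, placing the image in $G_S$. The factorizations $\bar u=(\sqrt{f}/\sqrt{h})\cdot U$ and $\bar v=(\sqrt{g}/\sqrt{h})\cdot V$ are $F$-decompositions since $\alpha\beta=\sqrt{F}/h\in\sqrt{F}\cdot E(S)$. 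That $\Phi$ descends to proportionality classes is immediate after fixing compatible choices of square roots.

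Next, starting from the essentially unique normal $F$-decomposition $\bar u=(\tilde\alpha/\gamma)(\tilde u/w)$, $\bar v=(\tilde\beta/\gamma)(\tilde v/w)$ provided by the preceding lemma, I would set
\[
\Psi(\bar u,\bar v) := \bigl(\tilde\alpha^2,\,\tilde\beta^2,\,(\gamma w)^2,\,\tilde u,\,\tilde v\bigr).
\]
Condition~\eqref{def-fnormal3} gives $\tilde\alpha^2\tilde\beta^2=F$, and~\eqref{def-fnormal4} together with $w\in E(S)^+$ gives $((\gamma w)^2)_{\text{red}}\mid H$; multiplying $\bar u^2-\bar v^2=1$ through by $(\gamma w)^2$ yields the quasitoric equation. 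A direct calculation then shows $\Phi\circ\Psi=\mathrm{id}$ and $\Psi\circ\Phi=\mathrm{id}$.

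To transport the group law of Lemma~\ref{lem-def-g} to $Q_{(D,\chi)}$, I would use the Lagrange-type identity
\[
(A^2-B^2)(C^2-D^2) = (AC+BD)^2 - (AD+BC)^2
\]
with $A=\sqrt{f_1}\,U_1$, $B=\sqrt{g_1}\,V_1$, $C=\sqrt{f_2}\,U_2$, $D=\sqrt{g_2}\,V_2$: the right-hand side realizes $(\bar u_1\bar u_2+\bar v_1\bar v_2)^2-(\bar u_1\bar v_2+\bar v_1\bar u_2)^2=1$ as $f_3 U_3^2-g_3 V_3^2=h_3$, with $h_3=h_1 h_2$ and $(f_3,g_3,U_3,V_3)$ read off by extracting common square factors from $\sqrt{f_1 f_2}$ and $\sqrt{g_1 g_2}$. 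Explicitly, setting $F_1=\gcd(f_1,f_2)$, $F_2=\gcd(f_1,g_2)$, $F_3=\gcd(g_1,f_2)$, $F_4=\gcd(g_1,g_2)$, so that $F=F_1 F_2 F_3 F_4$, one obtains $f_3=F_2 F_3$, $g_3=F_1 F_4$, $U_3=F_1 U_1 U_2+F_4 V_1 V_2$, $V_3=F_2 U_1 V_2+F_3 V_1 U_2$. Equivariance is then built in: the involution $(\bar u,\bar v)\mapsto(\bar u,-\bar v)$ corresponds under $\Phi$ exactly to $V\mapsto -V$ from Remark~\ref{rem-qt-action}. The main obstacle is the well-definedness of $\Psi$ on proportionality classes—that two normal $F$-decompositions of the same pair yield quintuples differing only by the $(\CC^*)^2$-action—which I expect to handle by tracking how a $\CC^*$-rescaling of a normal decomposition propagates through the squares $(\tilde\alpha^2,\tilde\beta^2,(\gamma w)^2)$, using the same valuation bookkeeping $m(\sigma,\cdot)$ deployed in the uniqueness proof for normal $F$-decompositions.
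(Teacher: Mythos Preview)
Your approach is essentially the paper's: construct mutually inverse maps between $Q_{(D,\chi)}$ and $G$ via normal $F$-decompositions and the obvious square-root assignment, then check equivariance. The maps $\Phi$ and $\Psi$ you write down are exactly the ones in the paper (in the opposite order).

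There is, however, one genuine gap. You assert that ``$w\in E(S)^+$ gives $((\gamma w)^2)_{\text{red}}\mid H$''. This inference is false: the set $S$ consists of \emph{all} irreducible components of $D$, so $E(S)^+$ is generated by factors of $F$ as well as factors of $H$. Condition~\eqref{def-fnormal2} in the definition of a normal $F$-decomposition says only $w\in E(S)^+$, so a priori an irreducible factor of $F$ could divide $w$, and then $h=(\gamma w)^2$ would violate $h_{\text{red}}\mid H$; your map $\Psi$ would not land in $Q_{(D,\chi)}$. The paper supplies precisely this missing step with a parity argument: if an irreducible $p\mid F$ also divided $w$, then (since $fg=F$ is square-free) $p$ divides exactly one of $f,g$---say $f$---and the relation $f\tilde u^2-g\tilde v^2=\gamma^2 w^2$ together with $\gamma^2\mid H$ forces $p\mid\tilde v$; comparing $p$-multiplicities (odd in $f\tilde u^2$, even in $g\tilde v^2$ and in $\gamma^2 w^2$) yields a contradiction. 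This is the actual crux of showing $\Psi$ is well defined, not the proportionality issue you single out as ``the main obstacle'', which is essentially immediate from the uniqueness of the normal $F$-decomposition up to~$\CC^*$.

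A smaller point: the explicit product formula you derive (with $h_3=h_1h_2$ and the $F_i$'s) is more than the proposition requires---the group law on $Q_{(D,\chi)}$ is simply transported along the bijection---and the product of two normal $F$-decompositions is generally not normal, so after renormalization the third entry need not equal $h_1h_2$.
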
 
 
\begin{proof} 
Let $(\bar u,\bar v)$ be an $F$-pair. We are going to associate to $(\bar u,\bar v)$ a $(2,2,0)$-quasitoric relation 
$QT(\bar u,\bar v)$. To this end,
consider a normal $F$-decomposition,
$u=\alpha\frac{U}{W}$, $v=\beta\frac{V}{W}$,
where $\alpha:=\frac{\tilde \alpha}{\gamma}$ and $\beta:=\frac{\tilde \beta}{\gamma}$. Let
$f:=\tilde \alpha^2$, $g:=\tilde \beta^2$, $h_0:=\gamma^2$. From $\bar u^2-\bar v^2=1$, we deduce: 
$$ 
f U^2-g V^2=h_0 W^2. 
$$ 
It is enough to show that $W_{\text{red}}$ divides $H$. We already know that it divides $F H$, hence 
it is enough to show that no irreducible component of $F$ divides $W$. Otherwise it should divide either $f$ or $g$, 
say $f$ for simplicity, then it divides $V$ (and not $g$). Therefore its multiplicity is odd in $f U^2$
and even in $g V^2$ as well as in $h_0 W^2$, which is a contradiction. If $h:=h_0 W^2$, then 
$QT(\bar u,\bar v):=(f,g,h,U,V)$ is a $(2,2,0)$-quasitoric relation of~$\chi$. 
 
Notice that up to proportionality, one has $u=(\lambda \alpha)\frac{\lambda^{-1}\mu U}{\mu W}$,
$v=(\lambda^{-1} \beta)\frac{\lambda \mu V}{\mu W}$, where
$\lambda \alpha:=\frac{\lambda \tilde \alpha}{\gamma}$ and
$\lambda^{-1} \beta:=\frac{\lambda^{-1} \tilde \beta}{\gamma}$. Define 
$\tilde f:=\left(\lambda^2 f\right)$, 
$\tilde g:=\left(\lambda^{-2} g\right)$, 
$\tilde W:=\left(\mu^2 W^2\right)$, 
$\tilde U:=\left(\mu \lambda^{-1} U\right)$, and
$\tilde V:=\left(\mu \lambda V\right)$. From $\bar u^2-\bar v^2=1$, we deduce: 
$$ 
\tilde f \tilde U^2-\tilde g \tilde V^2=h_0 \tilde W^2, 
$$ 
and hence $(\tilde f,\tilde g,h_0 \tilde W^2,\tilde U,\tilde V)\sim (f,g,h,U,V)$. 
 
Conversely, let us fix a $(2,2,0)$-quasitoric relation $QT=(f,g,h,U,V)$.
Let $\tilde \alpha:=\sqrt{f}$ and $\tilde \beta:=\sqrt{g}$ 
and write down $h:=h_0 W^2$ where $h_0$ is square-free. Denote $\gamma:=\sqrt{h_0}$. Then 
$$ 
\left(\frac{\tilde \alpha}{\gamma}\frac{U}{W},\frac{\tilde \beta}{\gamma}\frac{V}{W}\right) 
$$ 
is the normal $F$-decomposition of the $F$-pair $(u,v)$ such that $QT(u,v)=QT$.
 
Finally, note that if $(\bar u,\bar v)$ is the normal $F$-decomposition associated with the
$(2,2,0)$-quasitoric relation $(f,g,h,U,V)$ as at the beginning of this proof, then
$(\bar u,-\bar v)$ will be associated with $(f,g,h,U,-V)$, and hence the result follows. 
\end{proof} 
 
Consider $X_2$ the double cover of $X=\CC^2\setminus D$ associated to the order two character~$\chi$. 
 
\begin{prop} 
\label{prop-qt-iso} 
The group $Q_{(D,\chi)}$ is isomorphic to $\Mor_{\ZZ_2}(X_2,\CC^*)$. 
\end{prop}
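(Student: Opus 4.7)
The plan is to exhibit an explicit group isomorphism $\Psi\colon Q_{(D,\chi)} \to \Mor_{\ZZ_2}(X_2,\CC^*)$ via Proposition~\ref{prop-qt-f}. First, I describe the target concretely: since $X_2$ is the affine double cover $\Spec\bigl(\cO(X)[w]/(w^2-F)\bigr)$ with $\cO(X)=\CC[x,y]_{E(S)}$ and covering involution $\sigma\colon w\mapsto -w$, a $\ZZ_2$-equivariant morphism $X_2\to\CC^*$ is exactly a unit $\phi = A + Bw$ with $A,B\in\cO(X)$ satisfying the norm condition $\sigma^*\phi\cdot\phi = A^2 - B^2F = 1$.

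I then define $\Psi$ on a representative quintuple $(f,g,h,U,V)$ by
\[
\Psi(f,g,h,U,V) \;:=\; \frac{fU^2+gV^2}{h} + \frac{2UV}{h}\,w \;=\; \frac{(\sqrt f\,U + \sqrt g\,V)^2}{h},
\]
where the latter expression makes sense in $\KK_F$ upon expansion using $fg = F$. The identity $(fU^2+gV^2)^2 - 4fg\,U^2V^2 = (fU^2-gV^2)^2$, together with $fg=F$ and $fU^2-gV^2=h$, yields $A^2-B^2F=1$; since $h\in E(S)^+$ we also have $A,B\in\cO(X)$, placing $\Psi(f,g,h,U,V)$ in $\Mor_{\ZZ_2}(X_2,\CC^*)$. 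Invariance under the $(\CC^*)^2$-action of proportionality is a direct substitution, so $\Psi$ descends to a well-defined map on $Q_{(D,\chi)}$.

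That $\Psi$ is a group homomorphism becomes transparent through the isomorphism $Q_{(D,\chi)}\cong G$ of Proposition~\ref{prop-qt-f} and the parametrization $\pi\colon\KK_S^*\to G_S$. In the normal $F$-decomposition $\bar u = \sqrt f\,U/\sqrt h$, $\bar v = \sqrt g\,V/\sqrt h$ one has $A = \bar u^2+\bar v^2$ and $Bw = 2\bar u\bar v$ on $X_2$, so $\Psi = (\bar u+\bar v)^2$. Writing $t := \bar u+\bar v\in\KK_S^*$ so that $\pi(t)=(\bar u,\bar v)$, the map $\Psi$ becomes the restriction to $G$ of the squaring homomorphism $\KK_S^*\to\KK_F^*$, $t\mapsto t^2$, which is multiplicative.

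For bijectivity I construct the inverse. Given $\phi\in\Mor_{\ZZ_2}(X_2,\CC^*)$, a square root $t=\sqrt\phi$ lies in $\KK_S^*$: indeed, since $\phi$ is a regular unit on $X_2$, its divisor on a smooth compactification is supported on preimages of components of $D$, so the quadratic extension $\CC(X_2)[\sqrt\phi]/\CC(x,y)$ is $2$-elementary abelian and ramifies only along curves in $S$, hence embeds into the maximal such extension $\KK_S$. Setting $(\bar u,\bar v):=\pi(t)$ one verifies the $F$-pair conditions---the decisive one, $\alpha\beta\in\sqrt F\cdot E(S)$, follows from $2\bar u\bar v = (t^2-t^{-2})/2 = (\phi-\phi^{-1})/2 = Bw \in \sqrt F\cdot\cO(X)$---after which Proposition~\ref{prop-qt-f} supplies a quasitoric relation whose $\Psi$-image is $\phi$. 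The hard part will be this last step: verifying both that $\sqrt\phi$ genuinely lies in $\KK_S$ (a ramification analysis on the compactified double cover) and that the resulting pair $(\bar u,\bar v)$ meets the polynomial-integrality requirements of Definition~\ref{def-fpair}, while keeping track of sign choices for the square roots (which identify $(\bar u,\bar v)$ with $(-\bar u,-\bar v)$, an identification already implicit in the normal $F$-decomposition convention).
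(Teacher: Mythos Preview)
Your approach is correct and lands on the same explicit isomorphism as the paper: both identify $\Psi_p$ with $(\bar u+\bar v)^2$, where $(\bar u,\bar v)$ is the $F$-pair attached to the quasitoric relation. The packaging differs. The paper works geometrically, first sending $(f,g,h,U,V)$ to the orbifold pencil $[fU^2:gV^2]\colon X\to\CC_{2,2}$ and lifting it to an equivariant map $X_2\to\CC^*$; the formula $(\alpha u+\beta v)^2$ only emerges after a M\"obius change of target coordinates on $\PP^1$. You instead write the unit $A+Bw$ directly and recognize it as $t^2$ via the parametrization $\pi$, which makes the homomorphism property transparent from the outset.

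The substantive divergence is in surjectivity. The paper invokes the orbifold machinery already developed: an equivariant map $X_2\to\CC^*$ descends to a marked pencil $(X,\chi)\to(\CC_{2,2},\rho)$, hence to a rational map $\PP^2\dashrightarrow\PP^1$ which, by the orbifold structure of the target, must have the form $[fU^2:gV^2]$ with $fg=F$ and fiber over $[1:1]$ supported on~$H$. No field theory is needed. Your route through $\sqrt\phi\in\KK_S$ also works, but the step you flag as hard really has two parts. First, that $\KK_F[\sqrt\phi]/\CC(x,y)$ is $(\ZZ/2)^2$-Galois uses $\sigma^*\phi=\phi^{-1}$, so the four conjugates $\pm\sqrt\phi,\pm\sqrt\phi^{-1}$ all lie in the extension. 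Second---and this is what your sketch elides---condition~(1) of Definition~\ref{def-fpair} requires $\bar u,\bar v$ to be \emph{monomials} in $E(\sqrt S)\cdot\cO(X)$, not merely elements of $\KK_S$; this follows from $\bar u^2=(A+1)/2$, $\bar v^2=(A-1)/2$ together with the observation that $(A+1)(A-1)=B^2F$ forces each factor to have even vanishing order along every irreducible curve outside~$S$ (since $A+1$ and $A-1$ cannot vanish simultaneously). Only then does condition~(2) follow from $2\bar u\bar v=Bw$. Your argument is more self-contained; the paper's is shorter because it cashes in theory built earlier.
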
 
 
\begin{proof} 
We will give a constructive proof of this result. Suppose one has a quasitoric relation
$(f,g,h,U,V)\in Q_{(D,\chi)}$. There is a rational map $\CC^2\rightarrow \PP^1$ defined by
$(x,y)\mapsto [fU^2:gV^2]$ such that $f U^2 - g V^2 = h$. 
This map, restricted to $\tilde X=\{fgh=0\}$ defines an orbifold morphism
$\tilde X\to \CC_{2,2}=\PP^1_{(2,[0:1]),(2,[1:0])}\setminus \{[1:1]\}$. Finally, since $\{fg=0\}=\{F=0\}$ and
$\{h=0\}\subset \{H=0\}$ by definition, one can restrict this to a well-defined orbifold morphism
$(X,\chi)\to (\CC_{2,2},\rho)$, which induces an equivariant morphism in~$\Mor_{\ZZ_2}(X_2,\CC^*)$. 
 
Conversely, any equivariant morphism in $\Mor_{\ZZ_2}(X_2,\CC^*)$ induces a morphism of marked orbifolds
on the quotient $(X,\chi)\to (\CC_{2,2},\rho)$. Extending this to $\PP^2$ one obtains a rational morphism
$\PP^2 \dashrightarrow \CC_{2,2}=\PP^1_{(2,[0:1]),(2,[1:0])}\setminus \{[1:1]\}$, which on a generic affine
chart can be defined by $(x,y)\mapsto [fU^2:gV^2]$, where $\{fg=0\}$ corresponds to the ramified part of~$D$ 
and $fU^2-gV^2=h$ with $\{\tilde h=0\}\subset \{H=0\}$ the unramified part of~$D$. This quasitoric relation
defines an element of~$Q_{(D,\chi)}$ and the order-two action corresponds with the covering transformations. 
 
Finally, let us check that this bijection is in fact a homomorphism. Note that an element
$p:=(f,g,h,U,V)\in Q_{(D,\chi)}$ produces the following commutative diagram: 
 
\begin{equation}\label{eq-comm-diag-x21} 
%\begin{center} 
\begin{tikzpicture}[description/.style={fill=white,inner sep=2pt},baseline=(current bounding box.center)] 
\matrix (m) [matrix of math nodes,
column sep=2.5em, text height=1.5ex, text depth=0.25ex] 
{{[x:y:z:w]} & {[\alpha u:\beta v]} \\[1em] 
X_2 & \PP^1\setminus \{[1:1],[1:-1]\}\\[3em] 
X &\PP^1_{(2,[1:0]),(2,[0:1])}\setminus \{[1:1]\}\\[1em] 
{[x:y:z]} &{[f U^2: g V^2]},\\ }; 
%\path[->,font=\normalsize] 
\path[|->,shorten >=7.5mm,shorten <=2.5mm,>=angle 90](m-1-1) edge node[auto] {} (m-1-2); 
\path[->,>=angle 90](m-2-1) edge node[auto] {$\Psi_p$} (m-2-2); 
\path[->,>=angle 90](m-3-1) edge node[auto] {$\psi_p$} (m-3-2); 
\path[->,>=angle 90](m-4-1) edge node[auto] {} (m-4-2); 
\path[->,>=angle 90](m-2-1)edge node[auto,swap] {}(m-3-1); 
\path[->,>=angle 90](m-2-2)edge node[auto,swap] {}(m-3-2); 
\end{tikzpicture} 
%\end{center} 
\end{equation} 
where $X_2$ is contained in $\{[x:y:z:w]\in \PP^3 \mid w^2=fg=F\}$ and $(\alpha u,\beta v)$ is the 
normal $F$-decomposition associated with $p$ according to Proposition~\ref{prop-qt-f}. 
For convenience, let us change the coordinates of $\PP^1$ so that $[1:1]\mapsto [1:0]$ and $[1:-1]\mapsto [0:1]$. 
In that case, $\PP^1\setminus \{[1:1],[1:-1]\}$ becomes $\CC^*=\PP^1\setminus \{[1:0],[0:1]\}$ and the new 
equation of $\Psi_p:X_2\to \CC^*$ becomes $\Psi_p(x,y,z,w)=(\alpha u+\beta v)^2$. Moreover,
diagram~\eqref{eq-comm-diag-x21} becomes 
 
\begin{equation}\label{eq-comm-diag-x22} 
%\begin{center} 
\begin{tikzpicture}[description/.style={fill=white,inner sep=2pt},baseline=(current bounding box.center)] 
\matrix (m) [matrix of math nodes,
column sep=2.5em, text height=1.5ex, text depth=0.25ex] 
{{[x:y:z:w]} & (\alpha u+\beta v)^2 \\[1em] 
X_2 & \CC^*\\[3em] 
X &\CC_{(2,\sqrt{-1}),(2,-\sqrt{-1})}\\[1em] 
{[x:y:z]} &2\alpha\beta uv,\\ }; 
%\path[->,font=\normalsize] 
\path[|->,shorten >=7.5mm,shorten <=2.5mm,>=angle 90](m-1-1) edge node[auto] {} (m-1-2); 
\path[->,>=angle 90](m-2-1) edge node[auto] {$\Psi_p$} (m-2-2); 
\path[->,>=angle 90](m-3-1) edge node[auto] {$\psi_p$} (m-3-2); 
\path[->,>=angle 90](m-4-1) edge node[auto] {} (m-4-2); 
\path[->,>=angle 90](m-2-1)edge node[auto,swap] {}(m-3-1); 
\path[->,>=angle 90](m-2-2)edge node[auto,swap] {}(m-3-2); 
\end{tikzpicture} 
%\end{center} 
\end{equation} 
where the vertical right map is the double cover $t\mapsto -\frac{t-t^{-1}}{2}$ ramified at $t=\pm \sqrt{-1}$ 
with values $\mp \sqrt{-1}$ and where $(\alpha u+\beta v)^{-1}=(\alpha u-\beta v)$. 
 
Given two quasitoric relations $p_i:=(f_i,g_i,h_i,U_i,V_i)\in Q_{(D,\chi)}$, $i=1,2$ where $f_1g_1=f_2g_2=F$ 
and $h_i|H$. Consider $(\bar u_i,\bar v_i)=(\alpha_i u_i,\beta_i v_i)$, $i=1,2$ the normal $F$-decompositions
associated with $p_i$ according to Proposition~\ref{prop-qt-f}. From~\eqref{eq-suma} one can see that 
\begin{equation*} 
\Psi_{p_1p_2}=(\bar u_1\bar u_2+\bar v_1\bar v_2+\bar u_1\bar v_2+\bar u_2\bar v_1)^2= 
(\bar u_1 + \bar v_1)^2(\bar u_2+\bar v_2)^2=\Psi_{p_1}\Psi_{p_2}. 
\qedhere 
\end{equation*} 
\end{proof} 
 
As a consequence, under the conditions of Theorem~\ref{thm-main}
one obtains Theorem~\ref{thm-qt220}. 
 
\begin{proof}[Proof of Theorem{\rm~\ref{thm-qt220}}] 
The result follows from Proposition~\ref{prop-qt-iso} and the fact that
$$ 
\Mor_{\ZZ_2}(X_2,\CC^*)=\Hom_{\ZZ_2}(\Alb(X_2)^-,\CC^*)= 
\Mor_{\ZZ_2}((\CC^*)^d,\CC^*)=\Hom(\ZZ^d,\ZZ)=\ZZ^d, 
$$ 
see~\eqref{eq-alb}, where $d\leq d(\chi)$ and the equality holds if the character only has weight~2. 
\end{proof} 
 
\section{Examples} \label{sec-examples} We refer to~\cite{pisa} for explicit examples 
of cases of characters of depth greater than one and corresponding
orbifold pencils in the case of complements to plane reducible curves. 

In this section we will present two examples illustrating interesting phenomena about 2-torsion 
characters that have come up during the preparation of this paper and that might be of interest 
to the reader.

\subsection{Orbifold pencils of type $(2,2,2,2)$}
Note that Theorem~\ref{thm-main}\ref{thm-main-part2} refers only to weight two
characters of order two. Characters of order two and weight one might be associated 
with elliptic orbifold pencils of type $(2,2,2,2)$ as the following example seems 
to suggest. 

Consider the Hesse arrangement $\cH$ of lines, given by the twelve lines
$\{\ell_1,\dots,\ell_{12}\}$ joining the inflection points of a smooth plane cubic. It
is easy to check that the system of cubics sharing the nine inflection points is a pencil
with exactly four singular fibers. Each one of these fibers is a completely reducible
curve given by three lines in general position. The four cubics
$\cC_k:=\{\ell_{3k+1}\ell_{3k+2}\ell_{3k+3}=0\}$, $k=0,1,2,3$ belong to a pencil and their union 
gives the Hesse arrangement. After blowing up the base points of this pencil one obtains an elliptic 
fibration onto $\PP^1$ where $P_k\in \PP^1$ is the image of the special fiber $\cC_k$, $k=0,1,2,3$.
If one further blows up one of the three double points in each special fiber, one obtains a rational
surface~$\tilde \PP^2$, four exceptional vertical divisors $E_0,E_1,E_2,E_3$ (not sections) and twelve
strict transforms $\tilde \ell_i$, $i=1,\dots,12$. The surface
$X:=\tilde \PP^2\setminus \cup_{i=1}^{12} \tilde \ell_i$ together with the elliptic fibration induces
a well-defined orbifold map onto $\PP^1_{(2,P_0),(2,P_1),(2,P_2),(2,P_3)}$ since the preimage of $P_k$
in $X$ is given by $2E_k$ (in divisor notation). 

\subsection{Ceva Arrangements}
Note that the polynomials $f$ and $g$ in the group of quasitoric relations~\ref{eq-qt-rel} must 
satisfy $fg=F$ but such partition of $F$ might be different for different quasitoric relations as the
following example shows. Consider the following set of lines:
\begin{equation}
\array{c}
\ell_1:=x, \quad
\ell_2:=y, \quad
\ell_3:=z, \quad
\ell_4:=(y-z),\\
\ell_5:=(x-z), \quad
\ell_6:=(x-y), \quad
\ell_7:=(x-y-z).
\endarray
\end{equation}
The curve $D:=\left\{ \prod_{i=1}^7\ell_i=0 \right\}$ is a realization of the \SC\ arrangement $\Ceva(2,1)$ 
(cf.~\cite[Section~2.3.J, pg.~81]{geraden}) otherwise known as the non-Fano plane. In~\cite{pisa}, a computation
of the 2-torsion characters of $D$ is presented via orbifold pencils. In particular, consider 
$G:=\pi_1(\PP^2\setminus D)$ (whose abelianization is 
$\frac{\ZZ\gamma_1 \oplus \dots \oplus \ZZ\gamma_7}{\gamma_1+\dots +\gamma_7}$). A character on $G$ can be 
represented by a septuple of complex numbers whose product is 1, the $i$-th coordinate representing the image 
of any meridian $\gamma_i$ around~$\ell_i$. The element $\chi=(1,-1,-1,1,-1,-1,1)$ represents hence a character
on~$G$. In fact, it is well known that its depth is two. Note that $F=\ell_2\ell_3\ell_5\ell_6$ whereas 
$H=\ell_1\ell_4\ell_7$ according to the notation introduced in section~\ref{sec-qt}. Note that 
\begin{equation}
\array{c}
\ell_2\ell_5-\ell_3\ell_6=\ell_1\ell_4,\\
\ell_2\ell_6-\ell_3\ell_5=\ell_4\ell_7
\endarray
\end{equation}
are quasitoric relations of type $(2,2,0)$ corresponding to the quintuples 
$q_1:=(\ell_2\ell_5,\ell_3\ell_6,\ell_1\ell_4,1,1)$ and 
$q_2:=(\ell_2\ell_6,\ell_3\ell_5,\ell_4\ell_7,1,1)$ respectively. It is not hard to show that $q_1$ and
$q_2$ are strongly independent and they generate~$\cQ_{(D,\chi)}$.

Finally, note that 
$$
\ell_2\ell_5\ell_8^2-\ell_3\ell_6\ell_9^2=\ell_1\ell_4\ell_7^2,
$$
where $\ell_8:=(y-z-x)$ and $\ell_9:=(z-x-y)$ is another quasitoric decomposition corresponding to 
$q_3=(\ell_2\ell_5,\ell_3\ell_6,\ell_1\ell_4\ell_7^2,\ell_8,\ell_9)\in \cQ_{(D,\chi)}$. 
One can check that in fact~$q_3=-q_1+2q_2$ (in additive notation).

\end{document}